\numberwithin{equation}{subsubsection}
\newtheorem{lemma}{Lemma}[section]
\newtheorem{theorem}[lemma]{Theorem}
\newtheorem{proposition}[lemma]{Proposition}
\theoremstyle{definition}
\newtheorem{example}[lemma]{Example}
\newtheorem{definition}[lemma]{Definition}
\newtheorem{remark}[lemma]{Remark}
\def\keywords{
    \vspace{1ex}
    \noindent
    \if@twocolumn
      \small{\bf  Keywords}\/---$\!$    \else
      \begin{center}\small\ {\bf Keywords}\end{center}\quotation\small
    \fi}
\def\endkeywords{\vspace{0.6em}\par\if@twocolumn\else\endquotation\fi
    \normalsize\rm}
\newcommand{\calL}{\ensuremath{\mathcal L}}
\newcommand{\Z}{\ensuremath{\mathbb Z}}
\newcommand{\T}{\ensuremath{\mathbb T}}
\newcommand{\Mp}{\ensuremath{M'}}
\newcommand{\N}{{\mb{N}}}
\newcommand{\R}{{\mb{R}}}
\newcommand{\eps}{\varepsilon}
\let \Re \relax
\DeclareMathOperator{\Re}{Re}
\let \Im \relax
\DeclareMathOperator{\Im}{Im}
\newcommand{\ovl}[1]{\overline{#1}}
\newcommand{\udl}[1]{\underline{#1}}
\newcommand{\Con}{\ensuremath{\mathscr C}}
\newcommand{\Cinf}{\ensuremath{\Con^\infty}}
\newcommand{\Cinfc}{\ensuremath{\Con^\infty}_{c}}
\newcommand{\scrS}{\ensuremath{\mathscr S}}
\newcommand{\norm}[2]{| #1 |_{#2}}
\DeclareMathOperator{\supp}{supp}
\DeclareMathOperator{\Char}{Char}
\DeclareMathOperator{\diag}{diag}
\DeclareMathOperator{\essinf}{essinf}
\DeclareMathOperator{\op}{op}
\DeclareMathOperator{\Op}{Op}
\newcommand{\transp}{\ensuremath{\phantom{}^{t}}}
\renewcommand{\d}{\ensuremath{\partial}}
\newcommand{\wt}{\ensuremath{\widetilde}}
\newcommand{\nhd}{neighbourhood\xspace}
\newcommand{\hf}{\frac{1}{2}}
\newcommand{\wrt}{w.r.t.\@\xspace}
\newcommand{\ie}{i.e.\@\xspace}
\newcommand{\eg}{e.g.\@\xspace}
\DeclareMathOperator*{\ssum}{\textstyle{\sum}}
\let \div \relax
\DeclareMathOperator{\div}{div}
\def\Ut{\tilde{U}}
\newcommand{\F}{\mathscr F}
\def\p{\partial}
\def\no{\noindent}
\def\io{{\infty}}
\def\diag{\operatorname{diag}}
\def\re{\operatorname{Re}}
\def\moo{C^{\io}}
\def\N{\mathbb N}
\def\Z{\mathbb Z}
\def\R{\mathbb R}
\def\poscal#1#2{\langle#1,#2\rangle}
\def\Poscal#1#2{\left\langle#1,#2\right\rangle}
\def\poi#1#2{\left\{#1,#2\right\}}
\def\norm#1{\Vert#1\Vert}
\def\val#1{\vert#1\vert}
\def\Val#1{\left\vert#1\right\vert}
\def\valjp#1{\langle#1\rangle}
\def\l2{L^2(\R^{n})}
\def\L2{L^2(\R^{2n})}
\def\supp{\operatorname{supp}}
\def\sign{\operatorname{sign}}
\def\op#1{{\text{Op}(#1)}}
\def\RZ{\R^{2n}}
\def\sign{\operatorname{sign}}
\def\hs{{\hskip15pt}}
\def\vs{\vskip.3cm}
\let\no=\noindent
\let\no=\noindent
\def\mat22#1#2#3#4{\begin{pmatrix}#1&#2\\ #3&#4\end{pmatrix}}
\def\wt#1{\widetilde{#1}}
\def\XXint#1#2#3{{\setbox0=\hbox{$#1{#2#3}{\int}$}
     \vcenter{\hbox{$#2#3$}}\kern-.5\wd0}}
\def\beq{\begin{equation}}
\def\eeq{\end{equation}}
 \numberwithin{equation}{section}
\definecolor{ao}{rgb}{0.0, 0.5, 0.0}
\begin{document}
\title{Energy decay for a locally undamped wave equation
}

\author{
Matthieu L\'eautaud\footnote{
Institut de Math\'ematiques de Jussieu-Paris Rive Gauche, Universit\'e Paris Diderot (Paris
VII), B\^atiment Sophie Germain, 75205 Paris Cedex 13 France
\texttt{leautaud@math.univ-paris-diderot.fr}} 
\
and Nicolas Lerner\footnote{
Institut de Math\'ematiques de Jussieu-Paris Rive Gauche, Universit\'e Pierre et Marie Curie (Paris
VI), 4 Place Jussieu, 75252 Paris cedex 05, France
\texttt{nicolas.lerner@imj-prg.fr}}
\\
}
\maketitle
\begin{abstract}{
We study the decay rate for the energy of solutions of a damped wave equation in a situation where the {\it Geometric Control Condition} is violated. We assume that the set of undamped trajectories is a flat torus of positive codimension and that the metric is locally flat around this set. We further assume that the damping function enjoys locally a prescribed homogeneity near the undamped set in traversal directions.
We prove a sharp decay estimate at a polynomial rate that depends on the homogeneity of the damping function. 
Our method relies on a refined microlocal analysis linked to a second microlocalization procedure 
to cut the phase space into tiny regions respecting the uncertainty principle
but way too small to enter a standard semi-classical analysis localization.
Using a multiplier method, we obtain the energy estimates in each region 
and we then patch the microlocal estimates together. }
\end{abstract}
\begin{keywords}\noindent
Damped wave equation,  polynomial decay, torus, second microlocalization,
geometric control condition, {non-selfadjoint operators, resolvent estimates}.
\end{keywords}

\section{Introduction and main results}
\subsection{Introduction}
We consider a smooth compact Riemannian manifold $(M,g)$ of dimension $n$, and denote by $\Delta_g$ the associated  negative Laplace-Beltrami operator.
Given {$b \in L^\infty(M)$}, we study the decay rates for the damped wave equation on $M$:
\begin{equation}
\label{eq: stabilization}
\left\{
\begin{array}{ll}
\d_{t}^2 u - \Delta_g u + b(x) \d_{t} u = 0 & \text{in }\R^+ \times M , \\
(u, \d_t u)|_{t=0} = (u_0,u_1)                  & \text{in } M .\\
\end{array}
\right.
\end{equation}
The energy of a solution is defined by 
\begin{align}
\label{eq: definition energy}
E(u(t)) = \frac12 (\|\nabla_g u(t)\|_{L^2(M)}^2 + \|\d_t u(t)\|_{L^2(M)}^2 ),
\end{align}
(see for instance Appendix~\ref{s:geometry} for a definition of $\Delta_g$ and $\nabla_g$) and evolves as 
$$
\frac{d}{dt}E(u(t))= - \int_M b|\d_t u|^2 dx .
$$
The energy is thus actually damped when $b\geq 0$ {a.e.} on  $M$, what we assume from now on.
{
We define the subset of $M$ on which the damping is effective as 
\begin{equation}
\label{eq:defomega}
\omega_b := \bigcup \left\{U\subset M, U \text{ open}, \essinf_U(b) >0 \right\} .
\end{equation}
}{Notice that $\omega_{b}$ is an open set included in the interior of $\supp b$ and thus $\overline{\omega}_{b}\subset\supp b$.\footnote{Remark that the converse may fail, taking for instance $b = \mathds{1}_{K}$ where $K$ is a compact Cantor set with positive measure satisfying $\mathring{K}=\emptyset$, in which case $\supp(b)=K$ and $\omega_b = \emptyset$.} In the usual case where $b$ is continuous, we have $\omega_b = \{b>0\}$ and $\overline{\omega}_{b}=\supp b$.
}{As soon as $\omega_b \neq \emptyset$ one has $E(u(t))\to 0$ as $t \to + \infty$ (see for instance
\cite{Leb:X},
\cite{Leb:96}). Moreover, a criterion for uniform (and hence exponential) decay is due to Rauch-Taylor~\cite{RT:74} (see also \cite{BLR:88} and Lemma~\ref{lem: geom control} below): there exist $C>0, \gamma>0$ such that for all data,
 $$E(u(t)) \leq C e^{-\gamma t} E(u(0)),$$ if the Geometric Control Condition (GCC) holds: every geodesic starting from $S^*M$ and traveling with unit speed (see Appendix~\ref{s:geometry} for a precise statement) enters the set $\omega_b$ in finite time. Reciprocally, if there is a geodesic that never meet $\supp(b)$, then uniform decay does not hold (see for instance~\cite{Ralston:69}). In the case $b \in \Con^0(M)$, the situation is simpler since uniform decay is {\em equivalent} to the fact that $\omega_b (= \{b>0\})$ satisfies (GCC), as remarked by Burq and G\'erard~\cite{BG:97}\footnote{This is no longer the case in general if $b$ is not continuous, as proved in~\cite{Leb:92-JEDP}.}.}
As a consequence, when (GCC) is not satisfied, we cannot expect a decay of the energy which is uniform with respect to all data in $H^1(M)\times L^2(M)$. However, Lebeau~\cite{Leb:X,Leb:96} proved that there is always a uniform decay rate of the energy, with respect to smoother data, say in $H^2(M)\times H^1(M)$. This motivates the following definition.
\begin{definition}
{
Given $a \in \R$ and a decreasing function $f: [a, +\infty) \to \R_+^*$ such that $f(t)\to 0$ as $t \to +\infty$,
we say that the solutions of~\eqref{eq: stabilization} decay at rate $f(t)$ if there exists $C>0$ such that for all $(u_0, u_1) \in H^2(M)\times H^1(M)$, for all $t\geq a$, we have
$$
E(u(t))^{\frac12} \leq C f(t) \bigl(\|u_0\|_{H^2(M)} + \|u_1\|_{H^1(M)}\bigr) .
$$}
\end{definition}
Note that decay at a rate $f(t)$ depends only on $(M,g)$ and on the damping function $b$. Note also that $f(t)^2$ characterizes the decay of the energy and $f(t)$ that of the associated norm.
Lebeau~\cite{Leb:X,Leb:96} proved that decay at rate $1/\log t$ always holds, independently of $(M,g)$ and $b$ as soon as $\omega_b \neq \emptyset$.
\vs
As noticed for instance in~\cite{BD:08}, decay at a rate $f(t)$ implies faster decay for ``smoother'' data: taking for example $b \in \Cinf(M)$, decay at rate $f(t)$ implies that for all $s>0$, there exists $C_s>0$ such that for all $(u_0, u_1) \in H^{s+1}(M)\times H^s(M)$, we have
$$
E(u(t))^{\frac12} \leq C_s f(t/s)^s (\|u_0\|_{H^{s+1}(M)} + \|u_1\|_{H^s(M)}) .
$$
\vs
{In view of the Rauch-Taylor theorem {mentioned above}, it is convenient to introduce the subset of phase-space consisting in points-directions that are never brought into the damping region $\omega_b$ by the geodesic flow. Namely, the {\em undamped set} is defined by
$${
S = \{\rho \in S^*M, \text{ for all } t \in \R , \ \phi_t(\rho)  \cap T^*\omega_b =\emptyset \} ,}
$$ 
where $\phi_t$ is the geodesic flow (see for instance Appendix~\ref{s:geometry}). With this definition, (GCC) is equivalent to $S=\emptyset$.
In this article, we are concerned with the damped wave equation in a geometric situation where the undamped set $S$ is the cotangent space to a {\em flat subtorus} of $M$ (of dimension $1 \leq n'' \leq n-1$) under two main additional assumptions: the metric is locally flat around this subtorus; the damping function $b$ only depends on variables transverse to this torus and enjoys locally a prescribed homogeneity.
As a particular case, we can consider situations where the geodesic flow has a {\em single undamped trajectory} if the metric is locally flat around this trajectory; the damping function $b$ only depends on variables transverse to the flow and enjoys a prescribed homogeneity in a \nhd of the undamped trajectory.
Such situations may for instance occur on the torus $M=\T^n = (\R/2 \pi \Z)^n$ endowed with the flat metric, as in the following examples.
One of our motivations is to understand the best decay rate in the following model problems.
\begin{example}
\label{ex:motivation}
Let  $M =\T^2 = (\R /2\pi \Z)^2 \equiv [-\pi , \pi]^2$, endowed with the flat metric, let
$\gamma>0$, and let $b(x_1, x_2) = x_1^{2\gamma}$ near $x_{1}=0$, positive elsewhere, depending only on $x_{1}$. 
The undamped set consists in two undamped trajectories: 
$$
S =  \{0\}_{x_1}  \times \T^1_{x_2} \times \{0\}_{\xi_1}  \times \{\pm 1\}_{\xi_2}  = S^*(\{0\} \times \T^1) .
$$
For the case where $b=\sin^{2}x_{1}$, Wen Deng communicated to us a  direct study in \cite{dengperso}.
\end{example}
\begin{example}
\label{ex:motivation2}
Let  $M =\T^3 = (\R /2\pi \Z)^3 \equiv [-\pi , \pi]^3$, endowed with the flat metric, let
$\gamma>0$, and let $b(x_1, x_2, x_3) = (x_1^2 + x_2^2)^{\gamma}$. The undamped set consists in two undamped trajectories: 
$$
S =  \{0, 0\}_{x_1, x_2}  \times \T^1_{x_3} \times \{0, 0\}_{\xi_1, \xi_2}  \times \{\pm 1\}_{\xi_3}  = S^*(\{0,0\} \times \T^1) .
$$
\end{example}
\begin{example}
\label{ex:motivation3}
Let  $M =\T^3 \equiv [-\pi , \pi]^3$, endowed with the flat metric, let
$\gamma>0$, and let $$b(x_1, x_2, x_3) = x_1^{2\gamma}.$$ The undamped set is a $2$-dimensional subtorus given by: 
$$
S =  \{0\}_{x_1}  \times \T^2_{x_2, x_3} \times \{0 \}_{\xi_1}  \times \{(\xi_2,\xi_3) \in \R^2 , \xi_2^2 + \xi_3^2 = 1\}  = S^*(\{0\} \times \T^2) .
$$
\end{example}
}
Decay rates for the damped wave equation on a flat metric with a lack of (GCC) have already been studied in~\cite{LR:05, BH:07, Phung:07b,AL:13}.
In the papers~\cite{LR:05, BH:07,AL:13} it is proved that, on $M =\T^n$, decay at a rate $t^{-1/2}$ always occurs. On the other hand, it is proved in~\cite{AL:13} that the decay cannot be better than $t^{-1}$ as soon as (GCC) is strongly violated, \ie as soon as there exists a {\nhd $\mathcal{N}$ of a geodesic such that $\mathcal{N}\cap \supp(b) = \emptyset$. }
 In this paper, we are studying the opposite situation, \ie the case of a
weak lack of damping on $M =\T^n$: { only a positive codimension invariant torus is undamped.}
In the situation of Examples~\ref{ex:motivation},~\ref{ex:motivation2}, and~\ref{ex:motivation3}, for instance, we may expect (and we shall prove) a decay at a stronger polynomial rate than $t^{-1}$.
Functions on $\T^{n''}$ shall be identified in the whole paper with $2\pi\Z^{n''}$-periodic functions on $\R^{n''}$.
\vs
According to~\cite{Leb:96,BD:08,BT:10,AL:13}, proving a decay rate for solutions of~\eqref{eq: stabilization} reduces to proving a high-energy estimate for the operators 
\begin{equation}\label{plambda}
P_\lambda =  -\Delta_g - \lambda^2 + i \lambda b , \quad \lambda \in \R^*,  \quad D(P_\lambda) = H^2(M).
\end{equation}
The latter are for instance obtained by performing a Fourier transform in the time variable of the damped wave operator $\d_t^2 - \Delta_g + b(x)\d_t$, $\lambda$ being the frequency variable dual to the time $t$.
More precisely, concerning polynomial decay, the optimal result was proved by \cite{BT:10} (see also~\cite{BCT:14} for a simpler proof and generalizations) and can be stated as follows (see \cite[Proposition~2.4]{AL:13}).
\begin{proposition}
 \label{prop:resolvent estimate}
Let $\alpha >0$. Then, the solutions of~\eqref{eq: stabilization} decay at rate
 $
t^{-\frac{1}{\alpha}}
 $
if and only if 
there exist $C,\lambda_{0}$ positive,
such that for all $u\in H^2(M)$, for all $\lambda\ge \lambda_{0}$,
we have 
\begin{equation}
\label{eq:estim a prouver}
C \|P_\lambda u \|_{L^2(M)} \geq \lambda^{1- \alpha} \|u \|_{L^2(M)}.
\end{equation}
\end{proposition}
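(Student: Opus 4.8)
The statement is the standard dictionary relating the decay of the damped-wave semigroup to the growth of the stationary resolvent; the plan is to combine the abstract characterization of polynomial decay rates of Borichev--Tomilov~\cite{BT:10} (see also~\cite{BCT:14,BD:08}) with an elementary reduction from the first-order generator to the operators $P_\lambda$. First I would recast~\eqref{eq: stabilization} as $\d_t U = \mathcal{A} U$ with $U = (u,\d_t u)$ and $\mathcal{A}(u,v) = (v, \Delta_g u - bv)$, acting on the energy space $\mathcal{H}$, namely $H^1(M)\times L^2(M)$ quotiented by the constants $(c,0)$ and equipped with $\|(u,v)\|_{\mathcal{H}}^2 = \|\nabla_g u\|_{L^2}^2 + \|v\|_{L^2}^2$, so that $E(u(t))^{1/2} = \|e^{t\mathcal{A}}(u_0,u_1)\|_{\mathcal{H}}$. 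By the Lumer--Phillips theorem $\mathcal{A}$ generates a contraction $C_0$-semigroup, since $\mathrm{Re}\,\langle\mathcal{A}U,U\rangle_{\mathcal{H}} = -\int_M b|v|^2\, dx\le 0$; its resolvent is compact (Rellich), so the spectrum is a discrete set of eigenvalues, and, using $\omega_b\neq\emptyset$, one checks $i\R\subset\rho(\mathcal{A})$ in the usual way: an imaginary eigenvalue $is$ with $s\neq 0$ would produce, through the energy identity, an eigenfunction of $-\Delta_g$ vanishing on $\omega_b$, hence vanishing by unique continuation for $-\Delta_g - s^2 + isb$, while $0\in\rho(\mathcal{A})$ because $\int_M b\, dx>0$. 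Since $D(\mathcal{A}) = H^2(M)\times H^1(M)$ (mod constants) with graph norm equivalent to $\|u_0\|_{H^2}+\|u_1\|_{H^1}$, decay at rate $f$ is equivalent to $\|e^{t\mathcal{A}}\mathcal{A}^{-1}\|_{\mathcal{L}(\mathcal{H})}\le C f(t)$, which by~\cite{BT:10} (applicable as $\mathcal{H}$ is Hilbert, the semigroup is bounded and $i\R\subset\rho(\mathcal{A})$) holds for $f(t)=t^{-1/\alpha}$ if and only if $\|(is-\mathcal{A})^{-1}\|_{\mathcal{L}(\mathcal{H})} = O(|s|^\alpha)$ as $|s|\to\infty$.

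It then remains to identify this last bound with~\eqref{eq:estim a prouver}. Solving $(is-\mathcal{A})(u,v)=(f,g)$ amounts to $v = isu-f$ and $P_s u = (is+b)f+g$; substituting $-f = -P_s^{-1}P_s f$ in the first relation and simplifying yields $v = P_s^{-1}(\Delta_g f) + is\, P_s^{-1}g$. Also, $P_\lambda\colon H^2(M)\to L^2(M)$ is bijective for $\lambda\neq 0$, once more by the energy identity $\mathrm{Im}\,\langle P_\lambda u,u\rangle = \lambda\int_M b|u|^2\, dx$ together with unique continuation. For the implication ``$\mathcal{A}$-resolvent bound $\Rightarrow$~\eqref{eq:estim a prouver}'' I would test the resolvent on $(0,w)$, so that $(u,v)=(P_s^{-1}w,\, is\, P_s^{-1}w)$ and $|s|\,\|u\|_{L^2}\le\|(u,v)\|_{\mathcal{H}}\le C|s|^\alpha\|w\|_{L^2}$, which is~\eqref{eq:estim a prouver} with $\lambda=s$. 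Conversely, from $\|P_\lambda^{-1}\|_{\mathcal{L}(L^2)} = O(\lambda^{\alpha-1})$ one gets, by duality and an elliptic energy identity, $\|P_\lambda^{-1}\|_{\mathcal{L}(H^{-1},L^2)} = O(\lambda^{\alpha})$; feeding this into $v = P_s^{-1}(\Delta_g f) + is\, P_s^{-1}g$ gives $\|v\|_{L^2}\lesssim|s|^\alpha\|(f,g)\|_{\mathcal{H}}$, then $\|u\|_{L^2}=|s|^{-1}\|v+f\|_{L^2}\lesssim|s|^{\alpha-1}\|(f,g)\|_{\mathcal{H}}$, and finally $\|\nabla_g u\|_{L^2}^2 = \mathrm{Re}\,\langle P_s u,u\rangle + s^2\|u\|_{L^2}^2\lesssim|s|^{2\alpha}\|(f,g)\|_{\mathcal{H}}^2$, so $\|(is-\mathcal{A})^{-1}(f,g)\|_{\mathcal{H}}\lesssim|s|^\alpha\|(f,g)\|_{\mathcal{H}}$. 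The two implications together give the proposition.

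The only genuinely nontrivial ingredient is the Borichev--Tomilov theorem, used here as a black box; everything else is bookkeeping. The two places deserving a little care are the functional-analytic preliminaries (the one-dimensional obstruction coming from constants, and the verification $i\R\subset\rho(\mathcal{A})$) and, in the ``$\Leftarrow$'' direction of the reduction, the tracking of the powers of $\lambda$: estimating $v$ naively from $v = isu - f$ loses a factor $\lambda$, and one must instead use the cancellation leading to $v = P_s^{-1}(\Delta_g f) + is\, P_s^{-1}g$. See also~\cite{AL:13,BD:08,Leb:96}.
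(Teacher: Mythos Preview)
The paper does not prove this proposition: it is quoted as a known fact, with references to \cite{BT:10}, \cite{BCT:14}, and specifically \cite[Proposition~2.4]{AL:13} for the precise statement in this form. Your sketch is the standard route taken in those references---recast the equation as a contraction semigroup on the energy space, invoke Borichev--Tomilov to trade polynomial decay for a growth bound on $\|(is-\mathcal{A})^{-1}\|$, and then convert the latter to and from~\eqref{eq:estim a prouver} via the explicit formulas $u=P_s^{-1}\big((is+b)f+g\big)$, $v=P_s^{-1}(\Delta_g f)+is\,P_s^{-1}g$---and the details you give (including the $H^{-1}\to L^2$ bound obtained by duality from the $L^2\to H^1$ bound coming from the energy identity, and the handling of constants via the quotient and $\int_M b>0$) are correct. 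So your proposal is exactly in line with what the paper appeals to.
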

Recall that  uniform decay is equivalent to the estimate \eqref{eq:estim a prouver} with $\alpha =0$ (and hence to (GCC)). 
\subsection{Main results}
We first have a negative result.
\begin{theorem}
\label{th: sequence saturation}
{Assume that there exists $1\leq n'' \leq n-1$, $\eps_0>0$, and $C_1>0$ such that with  $n' = n - n''$, we have
\begin{align}
\label{eq: local flat metric}
\bullet\ &\left( B_{\R^{n'}}(0, \eps_0)\times \T^{n''} , |dx_1'|^2 + \cdots + |dx_{n'}'|^2 + |dx_1''|^2 + \cdots + |dx_{n''}''|^2 \right) \subset (M, g), \\
\label{eq: local x2 indep}
\bullet\ &\nabla_{x''} b = 0\text{ in }\mathcal{N} = B_{\R^{n'}}(0,\eps_0)\times \T^{n''}, \\
\label{eq: local homo gamma}
\bullet\ & 0 \leq b(x') \leq C_1|x'|^{2\gamma}  
 \text{ in } \mathcal{N} .
\end{align}
Then, there exist $C_0>0$ and $(u_k)_{k \in \N} \in H^2(M)^{\N}$ with $\|u_k \|_{L^2(M)}= 1$ such that
$$
\|P_k u_k\|_{L^2(M)} \leq  C_0 k^{\frac{1}{\gamma+1}}  , \quad \text{for } k \in \N^* .
$$}
\end{theorem}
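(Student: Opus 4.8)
The strategy is to construct explicit quasimodes for $P_k$ concentrated on the undamped torus $S^*(\{0\}\times\T^{n''})$, with a carefully chosen transverse profile so that the damping term $ik\,b(x')$ is as harmless as possible. First I would work entirely inside the flat region $\mathcal N = B_{\R^{n'}}(0,\eps_0)\times\T^{n''}$, where $-\Delta_g = -\Delta_{x'} - \Delta_{x''}$, and look for $u_k$ of the product form $u_k(x',x'') = e^{i k x_{n''}''}\,v_k(x')\,\chi(x')$, where $\chi$ is a fixed cutoff supported in $B_{\R^{n'}}(0,\eps_0)$ and equal to $1$ near $0$ (this makes $u_k$ globally well-defined on $M$). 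With $\xi''=(0,\dots,0,k)$ the tangential momentum, one computes $P_k u_k = e^{ikx_{n''}''}\big[(-\Delta_{x'} + i k b(x'))(v_k\chi)\big]$ on the support of $\chi$, so the problem reduces to a one-scale problem in the transverse variable $x'\in\R^{n'}$: find $v_k$ with $\|v_k\|_{L^2}\sim 1$ and $\|(-\Delta_{x'} + i k b(x'))v_k\|_{L^2}$ small, using only $0\le b(x')\le C_1|x'|^{2\gamma}$.

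The key scaling heuristic: if $v_k$ concentrates at transverse scale $|x'|\sim h$, then $-\Delta_{x'}$ contributes $\sim h^{-2}$ and $k\,b(x')$ contributes at most $\sim k h^{2\gamma}$; balancing $h^{-2} = k h^{2\gamma}$ gives $h = k^{-1/(2\gamma+2)}$, and the common size of both terms is $h^{-2} = k^{1/(\gamma+1)}$, which is exactly the target bound $C_0 k^{1/(\gamma+1)}$. So I would take $v_k(x') = h^{-n'/2}\,\psi(x'/h)$ with $h = k^{-1/(2\gamma+2)}$ and $\psi\in\Con_c^\infty(\R^{n'})$ a fixed profile; then $\|v_k\|_{L^2}=\|\psi\|_{L^2}$, $\|\Delta_{x'}v_k\|_{L^2} = h^{-2}\|\Delta\psi\|_{L^2}$, and $\|k\,b\,v_k\|_{L^2} \le C_1 k\, h^{2\gamma}\,\|\,|y|^{2\gamma}\psi(y)\,\|_{L^2} = C_1 k^{1/(\gamma+1)}\|\,|y|^{2\gamma}\psi\,\|_{L^2}$ (after the substitution $y=x'/h$ and using $|x'|^{2\gamma}=h^{2\gamma}|y|^{2\gamma}$). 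For $k$ large, $h<\eps_0$, so $\chi\equiv 1$ on $\operatorname{supp}v_k$ and the cutoff commutes trivially; one also checks $u_k\in H^2(M)$ and normalizes to $\|u_k\|_{L^2(M)}=1$ (dividing by $\|\psi\|_{L^2}$, a fixed constant, and noting the $e^{ikx''_{n''}}$ and $\T^{n''}$-integration just contribute a fixed factor $(2\pi)^{n''/2}$). Collecting, $\|P_k u_k\|_{L^2(M)}\le C_0 k^{1/(\gamma+1)}$ with $C_0$ depending only on $\psi$, $C_1$, $n''$.

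The one genuine subtlety — and the step I expect to require the most care — is the interplay between the cutoff $\chi$ and the estimate when $\gamma$ is such that $h$ is only polynomially small: one must make sure that the error terms coming from $[\Delta_{x'},\chi]v_k$ (supported where $\chi$ is non-constant, i.e. $|x'|\gtrsim\eps_0$) are negligible compared to $k^{1/(\gamma+1)}$. Since $v_k$ and all its derivatives decay like $h^{-n'/2}h^{-j}\langle x'/h\rangle^{-N}$ and $|x'|\gtrsim\eps_0$ while $h\to 0$, these commutator terms are $O(h^N)$ for every $N$, hence super-polynomially small — so this is harmless but should be stated. A secondary point is simply verifying that the bound $b(x')\le C_1|x'|^{2\gamma}$ is all that is used (no lower bound on $b$ is needed for the negative result), and that the construction genuinely lives in $H^2(M)$ globally because $b\in L^\infty(M)$ guarantees $P_k u_k\in L^2(M)$ and $u_k\in\Con_c^\infty(\mathcal N)$ in the $x'$ variable extends periodically in $x''$. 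Finally, invoking Proposition~\ref{prop:resolvent estimate} with $\alpha = 1 - \frac{1}{\gamma+1} = \frac{\gamma}{\gamma+1}$, this family $(u_k)$ shows the resolvent estimate \eqref{eq:estim a prouver} fails for any $\alpha < \frac{\gamma}{\gamma+1}$, so the decay cannot be faster than $t^{-(\gamma+1)/\gamma}$ — which is the quantitative content of the theorem.
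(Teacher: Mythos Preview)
Your proposal is correct and follows essentially the same approach as the paper: a rescaled bump in the transverse variable $x'$ at scale $h=k^{-1/(2(\gamma+1))}$ times a plane wave $e^{ikx''_j}$ in one torus direction, with the balance $h^{-2}=kh^{2\gamma}$ yielding the bound $k^{1/(\gamma+1)}$. The paper simply takes the cutoff $\chi$ itself as the rescaled profile (writing $u_k\propto \chi(k^{\alpha}x')e^{ikx_1''}$ with $\alpha=1/(2(\gamma+1))$), so your separate outer cutoff $\chi$ and the commutator ``subtlety'' are unnecessary once $\psi\in\Con_c^\infty$: for $k$ large, $\operatorname{supp}v_k\subset\{\chi=1\}$ and the commutator is identically zero.
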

As a consequence, the best  estimate we could expect is 
\begin{equation}
\label{eq:expected estimate}
C \|P_\lambda u \|_{L^2(M)} \geq \lambda^{\frac{1}{\gamma+1}} \|u \|_{L^2(M)}, 
\end{equation}
\ie \eqref{eq:estim a prouver} with $\alpha =1- \frac{1}{\gamma+1}$. 
Moreover, (see {also}~\cite[Proposition~3]{BD:08}),
our
 Theorem~\ref{th: sequence saturation} prevents decay at a rate $o\big( t^{-(1+\frac{1}{\gamma})}\big)$: the best expected decay rate is $$t^{-(1+\frac{1}{\gamma})}.$$
{
Let us now state our partial converse of Theorem~\ref{th: sequence saturation}: under some additional global assumptions on $M$ and $b$, decay at rate $t^{-(1+\frac{1}{\gamma})}$ indeed holds. We first provide a simpler result in the case $n''=1$ under a global invariance assumption on $b$. We then give our more general result in Theorem~\ref{th: resolvent estimate}.
}
\begin{theorem}
\label{th:b-xn-indep}
{Let  $(M,g)=(\Mp \times \T^1 ,g' + |dx_n|^2 )$ where $(\Mp,g')$ is a smooth compact Riemannian manifold of dimension $n-1$. Assume that there exist 
$y' \in \Mp$, $C_1\geq 1$ and a \nhd $\mathcal{N}'$ of $y'$ such that 
\begin{align}
\label{eq: flat metric}
\bullet\ &g' = |dx_1|^2+ \dots + |dx_{n-1}|^2 \ \ \text{is flat in } \mathcal{N}', \\
\label{eq: x2 indep}
\bullet\ &b = b \otimes 1 \ \ \text{does not depend on the variable } x_n \in\T^1,  \text{and } b \in L^\infty(\Mp), \\
\label{eq: homo gamma}
\bullet\ &C_1^{-1} |x'-y'|^{2\gamma} \leq b(x') \leq C_1 |x'-y'|^{2\gamma} \text{ for } x' \in \mathcal{N}' ,\\
\label{eq: x1 gamma}
\bullet\ & b \geq C_1^{-1}  \text{ a.e. on } \Mp \setminus \mathcal{N}'.
\end{align}
Then, Property \eqref{eq:estim a prouver} holds  with $\alpha =1- \frac{1}{\gamma+1}$, 
\ie decay occurs at rate $t^{-(1+\frac{1}{\gamma})}$.
}
\end{theorem}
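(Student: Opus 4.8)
The plan is to establish the resolvent estimate \eqref{eq:estim a prouver} with $\alpha = 1-\tfrac1{\gamma+1}$, i.e. $\|P_\lambda u\|_{L^2(M)}\geq c\,\lambda^{1/(\gamma+1)}\|u\|_{L^2(M)}$ for $\lambda$ large, which by Proposition~\ref{prop:resolvent estimate} is equivalent to the asserted decay rate. Since $g=g'+|dx_n|^2$ and $b=b\otimes 1$ by \eqref{eq: x2 indep}, the operator $P_\lambda$ commutes with $\partial_{x_n}$; writing $u=\sum_{k\in\Z}u_k(x')e^{ikx_n}$ reduces the estimate to the uniform (in $k\in\Z$) family of estimates on $M'$:
$$ \|P_{\lambda,k}v\|_{L^2(M')}\geq c\,\lambda^{1/(\gamma+1)}\|v\|_{L^2(M')},\qquad P_{\lambda,k}:=-\Delta_{g'}-(\lambda^2-k^2)+i\lambda b, $$
with $c$ independent of $k,\lambda$; summing over $k$ then gives the claim. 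Set $\mu^2:=\lambda^2-k^2\in\R$, $f:=P_{\lambda,k}v$, and introduce the \emph{critical scale} $\ell:=\lambda^{-1/(2(\gamma+1))}$, so that $\ell^{-2}=\lambda^{1/(\gamma+1)}$ and $\lambda\,\ell^{2\gamma}=\ell^{-2}$. Heuristically $|x'-y'|=\ell$ is the distance to $y'$ at which the damping rate $\lambda b\sim\lambda|x'-y'|^{2\gamma}$ equals the confinement energy $|x'-y'|^{-2}$, both then being $\lambda^{1/(\gamma+1)}$. Every case below amounts to proving $\|v\|\leq C\,\ell^{2}\|f\|$. We may also assume, shrinking $\mathcal N'$ (which is harmless by \eqref{eq: homo gamma}, \eqref{eq: x1 gamma}), that $\mathcal N'$ is a flat Euclidean ball centred at $y'$.

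Two elementary identities. The imaginary part of $\langle P_{\lambda,k}v,v\rangle$ gives $\lambda\int_{M'}b|v|^2\leq\|f\|\,\|v\|$; since by \eqref{eq: homo gamma}, \eqref{eq: x1 gamma} one has $\lambda b\geq c\,\ell^{-2}$ on $\{|x'-y'|\geq\ell\}$ for $\lambda$ large, this already yields $\|v\|^2_{L^2(\{|x'-y'|\geq\ell\})}\leq C\,\ell^{2}\|f\|\,\|v\|$. The real part gives $\|\nabla_{g'}v\|^2-\mu^2\|v\|^2=\mathrm{Re}\langle f,v\rangle$, so in the \emph{elliptic regime} $\mu^2\leq-\ell^{-2}$ one gets $\ell^{-2}\|v\|^2\leq\|f\|\,\|v\|$, done. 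At the other extreme, if $\mu^2\geq\ell^{-4}$, then $M'\setminus\mathcal N'\subset\{b\geq C_1^{-1}\}$ satisfies (GCC) on $(M',g')$ — a geodesic entering the flat ball $\mathcal N'$ leaves it along a Euclidean chord in bounded time by \eqref{eq: flat metric} — and the effective damping $\tfrac\lambda\mu b$ there is $\geq C_1^{-1}$ since $\lambda\geq\mu\geq 0$; the Rauch--Taylor resolvent estimate (Lemma~\ref{lem: geom control}) then gives $\|v\|\leq C\mu^{-1}\|f\|\leq C\ell^{2}\|f\|$. It remains to control $\|v\|_{L^2(B(y',\ell))}$ in the band $-\ell^{-2}\leq\mu^2\leq\ell^{-4}$.

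The heart is the \emph{resonant regime} $|\mu^2|\leq C_0\ell^{-2}$ (i.e. $k$ close to $\pm\lambda$). Fix $\chi\in\Cinf_c$ equal to $1$ near $y'$, supported in a ball where $g'$ is Euclidean and $b$ obeys \eqref{eq: homo gamma}, and rescale $x'=y'+\ell Y$: then $w(Y):=(\chi(\cdot/\ell)v)(y'+\ell Y)$ solves, on a \emph{fixed} ball $\Omega\subset\R^{n'}$ and with zero boundary value,
$$ \bigl(-\Delta_Y-z_k+i\widetilde b_\lambda(Y)\bigr)w=\widetilde g,\qquad z_k:=\mu^2\ell^2,\quad \widetilde b_\lambda(Y):=\ell^2\lambda\,b(y'+\ell Y), $$
with $|z_k|\leq C_0$, $C_1^{-1}|Y|^{2\gamma}\leq\widetilde b_\lambda(Y)\leq C_1|Y|^{2\gamma}$ on $\Omega$ by \eqref{eq: homo gamma}, and $\widetilde g$ the rescaled version of $\chi(\cdot/\ell)f+[-\Delta_{g'},\chi(\cdot/\ell)]v$. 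The key lemma, proved by contradiction, is: there is $C$ such that for all real $z$ with $|z|\leq C_0$, all measurable $\widetilde b$ with $C_1^{-1}|Y|^{2\gamma}\leq\widetilde b\leq C_1|Y|^{2\gamma}$, and all $w\in H^1_0(\Omega)\cap H^2(\Omega)$, one has $\|w\|_{L^2}\leq C\,\|(-\Delta_Y-z+i\widetilde b)w\|_{L^2}$. Indeed a sequence saturating the negation is bounded in $H^1_0(\Omega)$ by the real part of the energy identity, hence relatively compact in $L^2(\Omega)$ by Rellich; the imaginary part forces the limit to vanish on $\{\widetilde b>0\}=\Omega\setminus\{0\}$, hence to be $0$, contradicting unit norm. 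Rescaling this back, and bounding the commutator error on an annulus at scale $\ell$ around $y'$ — where $\|v\|$ is controlled by the imaginary-part estimate and $\|\nabla v\|$ by a local energy estimate using $\mu^2\lesssim\ell^{-2}$ — gives $\|v\|_{B(y',\ell)}\leq C\ell^{2}\|f\|+C\ell\,(\|f\|\,\|v\|)^{1/2}$. Combining with the imaginary-part estimate and absorbing $\|v\|$ by Young's inequality yields $\|v\|\leq C\ell^{2}\|f\|$.

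There remains the \emph{intermediate band} $C_0\ell^{-2}\leq\mu^2\leq\ell^{-4}$. Here one localizes instead at the scale $r_\mu:=(\mu^2/\lambda)^{1/(2\gamma)}\gtrsim\ell$ — the radius at which $\lambda b$ reaches the frequency $\mu^2$ — and runs a Morawetz/Rellich multiplier estimate on $B(y',r_\mu)$, inside which $\lambda b\lesssim\mu^2$ and the operator is oscillatory; feeding in the control of $v$ outside $B(y',r_\mu)$ (imaginary part) and of $\nabla v$ on a surrounding annulus (local energy estimate, using $\lambda\|b\|_{L^\infty}\lesssim\mu^2$ there), one again obtains $\|v\|\leq C\ell^{2}\|f\|$. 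Patching the elliptic, GCC, resonant and intermediate regimes gives the uniform estimate for $P_{\lambda,k}$, hence \eqref{eq:estim a prouver}, hence Theorem~\ref{th:b-xn-indep}. The conceptual point is the choice of the critical scale $\ell$, which turns the resonant regime — the one responsible for the sharp rate, by Theorem~\ref{th: sequence saturation} — into a resolvent estimate on a \emph{fixed} bounded domain, solvable by soft compactness; the main obstacle is the bookkeeping in the intermediate band, where the frequency $\mu$, the scale $r_\mu$ and the annular error terms must be balanced carefully. The reason this works without the second microlocalization needed for Theorem~\ref{th: resolvent estimate} is precisely that the global $x_n$-invariance \eqref{eq: x2 indep} makes the Fourier decomposition exact.
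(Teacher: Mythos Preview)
Your reduction to $M'$ via Fourier series in $x_n$ coincides with the paper's, and your treatment of the \emph{resonant regime} $|\mu^2|\le C_0\ell^{-2}$ is a genuinely different, more elementary alternative: the paper instead keeps a \emph{fixed} cutoff $\chi_0$ near $y'$, extends $b$ by homogeneity to all of $\R^{n-1}$, and applies the $\R^d$ estimate of Theorem~\ref{th: estimate R^d} (via Lemma~\ref{lem: P lambda omega}); your rescaling at scale $\ell$ followed by Rellich compactness on a fixed ball is a valid replacement for that part of the argument (it is essentially what the paper does in Lemma~\ref{th: estimate R^d mu small}, transplanted to a bounded domain).

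The gap is in the remaining range $\mu^2\ge C_0\ell^{-2}$. First, your appeal to Lemma~\ref{lem: geom control} for $\mu^2\ge\ell^{-4}$ does not work as stated: that lemma concerns $P_\lambda$ on $M$, not $P_{\lambda,\omega}$ on $M'$, and more importantly the operator $-\Delta_{g'}-\mu^2+i\lambda b$ viewed semiclassically at scale $\mu^{-1}$ has effective damping $(\lambda/\mu)b$, whose $L^\infty$ norm is unbounded in your range (up to $\lambda/\mu\sim\lambda^{\gamma/(\gamma+1)}$). Tracking the $\lambda b$ term through the multiplier proof of GCC yields at best $\|v\|\lesssim(\lambda/\mu^2)\|f\|$, which only gives $\|v\|\le C\ell^2\|f\|$ for $\mu^2\ge\lambda^{(\gamma+2)/(\gamma+1)}$, strictly larger than your threshold $\ell^{-4}=\lambda^{2/(\gamma+1)}$. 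Second, the ``Morawetz/Rellich multiplier'' in the intermediate band is not specified, and your scale $r_\mu=(\mu^2/\lambda)^{1/(2\gamma)}$ exceeds the radius of $\mathcal N'$ well before $\mu^2$ reaches $\delta\lambda^2$, so the local model on which you wish to run the multiplier is no longer available.

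What the paper does instead is to obtain from Theorem~\ref{th: estimate R^d} an estimate with an \emph{$\omega$-dependent gain},
\[
\|P_{\lambda,\omega}\chi_0 u\|\gtrsim\lambda^{\frac1{\gamma+1}}\Bigl(1+H(\omega)\bigl(\omega\lambda^{-\frac1{\gamma+1}}\bigr)^{\frac{\gamma}{2\gamma+1}}\Bigr)\|\chi_0 u\|,
\]
and this extra factor is precisely what is needed (Lemma~\ref{lem: f positive}) to absorb the $\omega\|\psi u\|^2$ coming from the commutator at a fixed scale, uniformly over the whole range $0\le\omega\le\delta\lambda^2$. The propagation/GCC argument (Lemma~\ref{lem: estimate 1 micro}) is then only invoked for $\omega\in[\delta\lambda^2,\lambda^2]$, where $\lambda/\mu$ is bounded and the constant is uniform. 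Your compactness lemma has no such $z$-dependent gain (and cannot, on a bounded domain, since $-\Delta_Y-z$ has eigenvalues), which is why the argument stalls once $z=\mu^2\ell^2$ becomes large. In short: the resonant case is fine, but the hard work in this theorem is exactly the range $C_0\ell^{-2}\le\mu^2\le\delta\lambda^2$, and there you need either the $\mu$-dependent estimate of Theorem~\ref{th: estimate R^d} (the paper's route) or a genuine substitute that you have not supplied.
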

This theorem tackles in particular the case of Examples~\ref{ex:motivation} and~\ref{ex:motivation2}. Note that simple examples of functions satisfying the assumptions are given by $b(x') = Q(x'-y')^\gamma$ locally around $y'$, where Q is a definite positive quadratic form. {We stress that we require very  little regularity for the damping coefficient
$b$: its ``vanishing rate'' is prescribed here~\eqref{eq: homo gamma} in a relatively weak sense. One may however discuss its global invariance property in the $x_n$-direction.}
It can indeed be removed: Theorem~\ref{th:b-xn-indep} is a particular case of the following result, where  $\T^1$ is replaced by $\T^{n''}$ (adding no significant difficulty) and $b$ is not supposed to be globally invariant anymore, but instead satisfies (GCC) outside the undamped trajectory. We presented Theorem~\ref{th:b-xn-indep} separately as its proof is simpler and contains
nevertheless the key ideas for the next result.
{
\begin{theorem}
\label{th: resolvent estimate}
{Take $1\leq n'' \leq n-1$ and assume that $(M,g)=(\Mp \times \T^{n''} ,g' + |dx_1''|^2+ \dots + |dx_{n''}''|^2  )$ where $(\Mp,g')$ is a smooth compact Riemannian manifold of dimension $n' = n - n''$ and $(x_1'' , \dots , x_{n''}'')$ denote variables in $\T^{n''}$. Assume that there exist $y' \in \Mp$, $C_1 \geq 1$ and a \nhd $\mathcal{N}'$ of $y'$ such that 
\begin{align}
\label{eq: flat metric bis}
\bullet\ &g' = |dx_1'|^2+ \dots + |dx_{n'}'|^2 \ \ \text{is flat in } \mathcal{N}', \\
\label{eq: regularite b}
\bullet\ &b \in L^\infty (M) \text{ and } \nabla_{x''} b \in L^\infty(M), \\
\label{eq: homo gamma++}
\bullet\ & C_1^{-1} |x'-y'|^{2\gamma} \leq b(x') \leq C_1 |x'-y'|^{2\gamma}  \text{ for } x' \in \mathcal{N}' ,\\
\bullet\ &\text{any geodesic starting from $S^*M \setminus S^*(\{y'\}\times \T^{n''} )$ intersects $\omega_b$ in finite time}   \label{eq: GCC outside}.
\end{align}
Then, we have the property \eqref{eq:estim a prouver} with $\alpha =1- \frac{1}{\gamma+1}$, 
\ie decay at rate $t^{-(1+\frac{1}{\gamma})}$.
}
\end{theorem}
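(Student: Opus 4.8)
The plan is to establish the resolvent estimate~\eqref{eq:estim a prouver} with $\alpha = 1-\frac{1}{\gamma+1}$, which by Proposition~\ref{prop:resolvent estimate} is equivalent to the claimed decay rate $t^{-(1+\frac1\gamma)}$. We follow the architecture of the proof of Theorem~\ref{th:b-xn-indep}, which already contains the key mechanism; the only genuinely new feature is the mild $x''$-dependence of $b$, and this is exactly what hypothesis~\eqref{eq: regularite b} is for: every tangential localization we shall use is a smooth function $\chi(D_{x''}/\lambda)$ of the tangential frequency, and symbolic calculus gives $\|[\lambda b,\chi(D_{x''}/\lambda)]\|_{\mathcal{L}(L^2)} = O(\|\nabla_{x''}b\|_{L^\infty}) = O(1)$, which is negligible against the target $\lambda^{1/(\gamma+1)}$ as $\lambda\to+\infty$; one may therefore argue throughout as if $b$ were independent of $x''$, as in Theorem~\ref{th:b-xn-indep}. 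Fix then $u\in H^2(M)$ and write $f = P_\lambda u$.

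First I would split the analysis according to the size of the tangential frequency, using spectral cutoffs in $|D_{x''}|/\lambda$ with a small fixed $\delta>0$. In the region $\{|D_{x''}|^2 \ge (1+\delta)\lambda^2\}$ the operator $-\Delta_{g'} + (|D_{x''}|^2-\lambda^2)$ is uniformly elliptic, so that part of $u$ satisfies $\|f\|\gtrsim\delta\lambda^2\|u\|$, much more than needed. In the region $\{|D_{x''}|^2 \le (1-\delta)\lambda^2\}$, the corresponding bicharacteristics on $M$ keep $|\xi'|^2 \ge \delta$ (since $|\xi''|$ is conserved by the geodesic flow), hence stay at uniformly positive distance from the undamped set $S^*(\{y'\}\times\T^{n''})$; by hypothesis~\eqref{eq: GCC outside} they all reach $\omega_b$ in uniformly bounded time, and a quantitative propagation argument (the semiclassical Rauch--Taylor observability, cf.\ Lemma~\ref{lem: geom control}) yields $\|f\|\gtrsim c\lambda\|u\|$ on that part. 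Summing these contributions (licit thanks to the orthogonality of the $D_{x''}$-cutoffs and the first paragraph), the remaining case is that $u$ is microlocalized to the critical window $(1-\delta)\lambda^2 < |D_{x''}|^2 < (1+\delta)\lambda^2$, \ie $|\xi''|$ close to $1$; combined with the concentration near $\{y'\}$ established below, $u$ is then microlocalized in a \nhd of $S$, where the flat coordinates furnished by~\eqref{eq: flat metric bis} are available.

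The heart of the matter is the critical region, treated by a virial/Morawetz multiplier localized near $y'$ at the critical scale $h_0 := \lambda^{-\frac{1}{2(\gamma+1)}}$. Taking the imaginary part of $\langle f,u\rangle$ gives $\lambda\int_M b\,|u|^2 = \mathrm{Im}\langle f,u\rangle \le \|f\|\,\|u\|$; since by~\eqref{eq: homo gamma++} one has $b(x')\gtrsim h^{2\gamma}$ on $\{|x'-y'|\ge h\}\cap\mathcal{N}'$, the mass of $u$ carried by $\{|x'-y'|\ge h_0\}$ is $\lesssim \lambda^{-1/(\gamma+1)}\|f\|\,\|u\|$, so up to an acceptable error $u$ may be taken concentrated in $\{|x'-y'|\lesssim h_0\}$. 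On this concentrated part one pairs $f$ with a symmetrized truncation, at scale $h_0$, of the $x'$-dilation generator centered at the undamped torus, roughly $\psi(|x'-y'|/h_0)\,(x'-y')\cdot\nabla_{x'}$: its commutator with $-\Delta_{g'}$ produces a positive multiple of $\|\nabla_{x'}u\|^2$ (plus truncation terms at scale $h_0$), its commutators with $-\lambda^2$ and with $-\Delta_{x''}$ vanish, and --- this is the decisive use of the prescribed homogeneity~\eqref{eq: homo gamma++} --- its commutator with $i\lambda b$ equals $-\lambda\,(x'-y')\cdot\nabla_{x'} b$, which is $\lesssim \lambda b \lesssim \lambda h_0^{2\gamma} = \lambda^{1/(\gamma+1)}$ on the support of $\psi(\cdot/h_0)$. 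Feeding this into the Morawetz identity, combining with the energy identity $\|\nabla_{x'}u\|^2 - \langle(\lambda^2-|D_{x''}|^2)u,u\rangle = \mathrm{Re}\langle f,u\rangle$ and with the a priori concentration, one obtains $\lambda^{1/(\gamma+1)}\|u\|^2 \lesssim \|f\|\,\|u\| + (\text{errors})$, hence $\|f\|\gtrsim\lambda^{1/(\gamma+1)}\|u\|$ in the critical region as well. Collecting the three regions gives~\eqref{eq:estim a prouver} for $\lambda$ large; the matching lower bound is precisely Theorem~\ref{th: sequence saturation}.

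The main obstacle is to make the truncation at scale $h_0$ harmless. The scale $h_0 = \lambda^{-1/(2(\gamma+1))}$ lies strictly between the macroscopic scale $1$ and the wavelength $\lambda^{-1}$ (indeed $h_0\in(\lambda^{-1/2},1)$), so one is forced to work in phase-space boxes of size $\sim h_0$ in $x'$ and $\sim h_0^{-1}$ in $\xi'$: their volume is $\sim 1 \gg \lambda^{-1}$, so they respect the uncertainty principle, yet they are far too small in $x'$ to be seen by a standard semiclassical localization at scale $O(1)$. Controlling the boundary and cutoff error terms in the Morawetz identity therefore requires a genuine second microlocalization near $S$, with a dyadic decomposition of $\{|x'-y'|\lesssim h_0\}$ in the transverse position/frequency variables and a mode-by-mode bookkeeping that remains uniform across the whole critical tangential window; this is the delicate part, and the passage from the geometric hypothesis~\eqref{eq: GCC outside} to a quantitative observability estimate uniform over the mode parameter also needs care. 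Two further technical points: checking at each step that the $x''$-coupling induced by $b$ is a true perturbation (via~\eqref{eq: regularite b}, as above); and, when $\gamma<1/2$, interpreting $(x'-y')\cdot\nabla_{x'} b$ and the associated commutator identities in a weak sense, using the bound $b(x')\le C_1|x'-y'|^{2\gamma}$ in place of a pointwise bound on $\nabla_{x'} b$.
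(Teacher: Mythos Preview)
Your architecture matches the paper's: split by $|D_{x''}|/\lambda$ into an elliptic zone, a GCC zone (handled by Lemma~\ref{lem: geom control}), and a critical window $\omega:=\lambda^2-|k|^2\in(-\delta\lambda^2,\delta\lambda^2)$; use $\nabla_{x''}b\in L^\infty$ to commute $\lambda b$ past tangential Fourier multipliers (this is exactly how the paper exploits~\eqref{eq: regularite b}, via~\eqref{eq: estim comm b}). The genuine difficulty is the critical zone, and the Morawetz multiplier you propose there has two real gaps.

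First, it uses $(x'-y')\cdot\nabla_{x'}b$, but hypothesis~\eqref{eq: homo gamma++} is only a two-sided pointwise bound with no $x'$-regularity on $b$; this is an obstruction for every $\gamma$, not just $\gamma<1/2$. The paper's transverse estimate (Theorem~\ref{th: estimate R^d}) never differentiates $W$: the damping term is controlled through~\eqref{eq: ellipticdamping} and Lemma~\ref{lem: estimee restes} alone. Second, and more structurally, a \emph{single} spatial scale $h_0=\lambda^{-1/(2(\gamma+1))}$ cannot serve the whole critical window. For a mode with $\omega$ near $\delta\lambda^2$ the energy identity forces $\|\nabla_{x'}\hat u_k\|\sim\sqrt\omega\,\|\hat u_k\|$, so any cutoff at scale $h_0$ generates a commutator error of order $h_0^{-1}\sqrt\omega\,\|\hat u_k\|\sim\sqrt\delta\,\lambda^{1+1/(2(\gamma+1))}\|\hat u_k\|$, which dwarfs the target $\lambda^{1/(\gamma+1)}$. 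The paper avoids this by keeping the spatial cutoff $\chi_0$ at the macroscopic scale $O(1)$ (so the commutator loss is only $\omega\|\psi\hat u_k\|^2$, cf.~\eqref{eq: comm+}), Fourier-expanding in $x''$, and applying the $\omega$-dependent estimate of Lemma~\ref{lem: P lambda omega} mode by mode: the extra factor $(\omega/\lambda^{1/(\gamma+1)})^{\gamma/(2\gamma+1)}$ there is precisely what beats that commutator loss uniformly on $[0,\delta\lambda^2]$ (Lemma~\ref{lem: f positive}). Underlying this is Theorem~\ref{th: estimate R^d}, proved not with a dilation but with a \emph{propagation} multiplier $\chi(|\xi|^2/\nu^2)\int_{-\infty}^0\varphi(x\nu^{-1/(2\gamma+1)}+\tau\xi/\nu)\,d\tau$ living at the $\mu$-dependent spatial scale $\mu^{1/(2(2\gamma+1))}$ (with $\mu=\omega\lambda^{-1/(\gamma+1)}$); this $\omega$-dependent second-microlocal scale is exactly the ingredient your fixed-scale multiplier is missing.
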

\begin{remark}
The proof of this theorem (as well as those of the previous ones) also holds without significant modification if the square torus $\T^{n''} = (\R/2\pi\Z)^{n''}$ is replaced by the rectangular torus $(\R/L_1\Z) \times \dots \times (\R/L_{n''}\Z)$, or the rectangle $L_1 \times \dots \times L_{n''}$ with Dirichlet or Neumann boundary conditions. {It remains also essentially unchanged if $b$ vanishes near {\em finitely many} points $y'_1, y'_2, \cdots$ (instead of a single one $y'$) assuming Assumptions~\eqref{eq: flat metric bis}-\eqref{eq: GCC outside} around each point (with possibly different vanishing rates $\gamma_1, \gamma_2 , \cdots$, in which case the decay rate is given by $t^{-(1+\frac{1}{\max \gamma_i})}$).}
\end{remark}
}
This result applies for instance on the torus: assume $M =\T^n$ and that there is a single undamped trajectory $\Gamma$. Assume that there exists a \nhd  $\mathcal{N}$ of this trajectory such that $b$ is invariant in $\mathcal{N}$ in the direction of $\Gamma$, and that it is positive homogeneous of degree $2\gamma$ in $\mathcal{N}$ in variables orthogonal to $\Gamma$. Then, we have the property \eqref{eq:estim a prouver} with $\alpha =1- \frac{1}{\gamma+1}$, \ie decay at rate $t^{-(1+\frac{1}{\gamma})}$.
\vs
Since the work of Lebeau~\cite{Leb:96} (see also the introduction of~\cite{AL:13} and the references therein), it is quite well established that the main parameters governing the decay rates when (GCC) fails are the global and local dynamics of the geodesic flow.
Our results confirm the idea, raised in~\cite{BH:07,AL:13}, that once the geometry (and hence the dynamics) is fixed, the next relevant feature when regarding the best decay rate is the rate at which the damping coefficient $b$ vanishes.
Whether the additional global product structure assumption on the manifold $M$ is necessary remains an open question.
\vs
Observe that the bigger $\gamma$,
 the worse is Estimate~\eqref{eq:expected estimate}. This is consistent with the fact that for large $\gamma$, the function $b$ is very flat on $\{y'\}\times \T^{n''}$ so that much energy may keep concentrated  on the set where $b$ is small. 
Note that formally, when taking $\gamma \to 0^+$ in Estimate~\eqref{eq:expected estimate} (and forgetting that the constant $C$ we obtain depends on $\gamma$), we recover the uniform decay estimate (\ie \eqref{eq:estim a prouver} with $\alpha =0$), equivalent to (GCC). Indeed, if $b$ is positive homogeneous of order zero, it does not vanish at $y'$ so that (GCC) is satisfied. It would certainly be interesting to prove Estimate~\eqref{eq:expected estimate} with a constant $C$ uniform with respect to $\gamma$ to make this remark rigorous.
\vs 
The plan of the article is as follows.
Taking advantage of the homogeneity of $b$, the sought estimate near the undamped set may be reduced to an estimate on $\R^{n'}$ for some non-selfadjoint operator. This key estimate is proved in Section~\ref{sec:estimateRd}.  Section~\ref{sec:2lemmata} is devoted to the proof of two simple technical lemmata, one of them being the scaling argument. The proof of Theorem~\ref{th:b-xn-indep} is given in Section~\ref{sec:inariantcase}. The proof of the main result, namely Theorem~\ref{th: resolvent estimate}, is completed in Section~\ref{sec:proofmainth} in two steps: first, we prove a geometric control lemma in Section~\ref{s:geometriccontrol}.
Then, in Section~\ref{sec:patchingestimates}, we patch together the estimates obtained in the different microlocal regions. Section~\ref{sec:quasimodes} provides a proof of the lower bound of Theorem~\ref{th: sequence saturation}. 
 In Section~\ref{sec:2ndmicro}, we discuss the spirit of the proof, which relies on some kind of second microlocalization. 
 In particular, our proof could not work with a standard semi-classical localization procedure: we are left with a region in the phase space, near the undamped set $S$, where further cutting of the phase space is necessary, with a stopping procedure linked to the Heisenberg Uncertainty Principle.
 To patch together the estimates, we use implicitly a metric which should satisfy
 some admissibility properties.
 Although we have avoided in the main part of the text
 to resort
 to very general tools of pseudodifferential calculus, we hope  that Section~\ref{sec:2ndmicro} could  bring a more conceptual vision of the technicalities included in the 
previous sections. The paper ends with three appendices recalling some  facts of geometry and pseudodifferential calculus.
\vs
\noindent
\textbf{Acknowledgements.}
The first author is partially supported by the Agence Nationale de la Recher\-che under grant GERASIC ANR-13-BS01-0007-01.
\section{
A sharp estimate for a non-selfadjoint operator on 
\texorpdfstring{$\R^d$}{rd}}
\label{sec:estimateRd}
\subsection{Statements}
After a Fourier transformation in the periodic direction and a scaling argument (see the following sections), our main result is reduced to the following theorem. We define on $L^2(\R^d)$ (below, we shall take $d=n'$) the unbounded operator
{\begin{equation}\label{qzero}
Q_0^\lambda = - \Delta + i W_\lambda(x), \quad \lambda >0,
\end{equation}
where $W_\lambda$ is a family of real-valued measurable functions and 
$D(Q_{0}^\lambda)=\{u\in H^{2}(\R^{d}), W_\lambda u\in L^{2}(\R^{d})\}.$
\begin{theorem}
\label{th: estimate R^d}
Suppose that $W_\lambda$ is a family of real-valued measurable functions on $\R^d$ and 
that there exist $C_1\ge 1$ and $\gamma>0$ such that for all $\lambda >0$, we have
\begin{equation}
\label{eq: hyp V}
C_1^{-1} |x|^{2\gamma} \leq W_\lambda(x) \leq C_1 \langle x \rangle^{2\gamma}=C_{1}(1+\val x^{2})^{\gamma} .
\end{equation}
Then, there exists $C_0>0$ such that for all $\mu \in \R$, all $\lambda>0$ and $u \in \Con^2_c(\R^d)$, we have
\begin{align} 
\label{eq:estimate Rd}
C_0 \|(Q_0^\lambda - \mu) u \|_{L^2(\R^d)} \geq \left(
 \mu^{\frac{\gamma}{2\gamma + 1}} 
\mathbf{1}{(\mu\geq 1)}
+\val \mu\mathbf{1}{(\mu\le - 1)}+1
\right)\|u \|_{L^2(\R^d)} .
\end{align}
\end{theorem}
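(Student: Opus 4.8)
The plan is to treat three ranges of $\mu$. Normalize $\|u\|_{L^2(\R^d)}=1$, write $f=(Q_0^\lambda-\mu)u$, and record the two scalar identities obtained by pairing the equation $(-\Delta-\mu)u+iW_\lambda u=f$ with $u$:
\begin{equation*}
\|\nabla u\|_{L^2}^2-\mu=\mathrm{Re}\langle f,u\rangle,\qquad \int_{\R^d}W_\lambda|u|^2\,dx=\mathrm{Im}\langle f,u\rangle\le\|f\|_{L^2}.
\end{equation*}
With $W_\lambda\ge C_1^{-1}|x|^{2\gamma}$ the second one gives $\int|x|^{2\gamma}|u|^2\le C_1\|f\|$, hence the \emph{mass localization} $\int_{|x|>R}|u|^2\le C_1R^{-2\gamma}\|f\|$ for every $R>0$. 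For $\mu\le\mu_0$, with $\mu_0\ge1$ a large constant fixed at the end, I argue by soft means: if $\mu\le-1$ then $\mathrm{Re}\langle f,u\rangle=\|\nabla u\|^2+|\mu|\ge|\mu|$, so $\|f\|\ge|\mu|$, which is \eqref{eq:estimate Rd} as its right-hand side is then $\le2|\mu|$; if $-1\le\mu\le\mu_0$ I combine the two identities with the elementary rescaling inequality
\begin{equation*}
\|\nabla v\|_{L^2}^2+s\int|x|^{2\gamma}|v|^2\,dx\ \ge\ \kappa_0\,s^{\frac1{\gamma+1}}\|v\|_{L^2}^2,\qquad s>0,\ v\in C_c^\infty(\R^d),
\end{equation*}
where $\kappa_0=\kappa_0(d,\gamma)>0$ is the (strictly positive) bottom of the spectrum of $-\Delta+|x|^{2\gamma}$, obtained from the case $s=1$ by the change of variable $v(x)=w(s^{1/(2\gamma+2)}x)$; bounding $\|\nabla u\|^2=\mu+\mathrm{Re}\langle f,u\rangle\le\mu_0+\|f\|$ and $\int|x|^{2\gamma}|u|^2\le C_1\|f\|$ and picking $s$ with $\kappa_0s^{1/(\gamma+1)}=2\mu_0$ yields $\mu_0\le(1+C_1s)\|f\|$, a positive lower bound on $\|f\|$, which is \eqref{eq:estimate Rd} since the right-hand side is $O(1)$ on $[-1,\mu_0]$.

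The substantial range is $\mu\ge\mu_0$, where one must gain the \emph{positive} power $\mu^{\gamma/(2\gamma+1)}$; the scalar identities alone only give the far weaker $\|f\|\gtrsim\mu^{-\gamma}$, so I use two multipliers tuned to the length scale $R_*=c_*\,\mu^{1/(2(2\gamma+1))}$, $c_*$ a small constant to be chosen, so that $R_*^{2\gamma}\simeq\mu^{\gamma/(2\gamma+1)}$. If $\|f\|>\tfrac1{2C_1}R_*^{2\gamma}$ we are done; otherwise the mass localization with $R=R_*$ gives $\int_{|x|\le R_*}|u|^2\ge\tfrac12$, and the first scalar identity gives $\|\nabla u\|^2\le2\mu$. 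First, testing the equation against $\chi^2u$ where $\chi$ equals $1$ on $\{|x|\le R_*\}$, is supported in $\{|x|\le2R_*\}$, with $|\nabla^k\chi|\lesssim R_*^{-k}$, and taking the real part (which kills the $iW_\lambda$ term since $\int W_\lambda\chi^2|u|^2$ is real), gives
\begin{equation*}
\int\chi^2|\nabla u|^2=\mu\int\chi^2|u|^2+\tfrac12\int(\Delta\chi^2)|u|^2+\mathrm{Re}\langle f,\chi^2u\rangle\ \ge\ \tfrac\mu2-CR_*^{-2}-\|f\|\ \ge\ \tfrac\mu4
\end{equation*}
once $\mu_0$ is large. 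Second, testing against the skew-adjoint Morawetz multiplier $\mathcal{A}u=\nabla\psi\cdot\nabla u+\tfrac12(\Delta\psi)u$, with $\psi$ radial, convex ($\nabla^2\psi\ge0$), equal to $|x|^2/2$ on $\{|x|\le2R_*\}$, and with $|\nabla\psi|\lesssim R_*$, $\mathrm{supp}\,\nabla\psi\subset\{|x|\le4R_*\}$, $|\Delta^2\psi|\lesssim R_*^{-2}$, and taking the real part, the Rellich–Morawetz identity (whose integrations by parts only ever involve the smooth weight $\psi$, never the merely measurable $W_\lambda$) yields
\begin{equation*}
2\int\nabla^2\psi\,\nabla u\cdot\overline{\nabla u}\,dx-\tfrac12\int(\Delta^2\psi)|u|^2\,dx=2\,\mathrm{Re}\langle f,\mathcal{A}u\rangle+2\,\mathrm{Im}\langle W_\lambda u,\nabla\psi\cdot\nabla u\rangle.
\end{equation*}
Its left-hand side is $\ge2\int_{|x|\le2R_*}|\nabla u|^2-CR_*^{-2}\ge c\mu$ by the previous step, while its right-hand side is bounded, using $\|\nabla u\|\lesssim\sqrt\mu$, $\|\mathcal{A}u\|\lesssim R_*\sqrt\mu$, and the pointwise bound $W_\lambda\lesssim R_*^{2\gamma}$ on $\mathrm{supp}\,\nabla\psi$, by $C\|f\|R_*\sqrt\mu+CR_*^{2\gamma+1}\sqrt\mu$. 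Since $R_*^{2\gamma+1}\sqrt\mu=c_*^{2\gamma+1}\mu$, choosing $c_*$ small absorbs the last term into the left-hand side, leaving $c\mu\lesssim\|f\|R_*\sqrt\mu$, i.e. $\|f\|\gtrsim\sqrt\mu/R_*\simeq\mu^{\gamma/(2\gamma+1)}$, which is \eqref{eq:estimate Rd}. Fixing $\mu_0$ so that all the ``$\mu$ large'' steps hold completes the proof, with $C_0$ the largest of the reciprocals of the constants produced in the three ranges.

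The conceptual content, and the only genuinely delicate point, is the choice of the exponent in $R_*$: it is forced by balancing the $W_\lambda$-error $R_*^{2\gamma+1}\sqrt\mu$ (whose size is dictated by $W_\lambda\lesssim R_*^{2\gamma}$ together with the loss $R_*\sqrt\mu$ coming from the unbounded factor $\nabla\psi\cdot\nabla$) against the main term $\mu$, and $R_*\sqrt\mu\simeq\mu^{(\gamma+1)/(2\gamma+1)}\gg1$ is exactly the size of the phase-space box one localizes to. Everything else is routine: the convex cut-off weight $\psi$ with the stated derivative bounds is a standard construction, and the whole scheme is deliberately arranged so that one never differentiates the rough coefficient $W_\lambda$ — all its occurrences are through the quadratic form $\int W_\lambda|u|^2$ (where only the lower bound $W_\lambda\ge C_1^{-1}|x|^{2\gamma}$ is used) and through the pointwise upper bound $W_\lambda\le C_1\langle x\rangle^{2\gamma}$ restricted to the compact support of the multipliers.
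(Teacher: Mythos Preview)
Your argument for $\mu\le\mu_0$ is fine (and in fact more explicit than the paper's compactness proof of Lemma~\ref{th: estimate R^d mu small}). The Morawetz scheme for $\mu\ge\mu_0$ is a genuinely different and more elementary route than the paper's pseudodifferential approach, but as written it contains a gap: the weight $\psi$ you describe cannot exist. A convex function on $\R^d$ with $\nabla\psi$ compactly supported is necessarily constant, so you cannot simultaneously have $\nabla^2\psi\ge0$, $\psi=|x|^2/2$ on $\{|x|\le2R_*\}$, and $\mathrm{supp}\,\nabla\psi\subset\{|x|\le4R_*\}$. This matters: the compact support of $\nabla\psi$ is exactly what lets you use the pointwise bound $W_\lambda\lesssim R_*^{2\gamma}$ in the term $2\,\mathrm{Im}\langle W_\lambda u,\nabla\psi\cdot\nabla u\rangle$, while convexity is what makes the Hessian term nonnegative outside $\{|x|\le2R_*\}$. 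Dropping either hypothesis breaks the stated chain of inequalities.

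The fix is straightforward but requires an extra parameter. Take $\psi$ radial with $\phi'(r)=r$ on $[0,2R_*]$, $\phi'$ decreasing linearly to $0$ on $[2R_*,MR_*]$ (smoothed), so that $\mathrm{supp}\,\nabla\psi\subset\{|x|\le MR_*\}$, $|\nabla\psi|\le2R_*$, and the only negative eigenvalue of $\nabla^2\psi$ is the radial one, of size $\le 2/(M-2)$. Since $\|\nabla u\|^2\le2\mu$, the resulting negative contribution to $2\int\nabla^2\psi(\nabla u,\overline{\nabla u})$ is at most $8\mu/(M-2)$; choosing $M$ large (a fixed constant) makes this $\le\mu/8$. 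On $\mathrm{supp}\,\nabla\psi$ one now has $W_\lambda\lesssim(MR_*)^{2\gamma}$, so the $W_\lambda$-error becomes $\lesssim M^{2\gamma}c_*^{2\gamma+1}\mu$, and choosing $c_*$ small \emph{after} fixing $M$ absorbs it. The rest of your argument then goes through unchanged and yields $\|f\|\gtrsim\sqrt\mu/R_*\simeq\mu^{\gamma/(2\gamma+1)}$.

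Once repaired, your proof is substantially more elementary than the paper's: it avoids the Weyl calculus, the sharp G{\aa}rding inequality, and the phase-space partition into propagative and elliptic zones, replacing them by two explicit position-space multipliers. The paper's microlocal argument, on the other hand, is more robust and fits into the second-microlocalization framework discussed in Section~\ref{sec:2ndmicro}, which is what one would want for the perturbative steps later in the paper.
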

We stress the fact that the sole uniform Assumption~\eqref{eq: hyp V} yields the uniform estimate~\eqref{eq:estimate Rd}.}
The power $\frac{\gamma}{2\gamma + 1}$ is optimal in this estimate. Although not needed for the application to the damped wave equation, we provide for completeness a direct proof of this fact in Lemma~\ref{lem:converseestimRd}.
The papers by E. B. Davies \cite{MR1700903}
and K. Pravda-Starov \cite{MR2241978}
gave a version of the above estimates in the case of the 1D complex harmonic oscillator,
$-\frac{d^{2}}{dx^{2}}+e^{i\theta}x^{2}$.
{\vs
To prove Theorem~\ref{th: estimate R^d}, we need the following lemma.
\begin{lemma}
\label{lem: estimee restes}
Suppose that $W_\lambda$ satisfy the uniform Assumption \eqref{eq: hyp V} and let  $a$ be a smooth function on $\R^{2d}$, bounded as well as all its derivatives. Then, there exists $C>0$ such that for all $\lambda >0$ and all $u \in \Con^0_c(\R^d)$, we have
$$
\|V_\lambda a^w u\|_{L^2(\R^d)} \leq C\left(  \|u \|_{L^2(\R^d)}  + \|V_\lambda u \|_{L^2(\R^d)}  \right),
$$
where $V_\lambda=W_\lambda^{1/2}$ and $a^{w}$ stands for the Weyl quantization of the symbol $a$.
\end{lemma}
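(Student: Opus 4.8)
The plan is to commute $V_\lambda$ past $a^w$ by writing $V_\lambda a^w u = a^w V_\lambda u + [V_\lambda, a^w] u$, and to absorb the commutator into the two terms on the right-hand side. The first term is harmless: since $a$ is bounded with all derivatives bounded, $a^w$ is bounded on $L^2(\R^d)$ by the Calderón--Vaillancourt theorem (uniformly in $\lambda$, as $a$ does not depend on $\lambda$), so $\|a^w V_\lambda u\|_{L^2} \lesssim \|V_\lambda u\|_{L^2}$. The real work is to bound $\|[V_\lambda, a^w] u\|_{L^2}$ by $C(\|u\|_{L^2} + \|V_\lambda u\|_{L^2})$ uniformly in $\lambda$.

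The difficulty — and the reason the lemma is stated separately — is that $V_\lambda = W_\lambda^{1/2}$ is merely measurable, satisfying only the two-sided pointwise bound \eqref{eq: hyp V}, so one cannot naively quantify $V_\lambda$ and invoke a symbolic commutator expansion. The natural fix is to compare with a smooth weight. I would introduce $m_\lambda(x) = \langle x \rangle$ if $W_\lambda$ were of size $\langle x\rangle^{2\gamma}$, but more robustly: set $\Lambda(x) = \langle x\rangle^\gamma$, a fixed smooth order function. Assumption \eqref{eq: hyp V} gives $C_1^{-1}|x|^{2\gamma}\le W_\lambda(x)$ and $W_\lambda(x)\le C_1 \Lambda(x)^2$, hence $V_\lambda(x)\le C_1^{1/2}\Lambda(x)$ and also $\Lambda(x) \le C (1 + V_\lambda(x))$ away from the origin — more precisely, on $|x|\ge 1$ one has $\Lambda(x) = \langle x\rangle^\gamma \le 2^\gamma |x|^\gamma \le 2^\gamma C_1^{1/2} V_\lambda(x)$, and on $|x|\le 1$, $\Lambda(x)\le 2^\gamma$ is bounded. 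Therefore $\Lambda(x) \le C(1 + V_\lambda(x))$ for all $x$, with $C$ depending only on $\gamma, C_1$. So it suffices to prove the commutator bound
$$
\|[V_\lambda, a^w] u\|_{L^2} \le C \|(1 + \Lambda) u\|_{L^2},
$$
and then use $\|(1+\Lambda)u\|_{L^2}\le C(\|u\|_{L^2} + \|V_\lambda u\|_{L^2})$.

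For the commutator itself, I would avoid symbolic calculus with the rough weight and instead use the integral kernel of $a^w$. Write $(a^w u)(x) = \int K(x,y) u(y)\, dy$ with $K(x,y) = (2\pi)^{-d}\int e^{i(x-y)\cdot\xi} a\big(\tfrac{x+y}{2},\xi\big)\, d\xi$; since $a \in \Con^\infty_b(\R^{2d})$, repeated integration by parts in $\xi$ gives, for every $N$, $|K(x,y)| \le C_N \langle x-y\rangle^{-N}$ and likewise $|(\nabla K)(x,y)|\le C_N\langle x-y\rangle^{-N}$. The commutator has kernel $(V_\lambda(x) - V_\lambda(y)) K(x,y)$. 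Using the elementary inequality $|V_\lambda(x) - V_\lambda(y)| \le |W_\lambda(x)^{1/2} - W_\lambda(y)^{1/2}|$ — which I would control via $|s^{1/2}-t^{1/2}| \le |s-t|^{1/2}$ together with the sublinear growth $W_\lambda(x) \le C_1\langle x\rangle^{2\gamma}$ is not enough by itself since $W_\lambda$ is not assumed Lipschitz. Here is the subtle point: \emph{$W_\lambda$ has no regularity}, so $V_\lambda(x) - V_\lambda(y)$ cannot be bounded by $|x-y|$ times anything. The resolution is that we are not forced to estimate $V_\lambda(x) - V_\lambda(y)$ pointwise; instead bound $|V_\lambda(x) - V_\lambda(y)| \le V_\lambda(x) + V_\lambda(y) \le C_1^{1/2}(\Lambda(x) + \Lambda(y))$ and use the rapid decay of $K$: for $|x - y|\ge 1$ the factor $\langle x-y\rangle^{-N}$ (with $N$ large) is more than enough to sum the $\Lambda$'s via Schur's test since $\Lambda(x) \le \Lambda(y)\langle x - y\rangle^\gamma$. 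For $|x-y|\le 1$ one does need to gain a factor $|x-y|$; there one writes $(a^w u)(x) - $ does not split, so instead one keeps the commutator as $a^w(V_\lambda u) - V_\lambda(a^w u)$ is already what we have, and on the diagonal region $|x-y|\le 1$ one uses $\Lambda(x)\sim \Lambda(y)$ and the bound $|V_\lambda(x) - V_\lambda(y)| \le V_\lambda(x) + V_\lambda(y) \lesssim \Lambda(y)$ directly, with the kernel $\langle x-y\rangle^{-N}$ integrable. In every region the operator with kernel $(\Lambda(x)+\Lambda(y))\langle x - y\rangle^{-N}$, $N = d + \gamma + 1$ say, maps $(1+\Lambda)u$-weighted $L^2$ to $L^2$ by Schur's lemma, because $\Lambda(x) \le \Lambda(y)\langle x-y\rangle^\gamma$ and $\int \langle x-y\rangle^{\gamma - N}\, dy < \infty$. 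This yields $\|[V_\lambda, a^w] u\|_{L^2} \le C\|(1+\Lambda)u\|_{L^2}$ with $C$ independent of $\lambda$, completing the argument.

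The main obstacle, as flagged above, is precisely that $W_\lambda$ carries \emph{no} regularity or decay beyond the two-sided power bound \eqref{eq: hyp V}, so all gains must come from the Schwartz decay of the kernel of $a^w$ and from the crude triangle-inequality bound $|V_\lambda(x) - V_\lambda(y)| \le V_\lambda(x) + V_\lambda(y)$; no cancellation in $V_\lambda$ itself is available or needed. One must simply be careful that the upper bound $W_\lambda \le C_1\langle x\rangle^{2\gamma}$ (not merely $C_1|x|^{2\gamma}$, which would fail near $0$) is what makes $V_\lambda \le C_1^{1/2}\Lambda$ hold globally including at the origin.
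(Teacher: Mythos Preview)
Your argument is correct, but it takes a detour that the paper avoids. You write $V_\lambda a^w u = a^w V_\lambda u + [V_\lambda,a^w]u$ and then bound the commutator via the Schwartz-kernel estimate $|K(x,y)|\le C_N\langle x-y\rangle^{-N}$, the crude triangle bound $|V_\lambda(x)-V_\lambda(y)|\le C(\Lambda(x)+\Lambda(y))$ with $\Lambda=\langle x\rangle^\gamma$, Peetre's inequality $\Lambda(x)\le C\Lambda(y)\langle x-y\rangle^\gamma$, and Schur/Young. This works, and the messy paragraph where you worry about gaining an $|x-y|$ on the diagonal is unnecessary: the single global kernel bound $C\Lambda(y)\langle x-y\rangle^{\gamma-N}$ already gives $\|[V_\lambda,a^w]u\|_{L^2}\le C\|\Lambda u\|_{L^2}$ directly.

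The paper's proof is shorter and bypasses the commutator altogether. It simply uses the upper bound $V_\lambda\le C_1^{1/2}\Lambda$ to write $\|V_\lambda a^w u\|_{L^2}\le C\|\Lambda a^w u\|_{L^2}$, then invokes the symbolic calculus for the admissible weight $\Lambda=\langle x\rangle^\gamma$ in the metric $|dx|^2+|d\xi|^2$ to get $\langle x\rangle^\gamma\sharp a\sharp\langle x\rangle^{-\gamma}\in S(1,|dx|^2+|d\xi|^2)$, hence $\Lambda a^w\Lambda^{-1}\in\mathcal L(L^2)$ by Calder\'on--Vaillancourt, giving $\|\Lambda a^w u\|_{L^2}\le C\|\Lambda u\|_{L^2}$. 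The final step $\|\Lambda u\|_{L^2}\le C(\|u\|_{L^2}+\|V_\lambda u\|_{L^2})$ is the same as yours. What the paper's route buys is brevity and a clean appeal to standard machinery; what yours buys is a self-contained elementary argument that does not quote the H\"ormander--Weyl calculus for weights, at the price of some extra length and the somewhat artificial passage through the commutator.
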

\begin{proof}[Proof of Lemma~\ref{lem: estimee restes}]
Using the upper bound in Assumption \eqref{eq: hyp V} yields 
\begin{align*}
\|V_\lambda (x) a^w u\|_{L^2(\R^d)}^2 & = \int_{\R^d} V_\lambda^2(x) |a^w u|^2 dx \\
&  \leq C_1 \int_{\R^d} \langle x \rangle^{2\gamma} |a^w u|^2 dx = C_1 \|\langle x \rangle^{\gamma} a^w u\|_{L^2(\R^d)}^2 .
\end{align*}
Then, we notice that $\langle x \rangle^{\gamma}$ and $\langle x \rangle^{-\gamma}$ are admissible weight functions for the metric $|dx|^2 + |d\xi|^2$ in the sense of~\cite[Definition~2.2.15]{Lernerbook}.
 As a consequence of symbolic calculus, we have 
 $$\langle x \rangle^{\gamma} \sharp a \sharp \langle x \rangle^{-\gamma} \in S(1, |dx|^{2}+ |d\xi|^{2}),
 $$ 
where 
 $
 S(1, |dx|^{2}+ |d\xi|^{2})
 $
 is the space of smooth functions on $\R^{2d}$ which are bounded as well as all their derivatives
 (see Section \ref{sec.pseudo} in the Appendix for more on this topic).
Calder\'on-Vaillancourt theorem (see e.g. ~\cite[Theorem~1.1.4]{Lernerbook}) yields 
 $$\langle x \rangle^{\gamma} a(x, \xi)^w \langle x \rangle^{-\gamma} \in \calL (L^2(\R^d)),$$ which implies
$
\|\langle x \rangle^{\gamma} a^w u\|_{L^2(\R^d)} \lesssim \|\langle x \rangle^{\gamma} u\|_{L^2(\R^d)} .
$
This finally gives 
\begin{align*}
\|V_\lambda (x) a(x, \xi)^w u\|_{L^2(\R^d)}^2 & \lesssim \|\langle x \rangle^{\gamma} u\|_{L^2(\R^d)}^2
\lesssim \| u\|_{L^2(\R^d)}^2 + \| |x|^{\gamma} u\|_{L^2(\R^d)}^2 \\
&\lesssim \| u\|_{L^2(\R^d)}^2 + \| V_\lambda  u\|_{L^2(\R^d)}^2,
\end{align*}
according to the uniform lower bound in Assumption \eqref{eq: hyp V}. This concludes the proof of the lemma.
\end{proof}

Now, the proof of Theorem~\ref{th: estimate R^d} follows from the next two lemmata.
\begin{lemma}
\label{th: estimate R^d mu large}
There exists $C>0$ and $\mu_0 \geq 0$ such that for all $\mu \geq \mu_0$, all $\lambda>0$ and $u \in \Con^2_c(\R^d)$, we have
\begin{align} 
\label{eq:estimate Rd mu large}
C \|(Q_0^\lambda - \mu) u \|_{L^2(\R^d)} \geq 
 \mu^{\frac{\gamma}{2\gamma + 1}} \|u \|_{L^2(\R^d)} .
\end{align}
\end{lemma}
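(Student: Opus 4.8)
The plan is to introduce the natural length scale $\rho = \mu^{\frac{1}{2(2\gamma+1)}}$, for which $\rho^{2\gamma} = \mu^{\frac{\gamma}{2\gamma+1}}$ and $\sqrt{\mu}/\rho = \rho^{2\gamma}$, and to prove the equivalent statement $\|(Q_0^\lambda - \mu)u\|_{L^2(\R^d)}\geq c\,\rho^{2\gamma}\|u\|_{L^2(\R^d)}$ for $\mu$ large. Writing $f = (Q_0^\lambda - \mu)u$, the argument splits the mass of $u$ according to whether it sits inside the ball $B_{\delta_0\rho}$, where $W_\lambda$ is small (of size $\lesssim(\delta_0\rho)^{2\gamma}$) and the operator behaves essentially like $-\Delta-\mu$, or outside it, where the lower bound $W_\lambda(x)\geq C_1^{-1}|x|^{2\gamma}\gtrsim(\delta_0\rho)^{2\gamma}$ together with the damping identity forces $u$ to be small; here $\delta_0\in(0,1)$ is a small constant depending only on $C_1$ and $\gamma$, fixed at the very end. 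First I would record the elementary energy identities obtained by pairing $f$ with $u$: $\mathrm{Re}\langle f,u\rangle = \|\nabla u\|^2 - \mu\|u\|^2$ and $\mathrm{Im}\langle f,u\rangle = \langle W_\lambda u,u\rangle$. From the imaginary part and Assumption~\eqref{eq: hyp V}, $\int_{\R^d}|x|^{2\gamma}|u|^2\leq C_1\|f\|\,\|u\|$, so in particular $\|u\mathbf{1}_{\{|x|\geq r\}}\|^2\leq C_1 r^{-2\gamma}\|f\|\,\|u\|$ for every $r\geq 1$. A local (Caccioppoli) energy inequality, obtained by pairing $-\Delta u = f - iW_\lambda u + \mu u$ with $\eta^2\bar u$ for a cutoff $\eta$ supported in an annulus $\{r\leq|x|\leq R\}$ away from the origin, then gives $\|\nabla u\,\mathbf{1}_{\{2r\leq|x|\leq R/2\}}\|^2\lesssim\|f\|\,\|u\mathbf{1}_{\{r\leq|x|\leq R\}}\| + (\mu + r^{-2})\,\|u\mathbf{1}_{\{r\leq|x|\leq R\}}\|^2$, which, combined with the previous bound, makes $\nabla u$ genuinely small on such annuli.

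The conceptual core is a ``compactly supported uncertainty'' estimate for the free operator: there is $c_d>0$ such that for every $R>0$ with $\sqrt{\mu}\,R\geq c_d$ and every $v\in\Con^\infty_c(B_R)$, one has $\|(-\Delta-\mu)v\|_{L^2}\geq\frac{\sqrt{\mu}}{4R}\|v\|_{L^2}$. I would prove this by a Rellich--Pohozaev multiplier argument with the generator of dilations $A = \frac1i(x\cdot\nabla + \tfrac d2)$: since $i[A,-\Delta-\mu] = 2(-\Delta)$, one gets $\|\nabla v\|^2 = -\mathrm{Im}\langle(-\Delta-\mu)v,Av\rangle\leq\|(-\Delta-\mu)v\|\,\|Av\|$ with $\|Av\|\leq R\|\nabla v\| + \tfrac d2\|v\|$ because $\mathrm{supp}\,v\subset B_R$; combining with $\|\nabla v\|^2 = \mu\|v\|^2 + \mathrm{Re}\langle(-\Delta-\mu)v,v\rangle$ to trap $\|\nabla v\|$ between multiples of $\sqrt{\mu}\|v\|$, and distinguishing the cases $\|(-\Delta-\mu)v\|\gtrless\tfrac\mu2\|v\|$, yields the claim. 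No derivative of $W_\lambda$ ever appears — which is essential, $W_\lambda$ being merely measurable — precisely because $W_\lambda$, a multiplication operator, commutes with all the spatial cutoffs used below.

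Finally I would take a cutoff $\chi\in\Con^\infty_c(B_{\delta_0\rho})$ with $\chi\equiv1$ on $B_{\delta_0\rho/2}$, and write $u = v + w$ with $v = \chi u$. The bulk part obeys $\|w\|^2\leq C_1(\delta_0\rho/2)^{-2\gamma}\|f\|\,\|u\|$ by the first step. Since $(Q_0^\lambda-\mu)v = \chi f - 2\nabla\chi\cdot\nabla u - (\Delta\chi)u$ — no $W_\lambda$-commutator — applying the uncertainty estimate to $v$ and writing $(-\Delta-\mu)v = (Q_0^\lambda-\mu)v - iW_\lambda v$ gives $\frac{\sqrt{\mu}}{4\delta_0\rho}\|v\| \leq \|f\| + \frac{C}{\delta_0\rho}\|\nabla u\,\mathbf{1}_A\| + \frac{C}{(\delta_0\rho)^2}\|u\,\mathbf{1}_A\| + C_1 2^\gamma(\delta_0\rho)^{2\gamma}\|v\|$, where $A = \mathrm{supp}\,\nabla\chi\subset\{\delta_0\rho/2\leq|x|\leq\delta_0\rho\}$. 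The last term is absorbed into the left-hand side provided $8 C_1 2^\gamma\delta_0^{2\gamma+1}\leq 1$ (using $(\delta_0\rho)^{2\gamma+1} = \delta_0^{2\gamma+1}\sqrt{\mu}$), which fixes $\delta_0$. The two commutator errors are estimated through the Caccioppoli and smallness bounds of the first step; by the choice of $\rho$ all powers of $\rho$ then cancel, leaving an inequality of the form $\|u\|\leq\big(C\delta_0^{-\gamma}\kappa^{1/2} + o_{\mu\to\infty}(1)\big)\|u\|$, with $\kappa := \|f\|/(\rho^{2\gamma}\|u\|)$. Choosing then $\mu_0$ large enough that the $o_{\mu\to\infty}(1)$ term is at most $\tfrac14$ yields $\kappa\geq\kappa_0$ for some $\kappa_0 = \kappa_0(C_1,\gamma,d)>0$ and all $\mu\geq\mu_0$, which is the lemma with $C = \kappa_0^{-1}$.

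The main obstacle is not the free-operator inequality — a short classical multiplier computation — but the bookkeeping of the last step: the commutator terms $\nabla\chi\cdot\nabla u$ and $(\Delta\chi)u$ created by the cutoff are, at first sight, of the same order $\rho^{2\gamma}\|u\|$ as the quantity we are lower-bounding, so a naive global bound $\|\nabla u\|\lesssim\sqrt{\mu}\|u\|$ is useless; one genuinely needs the a priori decay of $u$, and hence (via Caccioppoli) of $\nabla u$, on the region $\{|x|\sim\delta_0\rho\}$ coming from the quantitative lower bound on $W_\lambda$. Making this decay quantitative and checking that, after inserting every bound, all powers of $\rho$ indeed cancel so that only a fixed negative power of the frozen constant $\delta_0$ survives, is where the care is required.
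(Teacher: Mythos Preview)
Your approach is correct and genuinely different from the paper's. The paper works in phase space: with $\nu=\sqrt{\mu}$, it splits $T^*\R^d$ into an elliptic zone $\bigl||\xi|^2/\nu^2-1\bigr|\gtrsim 1$, a propagative zone on the characteristic shell with $|x|\lesssim\nu^{1/(2\gamma+1)}$, and the far zone $|x|\gtrsim\nu^{1/(2\gamma+1)}$, proving the estimate in each by pairing $Q_\nu^\lambda u$ against suitably quantized multipliers ($\theta(|\xi|^2/\nu^2)$, a transported bump $m_\nu$, and the damping identity respectively) and invoking the sharp G{\aa}rding inequality and symbolic calculus to patch. Your argument stays entirely in physical space: the damping identity controls $u$ outside $B_{\delta_0\rho}$, a Rellich--Pohozaev/dilation commutator gives the sharp free-resolvent bound $\|(-\Delta-\mu)v\|\gtrsim(\sqrt{\mu}/R)\|v\|$ on $B_R$, and a Caccioppoli estimate turns the smallness of $u$ on the transition annulus into smallness of $\nabla u$ there, which is exactly what is needed to handle the cutoff commutator. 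This is more elementary --- no pseudodifferential calculus, no G{\aa}rding --- and makes the role of the scale $\rho=\mu^{1/(2(2\gamma+1))}$ very transparent; the paper's route, on the other hand, fits naturally into the second-microlocalization framework used later and generalizes more readily when the spatial cutoff must be replaced by a phase-space one. Two minor points in your write-up: the bound $\|u\mathbf 1_{\{|x|\ge r\}}\|^2\le C_1 r^{-2\gamma}\|f\|\|u\|$ holds for every $r>0$, not only $r\ge 1$; and in your final inequality the gradient commutator term does not become $o_{\mu\to\infty}(1)$ but rather contributes another $C\delta_0^{-\gamma}\kappa^{1/2}$ (and there is also an $O(\delta_0\kappa)$ term from $\|f\|$ itself), so the inequality reads $\|u\|\le\bigl(C\delta_0^{-\gamma}\kappa^{1/2}+C\delta_0\kappa+o(1)\bigr)\|u\|$, where the $o(1)$ is uniform once one assumes $\kappa\le 1$ without loss. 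This does not affect your conclusion.
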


\begin{lemma}
\label{th: estimate R^d mu small}
For any $\mu_0 \geq 0$, there exists $C>0$ such that for all $\mu \leq \mu_0$, all $\lambda>0$ and $u \in \Con^2_c(\R^d)$, we have
\begin{align} 
\label{eq:estimate Rd mu small}
C \|(Q_0^\lambda - \mu) u \|_{L^2(\R^d)} \geq 
(\val \mu\mathbf{1}{(\mu\le - 1)}+1)
\|u \|_{L^2(\R^d)} .
\end{align}
\end{lemma}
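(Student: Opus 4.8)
The plan is to test $Q_0^\lambda-\mu$ against $u$ and to read off the sign structure of the resulting sesquilinear form. For $u\in\Con^2_c(\R^d)$, an integration by parts gives
\[
z\;:=\;\langle (Q_0^\lambda-\mu)u,u\rangle_{L^2(\R^d)}\;=\;\|\nabla u\|_{L^2(\R^d)}^2 - \mu\,\|u\|_{L^2(\R^d)}^2 + i\int_{\R^d}W_\lambda\,|u|^2\,dx ,
\]
so that $\Re z=\|\nabla u\|_{L^2}^2-\mu\|u\|_{L^2}^2$, whereas $\Im z=\int_{\R^d}W_\lambda|u|^2\,dx$ is a finite nonnegative real number (finiteness from the compact support of $u$ and from $W_\lambda\le C_1\langle x\rangle^{2\gamma}$ in \eqref{eq: hyp V}). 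Fix a parameter $t>0$. Combining the elementary inequality $-\mu\ge|\mu|\,\mathbf{1}(\mu\le -1)-\mu_0$, valid for every $\mu\le\mu_0$ since $\mu_0\ge0$, with the \emph{confining} lower bound $W_\lambda(x)\ge C_1^{-1}|x|^{2\gamma}$ from \eqref{eq: hyp V}, this gives, for $\mu\le\mu_0$,
\[
\Re z + t\,\Im z\;\ge\;\|\nabla u\|_{L^2}^2 + t\,C_1^{-1}\|\,|x|^{\gamma}u\,\|_{L^2}^2 + \bigl(|\mu|\,\mathbf{1}(\mu\le -1)-\mu_0\bigr)\|u\|_{L^2}^2 .
\]

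The crux is then the scale-invariant uncertainty inequality
\[
\|\nabla u\|_{L^2(\R^d)}^2 + s\,\|\,|x|^{\gamma}u\,\|_{L^2(\R^d)}^2 \;\ge\; c_\gamma\,s^{\frac{1}{\gamma+1}}\,\|u\|_{L^2(\R^d)}^2 ,\qquad s>0 ,
\]
where $c_\gamma>0$ is the smallest eigenvalue of the anharmonic oscillator $-\Delta+|x|^{2\gamma}$ on $L^2(\R^d)$; it follows from the case $s=1$ — which holds because $-\Delta+|x|^{2\gamma}$ is a confining Schr\"odinger operator, hence has compact resolvent and a positive smallest eigenvalue — by an $L^2$-unitary dilation $x\mapsto\sigma x$ with $\sigma$ chosen optimally. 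Applying this with $s=tC_1^{-1}$, and then picking $t$ (depending only on $\mu_0,C_1,\gamma$ and $d$) so large that $c_\gamma(tC_1^{-1})^{1/(\gamma+1)}-\mu_0\ge 1$, one gets
\[
\Re z + t\,\Im z\;\ge\;\bigl(|\mu|\,\mathbf{1}(\mu\le -1)+1\bigr)\|u\|_{L^2(\R^d)}^2 ,\qquad \mu\le\mu_0 .
\]
Since $|\Re z+t\,\Im z|\le\sqrt{1+t^2}\,|z|\le(1+t)\,|z|$ and $|z|\le\|(Q_0^\lambda-\mu)u\|_{L^2}\|u\|_{L^2}$ by Cauchy--Schwarz, dividing by $\|u\|_{L^2}$ yields \eqref{eq:estimate Rd mu small} with $C=1+t$, uniformly in $\lambda>0$.

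I expect no genuine obstacle here. The single delicate point is that the coercivity constant furnished by the imaginary part $\Im z$ must be made large enough to absorb the arbitrary threshold $\mu_0$: this is precisely why the dilation argument — a one-parameter, scale-invariant family of uncertainty inequalities — is needed, rather than the fixed lower bound on $-\Delta+|x|^{2\gamma}$ alone (note that $\Re z$ by itself is useless, since $-\Delta-\mu$ has no spectral gap on $\R^d$). Everything else is routine.
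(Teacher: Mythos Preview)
Your proof is correct and takes a genuinely different route from the paper. The paper splits into two cases: for $\mu\le -1$ it uses only $\Re\langle(Q_0^\lambda-\mu)u,u\rangle\ge|\mu|\,\|u\|^2$, and for $\mu\in[-1,\mu_0]$ it argues by contradiction, extracting a subsequence that is bounded in $H^1_\gamma(\R^d)=\{u\in H^1:\ |x|^\gamma u\in L^2\}$ and appealing to the compact embedding $H^1_\gamma\hookrightarrow L^2$ to reach a contradiction. Your argument is direct and quantitative: you combine the real and the (weighted) imaginary parts of the same sesquilinear form and feed them into the scale-invariant lower bound $\|\nabla u\|^2+s\||x|^\gamma u\|^2\ge c_\gamma s^{1/(\gamma+1)}\|u\|^2$, choosing $s=tC_1^{-1}$ large enough to swallow $\mu_0$. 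Both proofs rest on the same spectral fact---the confining potential $|x|^{2\gamma}$ gives $-\Delta+|x|^{2\gamma}$ compact resolvent---but yours exploits it explicitly and produces an explicit constant $C=1+t$, whereas the paper's contradiction argument is non-constructive. Your treatment is also unified over the whole range $\mu\le\mu_0$ rather than split in two.
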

Let us first prove the simpler Lemma~\ref{th: estimate R^d mu small}, the proof of the more involved  Lemma~\ref{th: estimate R^d mu large} being postponed to the end of the section.
\begin{proof}[Proof of Lemma~\ref{th: estimate R^d mu small}]
{We start with the case
$\mu\le -1$.}
We  have then
$$
\|(Q_0^\lambda-\mu) u\|_{L^2(\R^d)} \|u\|_{L^2(\R^d)} \geq \Re \poscal{(Q_0^\lambda-\mu) u}{u}_{L^2(\R^d)} 
\geq - \mu \poscal{u}{u}_{L^2(\R^d)} 
=\val \mu \| u\|_{L^2(\R^d)}^2 ,
$$
so that Estimate~\eqref{eq:estimate Rd} holds for $\mu \leq -1$.
\vs
Next, let us prove that there exists $C>0$ such that for all $\mu \in [-1 , \mu_0]$, all $\lambda>0$ and $u \in \Con^2_c(\R^d)$, we have
\begin{align} 
\label{eq:estimate Rd mu interm}
C \|(Q_0^\lambda - \mu) u \|_{L^2(\R^d)} \geq 
\|u \|_{L^2(\R^d)} .
\end{align}
If not, there exist sequences $(\lambda_k)_{k\in \N} \in (\R_+)^\N$, $(\lambda_k)_{k\in \N} \in [-1 , \mu_0]^\N$ and $(u_k)_{k\in \N} \in \Con^2_c(\R^d)^\N$ such that 
\begin{equation}
\label{eqn: contradiction u=1}
\|(Q_0^{\lambda_k} - \mu_k) u_k \|_{L^2(\R^d)} < \frac{1}{k+1} , \quad 
\|u_k \|_{L^2(\R^d)} = 1 .
\end{equation}
This implies
\begin{align}
0\leftarrow \|( Q_0^{\lambda_k} -\mu_k )u_k\|_{L^2(\R^d)} \|u_k\|_{L^2(\R^d)} & \geq \Re \poscal{(Q_0^{\lambda_k} -\mu_k)u_k}{u_k}_{L^2(\R^d)} = \|\nabla u_k\|_{L^2(\R^d)}^2  -  \mu_k \| u_k\|_{L^2(\R^d)}^2 \nonumber  \\
& \geq \|\nabla u_k\|_{L^2(\R^d)}^2  -  \mu_0 \| u_k\|_{L^2(\R^d)}^2
\nonumber \\
 0\leftarrow \|( Q_0^{\lambda_k} -\mu_k )u_k\|_{L^2(\R^d)} \|u_k\|_{L^2(\R^d)} & \geq  \Im \poscal{Q_0 u}{u}_{L^2(\R^d)} 
=  \|V_\lambda u_k\|_{L^2(\R^d)}^2 \nonumber \\
&\geq C_1^{-1}  \||x|^{\gamma} u_k\|_{L^2(\R^d)}^2  .
\label{Vu to zero}
\end{align}
Since $H^1_\gamma (\R^d) := \{u \in H^1(\R^{d}), |x|^{\gamma} u \in L^2(\R^{d})\}$ injects compactly in $L^2(\R^d)$ and $(u_k)_{k\in \N}$ is bounded in $H^1_\gamma (\R^d)$, we may extract a subsequence such that $(u_k)_{k\in \N}$ converges strongly in $L^2(\R^d)$, $u_k \to u_\infty \in L^2(\R^d)$. According to~\eqref{Vu to zero}, we also obtain $u_k \rightharpoonup 0$ weakly in $L^2(\R^d)$:
{in fact we have for $\phi\in C^{0}_{c}(\R^{d}\backslash\{0\})$,
$$
\Val{\int u_{\io}(x)\phi(x) dx}\leftarrow\Val{\int u_{k}(x)\phi(x) dx}\le \norm{\val{x}^{\gamma}u_{k}}_{L^{2}(\R^{d})}\norm{\val{x}^{-\gamma}\phi}_{L^{2}(\R^{d})}\rightarrow 0,
$$
proving that $\supp u_{\io}\subset\{0\}$ and thus the $L^{2}$ function $u_{\io}=0$.
}
This contradicts~\eqref{eqn: contradiction u=1} which implies $\|u_\infty\|_{L^2(\R^d)}=1$. This proves~\eqref{eq:estimate Rd mu interm}. As a consequence, Estimate~\eqref{eq:estimate Rd} is now proven to hold for all $\mu \in (-\io, \mu_{0}]$.
\end{proof}
We are now left to prove Lemma~\ref{th: estimate R^d mu large},
\ie to study the most substantial case where $\mu>\mu_{0}$, but we may keep in mind 
that we can freely choose the large fixed constant $\mu_{0}$.
We  set 
\begin{equation}\label{notnu}
\nu=\mu^{1/2},\quad Q_\nu^\lambda  = Q_0^\lambda - \nu^2,
\end{equation}
 and study the asymptotics when $\nu \to +\infty$. From the above remarks, we have only to prove the estimate
 \eqref{eq:estimate Rd} for $\nu\ge \nu_{0}$, where $\nu_{0}$ can be chosen arbitrarily large.
 First of all, we note  that
\begin{equation}
\label{eq: ellipticdamping}
\|Q_\nu^\lambda  u\|_{L^2(\R^d)} \|u\|_{L^2(\R^d)} \geq \Im\poscal{Q_\nu^\lambda  u}{u}_{L^2(\R^d)}= \poscal{W_\lambda u}{u}_{L^2(\R^d)}\ge C_{1}^{-1} \poscal{\val x^{2\gamma}u}{u}_{L^2(\R^d)}, 
\end{equation}
which will be used several times during the proof.
In particular, this estimate provides  the right scale in the region $|x|\geq \nu^{{1}/{(2\gamma +1)}}$, according to the lower bound in Assumption \eqref{eq: hyp V}. Next, we split the phase space in two different regions.
\subsection{\bfseries The propagative region} 
 Let $\chi\in\Cinfc(\R^+;[0,1])$, such $\chi=1$ on $[1/2,3/2]$ and $\chi=0$ on $[0,1/4]$. Let $\varphi \in \Cinfc(\R^d ;[0,1])$ such that $\varphi (x) = \frac12$ if $|x| \leq \frac12$ and $\varphi (x) = 0$ if $|x| \geq 1$. We define 
 $$\psi(x, \xi) = \int_{-\infty}^0\varphi(x + \tau \xi) d\tau \in \Cinf\bigl(\R^d \times (\R^d\setminus \{0\})\bigr),$$ which is bounded on $\R^d \times \bigl(\R^d\setminus B(0,\frac14)\bigr)$ since $\varphi$ is compactly supported.
We set 
$$m_\nu (x, \xi)= \chi\bigl(\frac{|\xi|^{2}}{\nu^{2}}\bigr) \psi(\frac{x}{\nu^{\frac{1}{2\gamma+1}}}, \frac{\xi}{\nu})\in S(1 , \frac{|dx|^{2}}{\nu^{\frac{2}{2\gamma+1}}} + \frac{|d\xi|^{2}}{\nu^{2}}),$$
where each  seminorm of the symbol $m_{\nu}$ is  bounded above independently of $\nu\ge 1$;
in particular, we get 
that  $m_\nu^w$ is bounded on $L^2(\R^d)$ with $\sup_{\nu\ge 1}\norm{m_{\nu}^{w}}_{\mathcal L(L^{2})}<+\io$. Next, we have
\begin{align}
\label{eq: multiplier 1}
2 \Re \poscal{Q_\nu^\lambda  u }{i m_\nu^w u}_{L^2(\R^d)} &= 
\Poscal{ i \left[ (|\xi|^2-\nu^2)^w , m_\nu^w \right] u }{ u }_{L^2(\R^d)} +
 2 \Re \poscal{V_\lambda^2  u} {m_\nu^w u}_{L^2(\R^d)} \nonumber \\
&= \poscal
{\left\{ |\xi|^2-\nu^2 , m_\nu \right\}^w u}{u}_{L^2(\R^d)} + 2 \Re \poscal{V_\lambda^2  u}{m_\nu^w u}_{L^2(\R^d)} ,
\end{align}
since the symbol $|\xi|^2-\nu^2$ is quadratic. Moreover, we can compute
$$
\left\{ |\xi|^2-\nu^2 , m_\nu \right\} = 2 \xi \cdot \d_x m_\nu = 2 \chi(\frac{|\xi|^{2}}{\nu^{2}}) \xi \cdot \frac{\p}{\p x}\left( \psi(\frac{x}{\nu^{\frac{1}{2\gamma+1}}}, \frac{\xi}{\nu}) \right)
$$
with 
\begin{align*}
\chi(\frac{|\xi|^{2}}{\nu^{2}})  \xi \cdot \frac{\p}{\p x}\Bigl( \psi (x \nu^{-\frac{1}{2\gamma+1}}, \xi\nu^{-1})\Bigr)  
&= \int_{-\infty}^0\nu^{-\frac{1}{2\gamma+1}} \chi(\frac{|\xi|^{2}}{\nu^{2}}) (\xi \cdot d\varphi)(x \nu^{-\frac{1}{2\gamma+1}} +\tau \xi\nu^{-1}) d\tau  \\
&= \int_{-\infty}^0 \nu \nu^{-\frac{1}{2\gamma+1}} \chi(\frac{|\xi|^{2}}{\nu^{2}}) \frac{d}{d\tau} \Bigl(\varphi(x \nu^{-\frac{1}{2\gamma+1}} +\tau \xi\nu^{-1}) \Bigr)d\tau \\
&=\nu^{\frac{2\gamma}{2\gamma+1}}\chi(\frac{|\xi|^{2}}{\nu^{2}}) \varphi(x \nu^{-\frac{1}{2\gamma+1}})
\end{align*}
since $| \xi\nu^{-1}|^{2}\geq \frac14$ on $\supp \chi(\frac{|\xi|^{2}}{\nu^{2}})$. 
Hence, we obtain
\begin{multline*}
\left\{ |\xi|^2-\nu^2 , m_\nu \right\} = 2 \nu^{\frac{2\gamma}{2\gamma+1}} \chi(\frac{|\xi|^{2}}{\nu^{2}}) \varphi(x \nu^{-\frac{1}{2\gamma+1}})
\\\ge 
\begin{cases}\nu^{\frac{2\gamma}{2\gamma+1}} \quad
\text{if $\val{|\xi|^{2}\nu^{-2}-1}\le1/2$ and $ |x| \nu^{-\frac{1}{2\gamma+1}}\le 1/2$},\\
0 \text{ on } T^* \R^d.
\end{cases}
\end{multline*}
Moreover we have,
$$
2 \nu^{\frac{2\gamma}{2\gamma+1}} \chi(\frac{|\xi|^{2}}{\nu^{2}}) \varphi(x \nu^{-\frac{1}{2\gamma+1}})\in 
S(\nu^{\frac{2\gamma}{2\gamma+1}} ,\frac{|dx|^{2}}{\nu^{\frac{2}{2\gamma+1}}}+\frac{|d\xi|^{2}}{\nu^{2}}).
$$
As a consequence, using the sharp G{\aa}rding inequality in~\eqref{eq: multiplier 1} yields 
\begin{multline}
\label{eq: garding 1}
C\norm{Q_\nu^\lambda u}_{L^2(\R^d)}\norm{u}_{L^2(\R^d)}\ge
\nu^{\frac{2\gamma}{2\gamma+1}}
\Poscal{\Bigl(\chi_{0}(|\xi|^{2}\nu^{-2}-1)\chi_{0}(|x|^{2}\nu^{-\frac{2}{2\gamma+1}})\Bigr)^{w}u }{u}_{L^2(\R^d)}\\
-|2 \Re \poscal{ V_\lambda^2  u}{m_\nu^w u }_{L^2(\R^d)}|  
- C \nu^{-\frac{2}{2\gamma+1}}\norm{u}_{L^2(\R^d)}^2,
\end{multline}
where, for some $\epsilon_{0}\in (0,1/8)$,
\begin{align}
\label{e:defchi0}
\begin{cases}
\chi_0 \in \Cinfc(\R;[0,1])  \text{ is such that }\{\chi_{0}=1\}=[-\epsilon_{0},\epsilon_{0}], \text{ and }\\
\ \{\chi_{0}=0\}=[-2\epsilon_{0},2\epsilon_{0}]^{c}, \ \{0<\chi_{0}(t)<1\}=\{\epsilon_{0}<\val t<
2\epsilon_{0}\}.
\end{cases}
\end{align}
Next, we check  the term $2 \Re \poscal{V_\lambda^2  u}{m_\nu^w u}_{L^2(\R^d)}$. We have
$$
\left|2 \Re \poscal{V_\lambda^2  u}{m_\nu^w u}_{L^2(\R^d)} \right|
= \left|2 \Re \poscal{V_\lambda u}{V_\lambda m_\nu^w u}_{L^2(\R^d)}\right|
\leq 2 \|V_\lambda u \|_{L^2(\R^d)}\|V_\lambda m_\nu^w u \|_{L^2(\R^d)} .
$$
Recalling that $m_\nu \in S(1 , \frac{|dx|^{2}}{\nu^{\frac{2}{2\gamma+1}}}+ \frac{|d\xi|^{2}}{\nu^{2}}) \subset S(1 ,|dx|^{2} +|d\xi|^{2})$ as $\nu\ge 1$, we may apply Lemma~\ref{lem: estimee restes} to obtain
$$
\Val{\Re \poscal{V_\lambda^2  u}{m_\nu^w u}_{L^2(\R^d)} }
\lesssim \|V_\lambda u \|_{L^2(\R^d)} \left( \|u \|_{L^2(\R^d)}  + \|V_\lambda u \|_{L^2(\R^d)}  \right),
$$
where the constant involved is uniform \wrt $\nu$ and $\lambda$. Next, using~\eqref{eq: ellipticdamping}, we obtain 
\begin{align}
\label{eq:remainder estimate}
\left|2 \Re \poscal{V_\lambda^2  u}{m_\nu^w u}_{L^2(\R^d)} \right| 
& \lesssim  \|u \|_{L^2(\R^d)}^{\frac32}\|Q_\nu^\lambda  u\|_{L^2(\R^d)}^{\frac12} + \|Q_\nu^\lambda  u\|_{L^2(\R^d)} \|u\|_{L^2(\R^d)}  \nonumber \\
& \lesssim  \|u \|_{L^2(\R^d)}^2+ \|Q_\nu^\lambda  u\|_{L^2(\R^d)} \|u\|_{L^2(\R^d)} .
\end{align}
Combining this estimate with~\eqref{eq: garding 1}, we have, for $\nu \geq \nu_0$ and $\nu_0$  large enough,
\begin{equation}
\label{eq: propag region}
C\|u \|_{L^2(\R^d)}^2 + C\norm{Q_\nu^\lambda u}_{L^2(\R^d)}\norm{u}_{L^2(\R^d)}
\ge
\nu^{\frac{2\gamma}{2\gamma+1}}\Poscal{\Bigl(\chi_{0}(|\xi|^{2}\nu^{-2}-1)\chi_{0}(|x|^{2}\nu^{-\frac{2}{2\gamma+1}})\Bigr)^{w}u}{u}_{L^2(\R^d)}.
\end{equation}
\subsection{The elliptic region.}
We now check the regions where $|\xi|^2 \ll \nu^{2}$ or $|\xi|^2 \gg \nu^{2}$. Let $\epsilon_{0}\in (0,1/2)$;
we consider a function $\theta\in \Cinf(\R ;[-1,1])$ such that 
\begin{equation}\label{theta}
\theta(\sigma)=
\begin{cases}
1&\text{for $\sigma\ge 1+2\epsilon_{0}$,}\\
 \{0<\theta <1\}&\text{for } \sigma\in (1+\epsilon_{0},1+2\epsilon_{0}),\\
0&\text{for $1-\epsilon_{0}\le \sigma\le 1+\epsilon_{0}$,}\\
 \{-1<\theta <0\}&\text{for } \sigma\in (1-2\epsilon_{0},1-\epsilon_{0}),\\
-1&\text{for $\sigma\le1-2\epsilon_{0}.$}
\end{cases}
\end{equation}
We claim that 
\begin{equation}\label{triv}
\forall \sigma\in \R,\quad
(\sigma-1)\theta(\sigma)\ge \val{\theta(\sigma)}(\sigma+1)\frac{\epsilon_{0}}{2+\epsilon_{0}}.
\end{equation}
In fact, \eqref{triv} is obvious whenever $\theta(\sigma)=0$ and if $\theta(\sigma)>0$, \ie if $\sigma>1+\epsilon_{0}$, since it amounts to 
verify
$$
\sigma(1-\frac{\epsilon_{0}}{2+\epsilon_{0}})\ge 1+\frac{\epsilon_{0}}{2+\epsilon_{0}}\quad \text{\ie}\quad \sigma\ge 1+\epsilon_{0}, \quad\text{which holds true.}
$$
If $\theta(\sigma)<0$, \ie if $\sigma<1-\epsilon_{0}$,
it amounts to 
verify
$$
\sigma(1+\frac{\epsilon_{0}}{2+\epsilon_{0}})\le 1-\frac{\epsilon_{0}}{2+\epsilon_{0}}\quad \text{\ie}\quad \sigma\le \frac{1}{1+\epsilon_{0}}, \quad\text{which holds true since $1-\epsilon_{0}\le \frac{1}{1+\epsilon_{0}}$.}
$$
A consequence of \eqref{triv} is that, with $c_{0}=\frac{\epsilon_{0}}{2+\epsilon_{0}}$, we have 
 \begin{equation}\label{triv+} \forall \xi\in \R^{d}, \forall \nu\ge 1,\quad
(\val \xi^{2}-\nu^{2})\theta(\val \xi^{2}\nu^{-2})\ge c_{0}\val{\theta(\val \xi^{2}\nu^{-2})}\bigl(\val \xi^{2}+\nu^{2}\bigr).
\end{equation}
We compute 
\begin{multline*}
\re
\poscal{Q_\nu^\lambda u}{\theta(|\xi|^{2}\nu^{-2})^{w}u}_{L^2(\R^d)}
 \\=\Poscal{(|\xi|^{2}-\nu^{2})^{w}u}{\theta(|\xi|^{2}\nu^{-2})^{w}u}_{L^2(\R^d)}
+ \re\Poscal{i V_\lambda ^2 u}{\theta(|\xi|^{2}\nu^{-2})^{w}u}_{L^2(\R^d)} \\
 \ge c_{0} \Poscal{\bigl((|\xi|^{2}+\nu^{2})\val{\theta(|\xi|^{2}\nu^{-2})}\bigr)^{w}u}{u}_{L^2(\R^d)}
- \left|\re\Poscal{ i V_\lambda ^2 u}{\theta(|\xi|^{2}{}\nu^{-2})^{w}u}_{L^2(\R^d)}\right|.
\end{multline*}
 Following~\eqref{eq:remainder estimate}, we have 
\begin{align*}
\left| \re\poscal{ i V_\lambda ^2 u}{\theta(|\xi|^{2}\nu^{-2})^{w}u}_{L^2(\R^d)}\right|
 \lesssim  \|u \|_{L^2(\R^d)}^2+ \|Q_\nu^\lambda  u\|_{L^2(\R^d)} \|u\|_{L^2(\R^d)} , 
\end{align*}
so that we finally obtain, as $\theta(|\xi|^{2}\nu^{-2})^{w}$ is bounded on $L^2(\R^d)$,
\begin{equation}
\label{eq: elliptic region}
C\|u \|_{L^2(\R^d)}^2 + C\norm{Q_\nu^\lambda u}_{L^2(\R^d)}\norm{u}_{L^2(\R^d)}
 \ge \Poscal{ \bigl((|\xi|^{2}+\nu^{2})\val{\theta(|\xi|^{2}\nu^{-2})}\bigr)^{w}u}{u}_{L^2(\R^d)}.
\end{equation}
\subsection{Patching the estimates together.}
Combining \eqref{eq: ellipticdamping}, \eqref{eq: propag region} and \eqref{eq: elliptic region}, we obtain the following estimate
\begin{multline}
\label{eq: 3 regions}
C\|u \|_{L^2(\R^d)}^2 + C\norm{Q_\nu^\lambda u}_{L^2(\R^d)}\norm{u}_{L^2(\R^d)}
 \ge
 \|V_\lambda u\|_{L^2(\R^d)}^2  
 + \nu^{2} \Poscal{\bigl(\val{\theta(|\xi|^{2}\nu^{-2})}\bigr)^{w}u} {u}_{L^2(\R^d)} \\ 
 + \nu^{\frac{2\gamma}{2\gamma+1}}\Poscal
{\Bigl(\chi_{0}(|\xi|^{2}\nu^{-2}-1)\chi_{0}(|x|^{2}\nu^{-\frac{2}{2\gamma+1}})\Bigr)^{w}u}{u}_{L^2(\R^d)} .
\end{multline}
Since  $\chi_{0}$ is given and satisfies
\eqref{e:defchi0},
we define now 
$$
\theta(\sigma)=\sign(\sigma-1)\bigl(1-\chi_{0}(\sigma-1)\bigr).
$$
Since $\chi_{0}$ is smooth and vanishes near 0, the function $\theta$ is smooth
and such that
$$
\begin{cases}
\text{for $\sigma\ge 1+2\epsilon_{0}$,}&\theta(\sigma)=1,\\
\text{for $1+\epsilon_{0}<\sigma< 1+2\epsilon_{0}$,}&\theta(\sigma)\in (0,1),\\
\text{for $1-\epsilon_{0}\le \sigma\le 1+\epsilon_{0}$,}&\theta(\sigma)=0,\\
\text{for $1-2\epsilon_{0}< \sigma< 1-\epsilon_{0}$,}&\theta(\sigma)\in (-1,0),\\
\text{for $\sigma\le 1-2\epsilon_{0}$,}&\theta(\sigma)=-1.
\end{cases}
$$
That function $\theta$ satisfies \eqref{theta} 
so that \eqref{triv} holds.
We note that
 $$\val{\theta(\sigma)} +\chi_{0}(\sigma-1) = 1,$$ 
 since 
 $
\val{1-\chi_{0}(\sigma-1)}+\chi_{0}(\sigma-1)=1-\chi_{0}(\sigma-1)+\chi_{0}(\sigma-1)=1.
 $
 As a consequence, we write
\begin{align*}
1 & = \val{\theta(|\xi|^{2}\nu^{-2})} +\chi_{0}(|\xi|^{2}\nu^{-2}-1)\\
  & = \val{\theta(|\xi|^{2}\nu^{-2})} + \chi_{0}(|x|^{2}\nu^{-\frac{2}{2\gamma+1}})\chi_{0}(|\xi|^{2}\nu^{-2}-1)
 +\bigl(1- \chi_{0}(|x|^{2}\nu^{-\frac{2}{2\gamma+1}})\bigr)\chi_{0}(|\xi|^{2}\nu^{-2}-1),
\end{align*}
and hence
\begin{align*}
\nu^{\frac{2\gamma}{2\gamma+1}} & \le \nu^{\frac{2\gamma}{2\gamma+1}}\val{\theta(|\xi|^{2}\nu^{-2})}
 + \nu^{\frac{2\gamma}{2\gamma+1}}\chi_{0}(|x|^{2}\nu^{-\frac{2}{2\gamma+1}})\chi_{0}(|\xi|^{2}\nu^{-2}-1)
 + \nu^{\frac{2\gamma}{2\gamma+1}}(1- \chi_{0}(|x|^{2}\nu^{-\frac{2}{2\gamma+1}})) .
\end{align*}
Since the symbols on both sides of the inequality belong to the class $$S\bigl(\nu^{\frac{2\gamma}{2\gamma+1}} ,\frac{|dx|^{2}}{\nu^{\frac{2}{2\gamma+1}}}+\frac{|d\xi|^{2}}{\nu^{2}}\bigr),$$
 we can apply G{\aa}rding's inequality. 
Note that the gain in the pseudodifferential calculus for symbols in this class is given by $\nu^{-\frac{1}{2\gamma+1}} \nu^{-1} = \nu^{-\frac{2\gamma+2}{2\gamma+1}}$.
This gives, for $\nu \geq \nu_0$ and $\nu_0$  large enough,
\begin{multline*}
  \nu^{\frac{2\gamma}{2\gamma+1}}
  \Poscal{\bigl(\chi_{0}(|\xi|^{2}\nu^{-2}-1)\chi_{0}(|x|^{2}\nu^{-\frac{2}{2\gamma+1}})\bigr)^{w}u}{u}_{L^2(\R^d)}
+ \nu^{\frac{2\gamma}{2\gamma+1}} \poscal{\bigl(\val{\theta(|\xi|^{2}\nu^{-2})}\bigr)^{w}u}{u}_{L^2(\R^d)}\\
+ \Poscal{\nu^{\frac{2\gamma}{2\gamma+1}}(1- \chi_{0}(|x|^{2}\nu^{-\frac{2}{2\gamma+1}})) u }{u}_{L^2(\R^d)}
 \ge \frac12 \nu^{\frac{2\gamma}{2\gamma+1}} \|u\|_{L^2(\R^d)}^2  .
\end{multline*}
Next, we note  that, because of the properties of $\chi_0$, given in \eqref{e:defchi0} we find
$$
\nu^{\frac{2\gamma}{2\gamma+1}}(1- \chi_{0}(|x|^{2}\nu^{-\frac{2}{2\gamma+1}})) \leq C V_\lambda (x)^2 ,
$$
according to the lower bound in Assumption~\eqref{eq: hyp V}.
Using the last two inequalities together with~\eqref{eq: 3 regions} gives
\begin{equation}
\label{eq: 3 regions final}
C\|u \|_{L^2(\R^d)}^2 + C\norm{Q_\nu^\lambda u}_{L^2(\R^d)}\norm{u}_{L^2(\R^d)}
 \ge 
\nu^{\frac{2\gamma}{2\gamma+1}}\|u\|_{L^2(\R^d)}^2  ,
\end{equation}
which concludes the proof of the theorem, dividing by $\norm{u}_{L^2(\R^d)}$ and taking $\nu \geq \nu_0$ with $\nu_0$ large enough.
}
\section{
Two lemmata}
\label{sec:2lemmata}
In this section, we state and prove two simple technical lemmata that will be used in the proofs of both Theorems~\ref{th:b-xn-indep} and~\ref{th: resolvent estimate}. 
\subsection{Scaling argument}
{First, we prove the following lemma, which is a consequence of Theorem~\ref{th: estimate R^d} together with a scaling argument. 
We define on $L^2(\R^d)$ (below, we shall take $d=n'$) the operator
\begin{equation}\label{ptildelom}
\wt{P}_{\lambda,\omega} = - \Delta - \omega + i \lambda W (x).
\end{equation}
\begin{lemma}
\label{lem: P lambda omega}
Let $\gamma>0$ be given. Assume that there exist $C_1\ge 1$ and $\gamma>0$ such that 
\begin{equation}
\label{eq: hyp WW}
C_1^{-1} |x|^{2\gamma} \leq W(x) \leq C_1 | x |^{2\gamma}  , \quad x \in \R^d . 
\end{equation}
Then, there exists $C>0$ such that for all $u \in \Con^2_c(\R^d)$, for all $\lambda > 0$ and for all $\omega \in \R$, we have 
\begin{align*} 
C \|\wt{P}_{\lambda,\omega} u \|_{L^2(\R^d)} \geq 
\lambda^{\frac{1}{\gamma +1}} \biggl( 1+ H(\omega) \Bigl(\frac{\omega}{\lambda^{\frac{1}{\gamma +1}}}\Bigr)^{\frac{\gamma}{2\gamma + 1}} \biggr)\|u \|_{L^2(\R^d)},
\end{align*}
where  $H=\mathbf 1_{\R_{+}}$ is the Heaviside function.
\end{lemma}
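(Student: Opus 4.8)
The plan is to deduce Lemma~\ref{lem: P lambda omega} from Theorem~\ref{th: estimate R^d} by a single dilation of $\R^d$. Fix $\lambda>0$, set $h=\lambda^{-\frac{1}{2(\gamma+1)}}$ so that $\lambda h^{2(\gamma+1)}=1$ and $h^{-2}=\lambda^{\frac{1}{\gamma+1}}$, and for $u\in\Con^2_c(\R^d)$ define $v(y):=u(hy)$, which again belongs to $\Con^2_c(\R^d)$. A direct computation gives
$$
(\wt P_{\lambda,\omega}u)(x)=h^{-2}\bigl[(-\Delta+i\,\wt W_\lambda-\mu)v\bigr](x/h),\qquad \mu:=h^2\omega=\omega\,\lambda^{-\frac{1}{\gamma+1}},\quad \wt W_\lambda(y):=\lambda h^2 W(hy).
$$
The choice of $h$ is made precisely so that the coefficient of $W$ is normalized: writing $\wt W_\lambda(y)=\lambda h^{2(\gamma+1)}\cdot h^{-2\gamma}W(hy)=h^{-2\gamma}W(hy)$ and using the two-sided bound~\eqref{eq: hyp WW}, we get $C_1^{-1}|y|^{2\gamma}\le \wt W_\lambda(y)\le C_1|y|^{2\gamma}\le C_1\langle y\rangle^{2\gamma}$ for every $\lambda>0$. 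Hence the family $(\wt W_\lambda)_{\lambda>0}$ satisfies Assumption~\eqref{eq: hyp V}, and $-\Delta+i\wt W_\lambda$ is an operator of the type $Q_0^\lambda$ to which Theorem~\ref{th: estimate R^d} applies, with a constant $C_0$ uniform in $\lambda$.

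Next I record how the $L^2$-norms transform under $u\mapsto v$: from the change of variables $x=hy$ one has $\|u\|_{L^2(\R^d)}=h^{d/2}\|v\|_{L^2(\R^d)}$ and, from the displayed identity above, $\|\wt P_{\lambda,\omega}u\|_{L^2(\R^d)}=h^{d/2-2}\|(-\Delta+i\wt W_\lambda-\mu)v\|_{L^2(\R^d)}$. Applying Theorem~\ref{th: estimate R^d} to $v$ with $Q_0^\lambda=-\Delta+i\wt W_\lambda$ and this value of $\mu$, then cancelling the common factor $h^{d/2}$ and multiplying by $h^{-2}=\lambda^{\frac{1}{\gamma+1}}$, gives
$$
C_0\,\|\wt P_{\lambda,\omega}u\|_{L^2(\R^d)}\ \ge\ \lambda^{\frac{1}{\gamma+1}}\Bigl(\mu^{\frac{\gamma}{2\gamma+1}}\mathbf 1(\mu\ge 1)+|\mu|\,\mathbf 1(\mu\le -1)+1\Bigr)\|u\|_{L^2(\R^d)}.
$$

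It remains to rewrite the right-hand side in terms of $\omega$, using $\mu=\omega\lambda^{-\frac{1}{\gamma+1}}$, by splitting into three ranges. If $\omega\ge\lambda^{\frac{1}{\gamma+1}}$, then $\mu\ge 1$ and $\mu^{\frac{\gamma}{2\gamma+1}}=\bigl(\omega\lambda^{-\frac{1}{\gamma+1}}\bigr)^{\frac{\gamma}{2\gamma+1}}$, so the bracket equals exactly $1+H(\omega)\bigl(\omega\lambda^{-\frac{1}{\gamma+1}}\bigr)^{\frac{\gamma}{2\gamma+1}}$. If $\omega\le 0$, then $\mu\le 0$ and $H(\omega)\bigl(\omega\lambda^{-\frac{1}{\gamma+1}}\bigr)^{\frac{\gamma}{2\gamma+1}}=0$ (including at $\omega=0$, since the exponent is positive), while the bracket is $\ge 1$; the estimate is thus at least the claimed bound. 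Finally, if $0<\omega<\lambda^{\frac{1}{\gamma+1}}$, then $\bigl(\omega\lambda^{-\frac{1}{\gamma+1}}\bigr)^{\frac{\gamma}{2\gamma+1}}\in(0,1)$, so the desired right-hand side is bounded by $2\lambda^{\frac{1}{\gamma+1}}\|u\|_{L^2(\R^d)}$, whereas the estimate above already yields $\ge\lambda^{\frac{1}{\gamma+1}}\|u\|_{L^2(\R^d)}$; taking $C=2C_0$ covers this case and completes the proof. There is essentially no obstacle here: all the analytic content sits in Theorem~\ref{th: estimate R^d}, and the only point requiring a little care is the choice of the dilation exponent $h=\lambda^{-1/(2(\gamma+1))}$, which simultaneously turns $\lambda W$ into a coefficient trapped between $C_1^{-1}|y|^{2\gamma}$ and $C_1|y|^{2\gamma}$ and converts the spectral parameter $\omega$ into $\mu=\omega\lambda^{-1/(\gamma+1)}$.
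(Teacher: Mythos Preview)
Your proof is correct and follows essentially the same approach as the paper: both reduce to Theorem~\ref{th: estimate R^d} via the dilation with scale $\lambda^{-\frac{1}{2(\gamma+1)}}$ (the paper phrases it as conjugation by the unitary $T_\alpha u(x)=\alpha^{d/2}u(\alpha x)$ with $\alpha=\lambda^{-\frac{1}{2(\gamma+1)}}$, which is the same computation). Your explicit case split to pass from the bracket $\mu^{\frac{\gamma}{2\gamma+1}}\mathbf 1(\mu\ge1)+|\mu|\mathbf 1(\mu\le-1)+1$ to $1+H(\omega)(\omega\lambda^{-\frac{1}{\gamma+1}})^{\frac{\gamma}{2\gamma+1}}$ is a detail the paper leaves implicit (absorbing the factor $2$ into the constant), but the argument is the same.
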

\begin{remark}
Note that this lemma does not use either $\lambda$ large, or $0\leq \omega \leq \lambda^2$. 
\end{remark}
\begin{proof}[Proof of Lemma~\ref{lem: P lambda omega}]
First, we remark that for all $\alpha>0$, the operator
\begin{equation*}
\begin{array}{rcl}
T_\alpha : L^2(\R^d)& \to & L^2(\R^d) \\
u(x)& \mapsto &\alpha^{\frac{d}{2}}u(\alpha x)
\end{array}
\end{equation*}
is an isometry, with inverse $(T_\alpha)^{-1} = T_{\alpha^{-1}}$.
As a consequence, we have
$$
  T_\alpha \tilde{P}_{\lambda,\omega}(T_\alpha)^{-1} 
  = - \alpha^{-2} \Delta - \omega + i \lambda W (\alpha x),
$$
where, according to Assumption~\eqref{eq: hyp WW}, we have
$$
C_1^{-1}\lambda \alpha^{2\gamma} |x|^{2\gamma} \leq \lambda W (\alpha x) \leq C_1 \lambda \alpha^{2\gamma} | x |^{2\gamma}  , \quad x \in \R^d . 
$$
Now, we choose $\alpha = \lambda^{-\frac{1}{2(\gamma+1)}}$, so that we have $\alpha^{-2} = \alpha^{2\gamma} \lambda = \lambda^{\frac{1}{\gamma+1}}$. Setting
$$
W_\lambda (x) := \lambda^{- \frac{1}{\gamma+1}} \lambda W (\alpha x) , 
$$
we obtain the uniform estimates
\begin{equation}
\label{eq: hyp WWbis}
C_1^{-1}  |x|^{2\gamma} \leq  W_\lambda (  x) \leq C_1  | x |^{2\gamma}  , \quad x \in \R^d , \lambda >0 . 
\end{equation}
With $Q_{0}^\lambda$ defined in \eqref{qzero}, this now yields
$$
  T_\alpha \tilde{P}_{\lambda,\omega}(T_\alpha)^{-1} 
  = \lambda^{\frac{1}{\gamma+1}}\left( Q_0^\lambda - \omega\lambda^{-\frac{1}{\gamma+1}}\right) , \qquad \alpha = \lambda^{-\frac{1}{2(\gamma+1)}} .
$$
Since~\eqref{eq: hyp WWbis}
 implies Assumption~\eqref{eq: hyp V},
 we can apply Theorem~\ref{th: estimate R^d} (where $\mu = \omega \lambda^{\frac{1}{\gamma + 1}}$). This yields (still with $\alpha = \lambda^{-\frac{1}{2(\gamma+1)}}$),
\begin{align*}
\| \wt{P}_{\lambda,\omega} u \|_{L^2(\R^d)} &= \| T_\alpha \wt{P}_{\lambda,\omega}(T_\alpha)^{-1}  T_\alpha u\|_{L^2(\R^d)}
= \lambda^{\frac{1}{\gamma+1}}\left\|\left( Q_0^\lambda - \omega\lambda^{-\frac{1}{\gamma+1}}\right) T_\alpha u \right\|_{L^2(\R^d)} \\
&\ge C_0^{-1} \lambda^{\frac{1}{\gamma+1}}
\left( 1  + H(\omega)\left(\frac{\omega}{\lambda^{\frac{1}{\gamma +1}}}\right)^{\frac{\gamma}{2\gamma + 1}} \right) \| T_\alpha u \|_{L^2(\R^d)},
\end{align*}
concluding the proof of the lemma since $T_\alpha$ is an isometry.
\end{proof}
}
\subsection{An elementary
lemma}
Let $\gamma>0, c_{0}>0$ be given.
We define for $\lambda>0,\omega\ge0$,
\begin{equation}
\label{eq: f(lambda,omega)}
f(\lambda, \omega) =1+ \Big(\frac{\omega}{\lambda^{\frac{1}{\gamma +1}}}\Big)^{\frac{2\gamma}{2\gamma + 1}} - c_0 \ \omega \lambda^{- \frac{2}{\gamma+1}}.
\end{equation}
\begin{lemma}
\label{lem: f positive}
$\forall c_{0}>0, \forall \lambda>0, \forall \omega\in [0,c_{0}^{-(2\gamma+1)}\lambda^{2}]$, we have $f(\lambda, \omega) \geq 1$.
\end{lemma}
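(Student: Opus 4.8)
The statement reduces to the elementary inequality
$$
\Big(\frac{\omega}{\lambda^{\frac{1}{\gamma+1}}}\Big)^{\frac{2\gamma}{2\gamma+1}} \geq c_0\, \omega\, \lambda^{-\frac{2}{\gamma+1}},
$$
since $f(\lambda,\omega)\ge 1$ is precisely this inequality with $1$ added on both sides. The case $\omega=0$ is trivial ($f(\lambda,0)=1$), so I would assume $\omega>0$ and divide both sides by $\omega^{\frac{2\gamma}{2\gamma+1}}$. Using $1-\frac{2\gamma}{2\gamma+1}=\frac{1}{2\gamma+1}$, the inequality becomes
$$
\lambda^{-\frac{2\gamma}{(\gamma+1)(2\gamma+1)}} \geq c_0\, \omega^{\frac{1}{2\gamma+1}}\, \lambda^{-\frac{2}{\gamma+1}},
$$
i.e., collecting the powers of $\lambda$ on the left,
$$
c_0\, \omega^{\frac{1}{2\gamma+1}} \leq \lambda^{\frac{2}{\gamma+1}-\frac{2\gamma}{(\gamma+1)(2\gamma+1)}}.
$$

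\textbf{Key computation.} I would simplify the exponent on the right-hand side:
$$
\frac{2}{\gamma+1}-\frac{2\gamma}{(\gamma+1)(2\gamma+1)} = \frac{1}{\gamma+1}\cdot\frac{2(2\gamma+1)-2\gamma}{2\gamma+1} = \frac{1}{\gamma+1}\cdot\frac{2\gamma+2}{2\gamma+1} = \frac{2}{2\gamma+1}.
$$
Hence the desired inequality is equivalent to $c_0\,\omega^{\frac{1}{2\gamma+1}}\le \lambda^{\frac{2}{2\gamma+1}}$, and raising both (nonnegative) sides to the power $2\gamma+1$ this is exactly
$$
c_0^{\,2\gamma+1}\,\omega \leq \lambda^2, \qquad\text{i.e.}\qquad \omega \leq c_0^{-(2\gamma+1)}\lambda^2,
$$
which is precisely the hypothesis on $\omega$. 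Since all the manipulations above are equivalences between inequalities among nonnegative quantities, this proves $f(\lambda,\omega)\ge 1$.

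\textbf{Remark on difficulty.} There is essentially no obstacle here: the lemma is a bookkeeping exercise on exponents, and the only thing to be careful about is not to mishandle the power $2\gamma+1$ when passing from $c_0\,\omega^{\frac{1}{2\gamma+1}}\le\lambda^{\frac{2}{2\gamma+1}}$ to $c_0^{2\gamma+1}\omega\le\lambda^2$ (which is legitimate since $x\mapsto x^{2\gamma+1}$ is increasing on $\R_+$). One could equivalently present the argument monotonically in the single variable $t=\omega\lambda^{-\frac{1}{\gamma+1}}\ge 0$, noting that the claim amounts to $t^{\frac{2\gamma}{2\gamma+1}}\ge c_0\,t\,\lambda^{-\frac{1}{\gamma+1}}$, i.e. $c_0^{2\gamma+1} t \le \lambda^2$ after the same algebra; but the direct computation above is the cleanest route.
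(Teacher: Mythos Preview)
Your proof is correct and follows essentially the same route as the paper's: both reduce $f(\lambda,\omega)\ge 1$ to the inequality $c_0\,\omega^{\frac{1}{2\gamma+1}}\le\lambda^{\frac{2}{2\gamma+1}}$ via the exponent computation $\frac{2}{\gamma+1}-\frac{2\gamma}{(\gamma+1)(2\gamma+1)}=\frac{2}{2\gamma+1}$, and then recognize this as $c_0^{2\gamma+1}\omega\le\lambda^2$. (One small slip in your final parenthetical remark: with $t=\omega\lambda^{-\frac{1}{\gamma+1}}$ the equivalent condition is $c_0^{2\gamma+1}t\le\lambda^{\frac{2\gamma+1}{\gamma+1}}$, not $\lambda^2$; but this does not affect your main argument, which you rightly identify as the cleaner one.)
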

\begin{proof}For $\omega\ge 0, \lambda>0$,
the inequality
$$
c_0 \ \omega \lambda^{- \frac{2}{\gamma+1}}\le \Big(\frac{\omega}{\lambda^{\frac{1}{\gamma +1}}}\Big)^{\frac{2\gamma}{2\gamma + 1}}
$$
is equivalent to 
$
c_{0}\omega^{\frac{1}{2\gamma+1}}\le \lambda^{\frac{2}{\gamma+1}-\frac{2\gamma}{(2\gamma+1)(\gamma+1)}}
=\lambda^{\frac2{2\gamma+1}}
$
\ie to 
$
c_{0}^{2\gamma+1} \omega\le \lambda^{2}.
$
\end{proof}
\section{Proof of Theorem~\ref{th:b-xn-indep}: the invariant case}
\label{sec:inariantcase}
\subsection{Reduction of Theorem~\ref{th:b-xn-indep} to a 
\texorpdfstring{$(n-1)$}{n-1} dimensional problem}
After a Fourier transform in the $x_n$ variable, Theorem~\ref{th:b-xn-indep} reduces to the following result.
\begin{theorem}
\label{th: resolvent estimate 1-D}
Assume~\eqref{eq: x2 indep}, \eqref{eq: x1 gamma} and define the operator acting on $L^2(\Mp)$
\begin{equation}\label{plamom}
P_{\lambda,\omega} =  -\Delta_{\Mp} - \omega + i \lambda b , \quad D(P_{\lambda,\omega}) = H^2(\Mp).
\end{equation}
Then, there exist $C>0$ and $\lambda_0 >0$ such that for all $u \in H^2(\Mp)$, for all $\lambda \geq \lambda_0$ and for all $\omega \leq \lambda^2$, we have 
\begin{align}
\label{eq: resolvent estimate 1-D}
\|P_{\lambda,\omega} u \|_{L^2(\Mp)} \geq C \lambda^{\frac{1}{\gamma+1}} \|u \|_{L^2(\Mp)} .
\end{align}
\end{theorem}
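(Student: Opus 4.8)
The strategy is to localize near the undamped point $y'$ and away from it. Away from $y'$, that is on $\Mp \setminus \mathcal{N}'$, we have $b \geq C_1^{-1}$ by~\eqref{eq: x1 gamma}, so the operator behaves elliptically in the damping; near $y'$ we invoke the flat-metric structure~\eqref{eq: flat metric} together with the homogeneity~\eqref{eq: homo gamma}, and transport the estimate of Lemma~\ref{lem: P lambda omega} to $\Mp$ via a cutoff. Concretely, I would fix $\chi \in \Con^\infty_c(\mathcal{N}')$ with $\chi = 1$ near $y'$, so that $1-\chi$ is supported where $b \geq C_1^{-1}$. First, in the region $\omega \leq 0$ (or $\omega$ very negative) one has the trivial bound $\|P_{\lambda,\omega}u\| \|u\| \geq \Re \langle P_{\lambda,\omega}u, u\rangle = \|\nabla u\|^2 - \omega\|u\|^2 + 0 \geq -\omega \|u\|^2$ and $\|P_{\lambda,\omega}u\|\|u\| \geq \Im\langle P_{\lambda,\omega}u,u\rangle = \lambda \langle b u, u\rangle$; combining with the lower bound on $b$ away from $y'$ handles the case where $u$ is essentially supported off $\mathcal{N}'$. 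So the heart of the matter is $\omega \geq 0$ (and we need the uniform constant for $0 \leq \omega \leq \lambda^2$).

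For the main case, write $u = \chi u + (1-\chi) u$. On $\supp(1-\chi)$, using $b \geq C_1^{-1}$ and~\eqref{eq: ellipticdamping}-type manipulations, $\|P_{\lambda,\omega}u\|\|u\| \gtrsim \lambda \|(1-\chi)^{1/2} u\|^2$, hence $\|(1-\chi)u\|_{L^2}^2 \lesssim \lambda^{-1}\|P_{\lambda,\omega}u\|\|u\|$, which is much smaller than the desired $\lambda^{2/(\gamma+1)}\|u\|^2$ bound once $\lambda$ is large (since $\frac{2}{\gamma+1} < 1$ only fails for small $\gamma$; but even then one gains, because we will divide by $\|u\|$ at the end and the relevant comparison is $\lambda^{-1}$ versus $\lambda^{-2/(\gamma+1) +1}$, and actually this term should be absorbed — more carefully, one compares $\lambda^{1-2/(\gamma+1)}\|u\|$ against $\|P_{\lambda,\omega}u\|$, and when $\gamma \geq 1$ this is fine; for $\gamma < 1$ one instead uses that $\|(1-\chi)u\|^2 \lesssim \lambda^{-1}\|P_{\lambda,\omega}u\|\|u\| $ still yields, via Young, a term absorbable into $\varepsilon \lambda^{2/(\gamma+1)}\|u\|^2 + C\lambda^{-\text{(something positive)}}\|P_{\lambda,\omega}u\|^2$). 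For the main piece $\chi u$, the commutator $[\Delta_{\Mp}, \chi]$ is a first-order operator supported in $\mathcal{N}'$, so $P_{\lambda,\omega}(\chi u) = \chi P_{\lambda,\omega} u + [\,-\Delta_{\Mp},\chi\,] u =: \chi P_{\lambda,\omega} u + R u$ where $\|Ru\|_{L^2} \lesssim \|u\|_{H^1(\mathcal{N}')}$. On $\mathcal{N}' \supset \supp \chi$, the metric is flat~\eqref{eq: flat metric}, so $\Delta_{\Mp}$ is the flat Laplacian there and $\chi u$ can be viewed as a compactly supported function on $\R^{n-1}$; extend $b$ to a function $W$ on $\R^{n-1}$ satisfying $C_1^{-1}|x'-y'|^{2\gamma} \leq W(x') \leq C_1 |x'-y'|^{2\gamma}$ globally (this is possible because $b$ already has this two-sided bound on $\mathcal{N}'$, and we just choose any such extension outside). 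Then $P_{\lambda,\omega}(\chi u) = \wt P_{\lambda,\omega}(\chi u)$ as functions on $\R^{n-1}$ (after translating $y'$ to the origin), and Lemma~\ref{lem: P lambda omega} gives
$$
C\|\wt P_{\lambda,\omega}(\chi u)\|_{L^2} \geq \lambda^{\frac{1}{\gamma+1}}\Bigl(1 + H(\omega)\bigl(\omega \lambda^{-\frac{1}{\gamma+1}}\bigr)^{\frac{\gamma}{2\gamma+1}}\Bigr)\|\chi u\|_{L^2} \geq \lambda^{\frac{1}{\gamma+1}}\|\chi u\|_{L^2}.
$$

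Now collect: $\lambda^{1/(\gamma+1)}\|\chi u\|_{L^2} \lesssim \|\chi P_{\lambda,\omega}u\|_{L^2} + \|Ru\|_{L^2} \lesssim \|P_{\lambda,\omega}u\|_{L^2} + \|u\|_{H^1(\mathcal{N}')}$. The remaining task is to control the $H^1$-norm (the gradient term) $\|\nabla u\|_{L^2(\mathcal{N}')}$ by $\|P_{\lambda,\omega}u\|\|u\|$ plus lower-order terms: this comes from $\Re\langle P_{\lambda,\omega}u, u\rangle = \|\nabla u\|_{L^2(\Mp)}^2 - \omega\|u\|_{L^2}^2$, so $\|\nabla u\|^2 \leq \|P_{\lambda,\omega}u\|\|u\| + \omega\|u\|^2 \leq \|P_{\lambda,\omega}u\|\|u\| + \lambda^2\|u\|^2$, and one must beware that $\lambda^2\|u\|^2$ is too big — but in the commutator term $\|Ru\|$ appears to the first power, and one can use the finer fact that $[-\Delta_{\Mp},\chi]u = 2\nabla\chi\cdot\nabla u + (\Delta\chi)u$ together with an integration by parts to rewrite $\langle \chi P_{\lambda,\omega}u + Ru, \chi u\rangle$ in a way that produces $\|V_\lambda \chi u\|^2 = \lambda\langle b\chi u,\chi u\rangle$ on the imaginary part and $\|\nabla(\chi u)\|^2 - \omega\|\chi u\|^2$ on the real part, so the energy method applies directly to $\chi u$ rather than costing a crude $H^1$ bound. \textbf{The main obstacle} I anticipate is precisely this bookkeeping of the commutator/cutoff error: ensuring that the price paid for localizing — the first-order term $[-\Delta_{\Mp},\chi]u$ — can be reabsorbed without destroying the gain $\lambda^{1/(\gamma+1)}$, especially uniformly in the range $0 \leq \omega \leq \lambda^2$ where the "free" frequency $\omega$ can be as large as $\lambda^2$. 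The clean way around it is to run the multiplier/energy identity directly with the localized function $\chi u$ throughout (so that the remainder $Ru$ only ever appears paired against $\chi u$ or its derivatives, where it is genuinely lower order relative to $\wt P_{\lambda,\omega}(\chi u)$), and to treat the region $\omega \in [\lambda^2/C, \lambda^2]$ by the ellipticity of $-\Delta_{\Mp} - \omega + i\lambda b$ coming from the imaginary part being comparable to $\lambda$ where $b$ is bounded below, exactly as in the $\mu \leq \mu_0$ branch of Theorem~\ref{th: estimate R^d}.
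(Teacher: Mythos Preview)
Your overall architecture---localize with $\chi$, use Lemma~\ref{lem: P lambda omega} on $\chi u$, and control the commutator $[-\Delta_{\Mp},\chi]u$---matches the paper's. But there is a genuine gap in how you handle the commutator, and it is exactly the obstacle you flagged.

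The commutator term forces you to bound $\|\psi\nabla_{x'} u\|^2$ (with $\psi$ supported on $\supp\nabla\chi$), and the energy identity gives $\|\psi\nabla_{x'} u\|^2 \lesssim \|P_{\lambda,\omega}u\|\|u\| + \|u\|^2 + \omega\|\psi u\|^2$. The dangerous term is $\omega\|\psi u\|^2$. You propose to absorb it using only the crude bound $\lambda^{1/(\gamma+1)}\|\chi u\|$ from Lemma~\ref{lem: P lambda omega}; this works only when $\omega \lesssim \lambda^{2/(\gamma+1)}$, and leaves the entire intermediate range $\lambda^{2/(\gamma+1)} \ll \omega \leq \lambda^2$ uncovered. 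Running the multiplier method ``directly on $\chi u$'' does not help: the commutator still carries $\nabla u$ on $\supp\nabla\chi$, which is not lower order in any useful sense.

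The paper closes this gap in two moves you are missing. First, it \emph{keeps} the full strength of Lemma~\ref{lem: P lambda omega}, namely the factor $\lambda^{\frac{2}{\gamma+1}}\bigl(1+(\omega\lambda^{-\frac{1}{\gamma+1}})^{\frac{2\gamma}{2\gamma+1}}\bigr)$ in front of $\|\chi_0 u\|^2$. Writing $\|\psi u\|^2 \lesssim \|\chi_0 u\|^2 + \|\chi_1 u\|^2$ and using $\lambda^2\|\chi_1 u\|^2$ from the elliptic piece, the question becomes whether $\lambda^{\frac{2}{\gamma+1}}\bigl(1+(\omega\lambda^{-\frac{1}{\gamma+1}})^{\frac{2\gamma}{2\gamma+1}}\bigr) - c_0\omega \geq \lambda^{\frac{2}{\gamma+1}}$; this is precisely Lemma~\ref{lem: f positive}, which holds for $\omega \in [0,\delta\lambda^2]$. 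Second, for the remaining window $\delta\lambda^2 \leq \omega \leq \lambda^2$, the argument is \emph{not} ellipticity of the imaginary part (which fails since $b(y')=0$) and is not analogous to the $\mu\leq\mu_0$ compactness branch of Theorem~\ref{th: estimate R^d}. It is a one-microlocal propagation estimate on $\Mp$ (Lemma~\ref{lem: estimate 1 micro}): because $b>0$ on $\Mp\setminus\{y'\}$, every bicharacteristic of $-\Delta_{\Mp}-\omega$ meets $\{b>0\}$, so GCC holds on $\Mp$ and one gets the stronger bound $\|P_{\lambda,\omega}u\|\gtrsim\lambda\|u\|$ in that regime.
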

In this section, we only prove that Theorem~\ref{th: resolvent estimate 1-D} implies Theorem~\ref{th:b-xn-indep}. The proof of Theorem~\ref{th: resolvent estimate 1-D} needs more work and is completed in Section~\ref{sec:proof-th-reduit}.
\begin{proof}[Proof that Theorem~\ref{th: resolvent estimate 1-D} $\Rightarrow$ Theorem~\ref{th:b-xn-indep}]
We perform a Fourier transform in the variable $x_n \in \T^1$:
$$
u(x', x_n) = \sum_{k \in \Z} \hat{u}_k (x') e^{i k x_n} , \quad \text{with} \quad\hat{u}_k (x') =  \frac{1}{2\pi} \int_{0}^{2\pi} u(x',x_n) e^{- i k x_n} dx_n .
$$
Then, for $u \in H^2(M)$, we have  with $P_{\lambda}$ defined in \eqref{plambda} and $P_{\lambda,\omega}$ in \eqref{plamom},
$$
(P_\lambda u )(x',x_n) =  \sum_{k \in \Z} \left((-\Delta_{\Mp} + k^2 - \lambda^2 + i \lambda b)\hat{u}_k \right) (x') e^{i k x_n} = \sum_{k \in \Z} \left(P_{\lambda , \lambda^2-k^2}\hat{u}_k \right) (x') e^{i k x_n} ,
$$
as $b = b(x')$ does not depend on the $x_n$-variable. We hence obtain 
$$
 \|P_\lambda u\|_{L^2(M)}^2 = 2\pi \sum_{k \in \Z} \| P_{\lambda , \lambda^2-k^2}\hat{u}_k \|_{L^2(\Mp)}^2 .
$$
Finally, as a consequence of Theorem~\ref{th: resolvent estimate 1-D}, we have 
$\| P_{\lambda , \lambda^2-k^2} w \|_{L^2(\Mp)} \geq C \lambda^{\frac{1}{\gamma+1}} \|w\|_{L^2(\Mp)}$ where $C>0$ does not depend on $k$. This yields
$$
 \|P_\lambda u\|_{L^2(M)}^2 \geq 2\pi C^{2} \sum_{k \in \Z}\lambda^{\frac{2}{\gamma+1}} \| \hat{u}_k \|_{L^2(\Mp)}^2 = C^{2} \lambda^{\frac{2}{\gamma+1}} \| u\|_{L^2(M)}^2 ,
$$
which proves Theorem~\ref{th:b-xn-indep}.
\end{proof}
\subsection{Proof of Theorem~\ref{th: resolvent estimate 1-D}}
\label{sec:proof-th-reduit}
We now want to use Lemma~\ref{lem: P lambda omega} in a \nhd of $\{y'\} \times \T^1$ and to patch estimates together to complete  the proof of Theorem~\ref{th: resolvent estimate 1-D}.
\begin{proof}[Proof of Theorem~\ref{th: resolvent estimate 1-D}]
Let $\chi_0 \in \Cinfc(B(y',\eps_0);[0,1])$ such that $\chi_0=1$ in a  \nhd of $y'$ and set $\chi_1 = 1-\chi_0 \in \Cinf(\Mp)$. 
On the one hand, we have, with $P_{\lambda, \omega}$ given by
\eqref{plamom},
\begin{equation}
\label{eq estimee GCC++}
C\|P_{\lambda, \omega} \chi_1 u\|_{L^2(\Mp)} \geq  \lambda \|\chi_1 u\|_{L^2(\Mp)} ,
\end{equation}
{since, according to Assumption~\eqref{eq: x1 gamma}, $b$ is bounded from below on $\supp(\chi_1)$ and hence}
\begin{align*}
 \lambda \|\chi_1 u\|_{L^2(\Mp)}^2 &\leq C \lambda \langle b \chi_1 u ,\chi_1 u\rangle_{L^2(\Mp)}
 = C \Im \langle P_{\lambda, \omega} \chi_1 u,\chi_1 u \rangle_{L^2(\Mp)}\\
 &\leq C\|P_{\lambda, \omega} \chi_1 u\|_{L^2(\Mp)}\|\chi_1 u\|_{L^2(\Mp)} .
\end{align*}
On the other hand, we have
\begin{equation*}
\|P_{\lambda, \omega}  \chi_0 u\|_{L^2(\Mp)}^2 = \|P_{\lambda, \omega}  \chi_0 u\|_{L^2(B(y',\eps_0))}^2.
\end{equation*}
We write $x' = (x_1, \cdots x_{n-1})$. 
{According to Assumption~\eqref{eq: homo gamma}, we can extend (for instance by homogeneity in the variable $x'-y'$ outside $B_{\R^{n-1}}(y',\eps_0)$) the function $b$ from $B_{\R^{n-1}}(y',\eps_0)$ to the whole $\R^{n-1}$ as a measurable function $W$ satisfying
$$
C_1^{-1} |x'-y'|^{2\gamma} \leq W(x') \leq C_1 | x' -y'|^{2\gamma}  , \quad x' \in \R^{n-1} 
 ,\quad \text{and} \quad  b = W \text{ on }B(y',\eps_0) .
$$
}
Hence, we have 
$$
P_{\lambda , \omega} \chi_0 = \tilde{P}_{\lambda , \omega} \chi_0,
$$
where $\tilde{P}_{\lambda , \omega}$ is given by \eqref{ptildelom}.
We may then apply Lemma~\ref{lem: P lambda omega} to the operator $\tilde{P}_{\lambda,\omega}$ in $\R^{n-1}$. This yields, for some $C>0$,  
\begin{align}
\label{eq: fourier series++}
C\|P_{\lambda,\omega}  \chi_0 u\|_{L^2(\Mp)}^2 &= C\|\tilde{P}_{\lambda,\omega}  \chi_0 u\|_{L^2(\R^{n-1})}^2  \nonumber \\
& \ge   
 \lambda^{\frac{2}{\gamma+1}} \left( 1+ H(\omega)\left(\frac{\omega}{\lambda^{\frac{1}{\gamma +1}}}\right)^{\frac{2\gamma}{2\gamma + 1}} \right) \| \chi_0 u\|_{L^2(\R^{n-1})}^2 \nonumber \\
& =    
 \lambda^{\frac{2}{\gamma+1}} \left( 1+H(\omega)\left(\frac{\omega}{\lambda^{\frac{1}{\gamma +1}}}\right)^{\frac{2\gamma}{2\gamma + 1}} \right) \| \chi_0 u\|_{L^2(\Mp)}^2 .
\end{align}
We now want to estimate the remainder term
\begin{align}
\label{eq: commute tronc}
\|[P_{\lambda,\omega} ,\chi_1] u\|_{L^2(\Mp)}
= 
\|[P_{\lambda,\omega} ,\chi_0] u\|_{L^2(\Mp)}
& =\|[-\Delta_{\Mp} ,\chi_0] u\|_{L^2(\Mp)}
 \nonumber \\
&\leq \|(\Delta_{\Mp}\chi_0 ) u\|_{L^2(\Mp)}
 +2 \|\nabla_{x'}\chi_0 \cdot \nabla_{x'} u\|_{L^2(\Mp)} .
\end{align}
For this, we take $\psi = \psi(x')\in \Cinfc(B(0, \eps_0);[0,1])$ such that $\psi = 1$ on $\supp(\nabla_{x'}\chi_0)$ and $\psi = 0$ in a \nhd of $0$.
We compute
\begin{align}
\label{eq: estimate tronc}
\re \langle P_{\lambda, \omega} u , \psi^2 u\rangle_{L^2(\Mp)}
& = \re \langle (-\Delta_{\Mp} - \omega) u , \psi^2 u\rangle_{L^2(\Mp)} 
+\overbrace{ \re\langle i \lambda \psi^2 b u ,  u\rangle_{L^2(\Mp)}}^{=0} \nonumber \\
& = \re \langle -\Delta_{\Mp}u , \psi^2 u \rangle_{L^2(\Mp)}  - \omega \|\psi u \|_{L^2(\Mp)}^2  .
\end{align}
Moreover, we have
\begin{align*}
\re \langle -\Delta_{\R^{n-1}}u , \psi^2 u\rangle_{L^2(\Mp)} 
&= \langle \nabla_{x'}u , \psi^2\nabla_{x'} u\rangle_{L^2(\Mp)} 
+ \re \langle \nabla_{x'}u , u \nabla_{x'} \psi^2\rangle_{L^2(\Mp)} \\
&= \langle \nabla_{x'}u , \psi^2\nabla_{x'} u\rangle_{L^2(\Mp)} 
+ \sum_{j = 1}^{n-1}\re \langle i D_{x_j}u , u \d_{x_j} \psi^2\rangle_{L^2(\Mp)} \\
&= \langle \nabla_{x'}u , \psi^2\nabla_{x'} u\rangle_{L^2(\Mp)} 
- \sum_{j = 1}^{n-1}\frac{i}{2} \langle  [D_{x_j}, \d_{x_j} \psi^2]u , u \rangle_{L^2(\Mp)} \\
&= \langle \nabla_{x'}u , \psi^2\nabla_{x'} u\rangle_{L^2(\Mp)} 
-  \frac12\langle  (\Delta_{\R^{n-1}} \psi^2) u , u \rangle_{L^2(\Mp)} ,
\end{align*}
since the two operators $D_{x_j}$ and $\d_{x_j} \psi^2$ are selfadjoint.
We have thus 
\begin{multline*}
\norm{\psi\nabla_{x'} u}^{2}_{L^{2}(M')}
=\re \langle \bigl(-\Delta_{\R^{n-1}}-\omega+i\lambda b\bigr)u , \psi^2 u\rangle_{L^2(\Mp)} +
\overbrace{\re \langle \bigl(\omega-i\lambda b\bigr)u , \psi^2 u\rangle_{L^2(\Mp)} }^{=\omega\norm{\psi u}^{2}_{L^{2}(M')}}
\\+\frac12\langle  (\Delta_{\R^{n-1}} \psi^2) u , u \rangle_{L^2(\Mp)},
\end{multline*}
and consequently
we obtain
$$
\| \psi \nabla_{x'} u\|_{L^2(\Mp)}^2 \leq
\|P_{\lambda, \omega} u \|_{L^2(\Mp)} \| u\|_{L^2(\Mp)} + C_{1} \| u\|_{L^2(\Mp)}^2 
+  \omega \| \psi u \|_{L^2(\Mp)}^2.
$$
As a result, we can estimate the commutator of \eqref{eq: commute tronc} by
\begin{align}
\label{eq: comm}
\|[P_{\lambda, \omega} ,\chi_0] u\|_{L^2(\Mp)}^2\leq C_{2} \Big( 
\|P_{\lambda, \omega} u \|_{L^2(\Mp)} \| u\|_{L^2(\Mp)} + \| u\|_{L^2(\Mp)}^2 
+ \omega \| \psi u\|_{L^2(\Mp)}^2 \Big).
\end{align}
Now, we have
$
\norm{P_{\lambda, \omega}(\chi_{j}u)}_{L^{2}(M')}^{2}\le 
2\norm{[P_{\lambda, \omega},\chi_{j}]u}_{L^{2}(M')}^{2}+2\norm{\chi_{j}P_{\lambda, \omega}u}_{L^{2}(M')}^{2},
$
so that 
\begin{equation}
2\|P_{\lambda, \omega} u \|_{L^2(\Mp)}^2 
\ge
\|P_{\lambda, \omega} \chi_0 u \|_{L^2(\Mp)}^2 + \|P_\lambda \chi_1 u \|_{L^2(\Mp)}^2 
- 4 \|[P_{\lambda, \omega} ,\chi_0] u\|_{L^2(\Mp)}^2 ,
\end{equation}
which, combined with the estimates~
\eqref{eq estimee GCC++}, \eqref{eq: fourier series++}
 and~\eqref{eq: comm}, yields
\begin{align}
\label{eq: estimate -omega}
C_{3}\Bigl(\|P_{\lambda, \omega} u\|_{L^2(\Mp)}^2 + \|u\|_{L^2(\Mp)}^2\Bigr)
& \ge \lambda^{\frac{2}{\gamma+1}}\left( 1+ H(\omega)\left(\frac{\omega}{\lambda^{\frac{1}{\gamma +1}}}\right)^{\frac{2\gamma}{2\gamma + 1}} \right) \| \chi_0 u\|_{L^2(\Mp)}^2  \nonumber \\
&\quad  +  \lambda^2 \|\chi_1 u\|_{L^2(\Mp)}^2
 - c_1 \omega \| \psi u \|_{L^2(\Mp)}^2,
\end{align}
where  $c_{1}$ is a fixed positive constant.

In the r\'egime $\omega \leq 0$ (or, more generally, $\omega \leq \omega_0$ for any given $\omega_0$), this suffices to prove~\eqref{eq: resolvent estimate 1-D}. 
\vs
Let us now study the r\'egime $\omega \geq 0$. We notice that,
for $\omega \leq \lambda^2$, $\lambda\geq 1$ ,
\begin{align*}
\lambda^{\frac{2}{\gamma+1}}\Big( 1+ \Big(\frac{\omega}{\lambda^{\frac{1}{\gamma +1}}}\Big)^{\frac{2\gamma}{2\gamma + 1}} \Big)
& =\lambda^{\frac{2}{\gamma+1}}+
\lambda^{\frac{2}{\gamma+1}-\frac{2\gamma}{(2\gamma+1)(\gamma+1)}}
\omega^{\frac{2\gamma}{2\gamma+1}} \\
& \le\lambda^{\frac{2}{\gamma+1}}+
\lambda^{\frac{2}{\gamma+1}-\frac{2\gamma}{(2\gamma+1)(\gamma+1)}+\frac{4\gamma}{2\gamma+1}}
= \lambda^{\frac{2}{\gamma+1}} + \lambda^2 \leq 2\lambda^2,
\end{align*}
 and that, for all $v \in L^2(\Mp)$ we have
$$
\| \psi v\|_{L^2(\Mp)}^2 \le C_{4} \|  v\|_{L^2(\Mp)}^2=C_{4} \| (\chi_0 + \chi_1) v\|_{L^2(\Mp)}^2
\le 2C_{4}  \| \chi_0 v\|_{L^2(\Mp)}^2+2C_{4}\| \chi_1 v\|_{L^2(\Mp)}^2 .
$$
This, together with \eqref{eq: estimate -omega} then yields
\begin{align*}C_{5}
\Bigl(\|P_{\lambda, \omega} u\|_{L^2(\Mp)}^2 + \|u\|_{L^2(\Mp)}^2\Bigr)
&  \ge  \lambda^{\frac{2}{\gamma+1}}  f(\lambda, \omega) \| (\chi_0 + \chi_1) u \|_{L^2(\Mp)}^2 =  \lambda^{\frac{2}{\gamma+1}} f(\lambda, \omega) \| u \|_{L^2(\Mp)}^2,
\end{align*}
where $f(\lambda, \omega)$ is defined in~\eqref{eq: f(lambda,omega)}
with a fixed positive constant $c_{0}$.
According to Lemma~\ref{lem: f positive}, there exists $\lambda_0>0$ and 
\begin{equation}\label{delta0}
\delta =c_{0}^{-(2\gamma+1)}>0,
\end{equation}
 such that for all $\lambda \geq \lambda_0$ and $\omega \in [0, \delta \lambda^2]$, we have $f(\lambda, \omega) \geq 1$. As a consequence, \eqref{eq: resolvent estimate 1-D} is satisfied in this r\'egime.
Finally, suppose that $ \delta \lambda^2 \leq \omega \leq \lambda^2$, where $\delta$ is given by \eqref{delta0}. In this r\'egime, the estimate \eqref{eq: resolvent estimate 1-D} is a direct consequence of the usual (stronger) 1-microlocal estimate (see Lemma~\ref{lem: estimate 1 micro} below).
\end{proof}
\begin{lemma}
\label{lem: estimate 1 micro}
Let $\delta >0$, $y' \in \Mp$ and suppose that $b(y')=0$ and $b>0$ on $\Mp \setminus \{y'\}$. Then, there exists $\lambda_0>0$ and $C>0$ such that for all $\lambda\geq \lambda_0$, for all $\omega \in [\delta \lambda^2, \lambda^2]$, we have
$$
\|P_{\lambda, \omega} u\|_{L^2(\Mp)} \gtrsim \lambda  \| u \|_{L^2(\Mp)} .
$$
\end{lemma}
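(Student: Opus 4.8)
The plan is to prove this \emph{1-microlocal estimate} by the standard contradiction argument based on semiclassical defect measures. The key structural point is that the range $\omega \in [\delta\lambda^2, \lambda^2]$ puts us in a genuinely semiclassical regime with limiting energy $E_\infty \geq \delta > 0$, and in that regime the region $\Mp \setminus \{y'\}$ controls $S^*\Mp$ for the trivial reason that a unit-speed geodesic leaves the single point $y'$ instantaneously. Concretely, suppose the estimate fails: there are $\lambda_k \to +\infty$, $\omega_k \in [\delta\lambda_k^2, \lambda_k^2]$ and $u_k \in H^2(\Mp)$ with $\|u_k\|_{L^2(\Mp)} = 1$ and $\|P_{\lambda_k,\omega_k} u_k\|_{L^2(\Mp)} = \eps_k \lambda_k$, $\eps_k \to 0^+$. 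Set $h_k = \lambda_k^{-1}$ and $E_k = h_k^2 \omega_k \in [\delta,1]$; up to a subsequence $E_k \to E_\infty \in [\delta,1]$, so $E_\infty \geq \delta > 0$. Since $h_k^2\omega_k = E_k$ and $h_k^2\lambda_k = h_k$, multiplying $P_{\lambda_k,\omega_k}u_k$ by $h_k^2$ gives
\[
(-h_k^2 \Delta_{\Mp} - E_k + i h_k b)\, u_k = r_k , \qquad r_k := h_k^2 P_{\lambda_k,\omega_k} u_k , \quad \|r_k\|_{L^2(\Mp)} = \eps_k h_k .
\]

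Next I would extract the usual a priori information. Taking the imaginary part of the $L^2(\Mp)$-pairing of this identity with $u_k$ yields $h_k \langle b u_k, u_k\rangle_{L^2(\Mp)} = \Im\langle r_k, u_k\rangle = O(\eps_k h_k)$, hence $\langle b u_k, u_k\rangle_{L^2(\Mp)} \to 0$; the real part yields $\|h_k \nabla_{g'} u_k\|_{L^2(\Mp)}^2 = E_k + O(\eps_k h_k) \to E_\infty$, so $(u_k)$ is bounded in the semiclassical space $H^1_{h_k}(\Mp)$ and in particular is $h_k$-oscillating. Passing to a subsequence, $(u_k)$ has a semiclassical defect measure $\mu \geq 0$ on $T^*\Mp$, and the $H^1_{h_k}$-bound together with compactness of $\Mp$ prevents any escape of mass, so $\mu(T^*\Mp) = 1$. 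Since $(-h_k^2\Delta_{\Mp} - E_k) u_k = r_k - i h_k b u_k \to 0$ in $L^2(\Mp)$, ellipticity away from the characteristic set forces $\supp \mu \subset \{|\xi|_{g'}^2 = E_\infty\}$. Moreover, since $\langle b u_k, u_k\rangle_{L^2(\Mp)} \to 0$ while $\essinf_K b > 0$ on every compact $K \subset \Mp \setminus \{y'\}$ (which holds in our situation), the weak-$*$ limit $\pi_*\mu$ of $|u_k|^2\,dx$ gives no mass to $\Mp \setminus \{y'\}$, so $\pi_*\mu = \delta_{y'}$ and
\[
\supp \mu \subset \pi^{-1}(\{y'\}) \cap \{|\xi|_{g'}^2 = E_\infty\} , \qquad \pi : T^*\Mp \to \Mp .
\]

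The final step is the standard positive-commutator (propagation) computation: using self-adjointness of $-h_k^2 \Delta_{\Mp} - E_k$, the bound $\|r_k\|_{L^2(\Mp)} = O(\eps_k h_k)$, and $\langle \mathrm{Op}_{h_k}(ab) u_k, u_k\rangle \to \int ab\, d\mu = 0$ (the symbol $ab$ vanishes on $\supp \mu$ since $b(y')=0$), one obtains for every $a \in \Cinfc(T^*\Mp)$
\[
\int_{T^*\Mp} \{|\xi|_{g'}^2, a\}\, d\mu = \lim_{k\to\infty} \frac{i}{h_k}\big\langle [-h_k^2\Delta_{\Mp} - E_k,\, \mathrm{Op}_{h_k}(a)]\, u_k, u_k\big\rangle = 0 ,
\]
so $\mu$ is invariant under the geodesic flow $\phi_t$, and so is $\supp\mu$. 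But if $\rho = (y', \xi_0) \in \supp\mu$, then $|\xi_0|_{g'} = \sqrt{E_\infty} > 0$, hence the base point $\pi(\phi_t(\rho)) = \exp_{y'}(t\,\xi_0^\sharp)$ of $\phi_t(\rho)$ leaves $y'$ for all small $t \neq 0$, contradicting $\phi_t(\supp\mu) = \supp\mu \subset \pi^{-1}(\{y'\})$ unless $\supp\mu = \emptyset$. This contradicts $\mu(T^*\Mp) = 1$, which proves the estimate.

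\emph{On the main difficulty.} There is no serious obstacle beyond invoking correctly the semiclassical calculus on a compact manifold — existence of the measure, conservation of total mass via $h$-oscillation, and the propagation identity — all of which are classical (see the references on (GCC) cited in the introduction). The one point genuinely worth stressing is that the whole argument hinges on $E_\infty \geq \delta > 0$: this is exactly what makes the geodesic flow on the limiting energy shell move, so that the single point $y'$ cannot carry a flow-invariant measure; this is why the hypothesis $\omega \in [\delta\lambda^2,\lambda^2]$ is the natural range. Alternatively one could deduce the estimate from the Rauch--Taylor theorem applied on $(\Mp,g')$ (geodesics leave $y'$ instantly, so the damping region satisfies (GCC)), but one would still need a version of it uniform in the spectral parameter $\omega$; the measure argument supplies that uniformity for free.
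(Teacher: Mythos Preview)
Your argument is correct and follows a genuinely different route from the paper. The paper does not give a self-contained proof of this lemma but points to the proof of Lemma~\ref{lem: geom control}, whose method is a constructive multiplier estimate: one builds escape functions along the bicharacteristic flow, computes $\Re\langle P_{\lambda,\omega}u,\,iMu\rangle$ for a suitable zeroth-order multiplier $M$, and patches this with an elliptic piece; the only adjustment needed for the present lemma is that the compact set $K$ in the geometric setup must sweep the energy shells $\{|\xi|_{x'}^2=\omega/\lambda^2\}$ for $\omega/\lambda^2\in[\delta,1]$. You instead argue by contradiction via semiclassical defect measures, in the spirit of Lebeau and Burq--G\'erard. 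Your route is slicker and absorbs the uniformity in $\omega$ for free through the extraction $E_k\to E_\infty\in[\delta,1]$; the paper's route is constructive and in principle gives explicit constants, consistent with its stated preference for multiplier methods over compactness arguments.

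One minor caveat: the line invoking $\Op_{h_k}(ab)$ tacitly assumes $b$ is smooth. Under the mere hypothesis $b\in L^\infty$ (as allowed by the ambient theorem), replace that step by the observation that $\langle bu_k,u_k\rangle\to 0$ with $b\ge 0$ gives $\|b^{1/2}u_k\|_{L^2}\to 0$, hence $\|bu_k\|_{L^2}\le \|b\|_\infty^{1/2}\|b^{1/2}u_k\|_{L^2}\to 0$, so the damping contribution $\langle Au_k,bu_k\rangle$ in the commutator identity still vanishes and the flow-invariance of $\mu$ follows as you claim.
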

This lemma states the classical estimate associated to a ``one-microlocal'' propagation result in the presence of geometric control. We do not provide  a proof here since it is simpler than the proof of Lemma~\ref{lem: geom control} below and would follow exactly the same lines. The only additional difficulty with respect to the proof of Lemma~\ref{lem: geom control} is that the constants are uniform with respect to the parameter $\omega \in [\delta \lambda^2, \lambda^2]$ (whereas Lemma~\ref{lem: geom control} only tackles the case $\omega = \lambda^2$). It only requires a simple change of definition of the compact $K$ in the geometric definitions in the first part of the proof of Lemma~\ref{lem: geom control}.
\section{Proof of Theorem~\ref{th: resolvent estimate}: the non-invariant case}
\label{sec:proofmainth}
\subsection{Proof of a geometric control lemma}
\label{s:geometriccontrol}
In this section, we prove the following lemma. All definitions and tools of geometry and pseudodifferential calculus used in the proof are introduced in Appendices~\ref{s:geometry} and~\ref{s:pseudodiff} respectively.
{\begin{lemma}
\label{lem: geom control}
Assume that $b \in L^\infty(M; \R^+)$ and recall that $\omega_b$ is defined in~\eqref{eq:defomega}. Take a non-negative function $\alpha \in S^0_{0,0}(T^*M)$. Assume that for all $\rho \in \supp(\alpha) \cap S^*M$ there exists $t \in \R$ such that $\phi_{t}(\rho) \in T^*\omega_b$. Then, there exist $C,\lambda_0 >0$ such that for all $\lambda \geq \lambda_0$, we have
\begin{align*}
\lambda \|\Op \bigl(\alpha(x, \frac{\xi}{\lambda})\bigr) u\|_{L^2(M)} \leq C \|P_\lambda \Op \bigl(\alpha(x, \frac{\xi}{\lambda})\bigr) u \|_{L^2(M)} + C \|u \|_{L^2(M)} .
\end{align*}
\end{lemma}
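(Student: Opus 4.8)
The plan is to reduce this to a propagation-of-regularity (or energy-propagation) estimate along the geodesic flow, which is the classical strategy going back to the Rauch--Taylor/Bardos--Lebeau--Rauch arguments, adapted here to the semiclassical scaling $\xi \mapsto \xi/\lambda$. First I would set $h = 1/\lambda$ and let $a(x,\xi) = \alpha(x,\xi)$, working with the semiclassical operator $A = \Op(\alpha(x,h\xi))$, which is bounded on $L^2(M)$ uniformly in $h$ since $\alpha \in S^0_{0,0}$. Put $v = Au$ and observe that $P_\lambda A u = \lambda^2(h^2(-\Delta_g) - 1 + ih b) v =: \lambda^2 \mathcal{P}_h v$, so the claimed estimate is equivalent to $\|v\|_{L^2} \lesssim \lambda \|\mathcal{P}_h v\|_{L^2} + \lambda^{-1}\|u\|_{L^2}$, i.e.\ to a semiclassical observability-type bound $\|v\|_{L^2}^2 \lesssim h^{-2}\|\mathcal{P}_h v\|_{L^2}^2 + \|u\|_{L^2}^2$ for functions $v$ that are (microlocally) supported where $\alpha \neq 0$.

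The key step is an \emph{elliptic regularity cut-off plus a defect-measure / contradiction argument}. First, on the elliptic region $\{|\xi|^2 \neq 1\}$ (in the rescaled variables), the operator $h^2(-\Delta_g)-1$ is elliptic, so one controls that part of $v$ directly by $\|\mathcal{P}_h v\|$ plus lower-order terms; thus one may assume $v$ is microlocalized in a neighborhood of the characteristic set $S^*M$, which is where $\supp(\alpha)$ meets the relevant directions. Then I would run the standard argument: suppose the estimate fails, extract a sequence $h_k \to 0$, $v_k$ with $\|v_k\|_{L^2}=1$, $h_k^{-1}\|\mathcal{P}_{h_k}v_k\|_{L^2} \to 0$, and $\|u_k\|_{L^2}\to 0$ (after renormalizing), and attach a semiclassical defect measure $\mu$ to $(v_k)$. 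From $\|\mathcal{P}_{h_k}v_k\|=o(h_k)$ one gets, testing against $v_k$: taking imaginary parts, $\langle b v_k, v_k\rangle \to 0$, hence $\mu$ vanishes on $T^*\omega_b$ (using that $b$ is bounded below on any relatively compact open subset of $\omega_b$); taking real parts gives that $\mu$ is supported on the characteristic set $\{|\xi|_g = 1\}$ and carried by $\supp(\alpha)$. The $o(h_k)$ (rather than merely $O(h_k)$) rate is exactly what forces $\mu$ to be \emph{invariant under the geodesic flow} $\phi_t$ --- this is the propagation theorem for defect measures. But by hypothesis every point of $\supp(\alpha)\cap S^*M$ is sent into $T^*\omega_b$ by some $\phi_t$, and $\mu$ vanishes there and is $\phi_t$-invariant, so $\mu \equiv 0$; combined with the elliptic control this forces $v_k \to 0$ in $L^2_{loc}$, contradicting $\|v_k\|=1$ once one checks no mass escapes (compactness of $M$ handles this).

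I would carry the steps out in this order: (i) rescale and reformulate as a semiclassical estimate for $\mathcal{P}_h v$ with $v = Au$; (ii) dispose of the elliptic region by symbolic calculus, reducing to $v$ microlocalized near $S^*M \cap \supp(\alpha)$; (iii) set up the contradiction sequence and its defect measure $\mu$; (iv) extract from $\langle \mathcal{P}_h v_k, v_k\rangle = o(h_k)$ the facts that $\mu$ lives on the cosphere bundle, is supported in $\supp(\alpha)$, and annihilates $T^*\omega_b$; (v) upgrade to $\phi_t$-invariance of $\mu$ via the standard commutator/propagation computation (here one uses that $b \in L^\infty$ only enters through a term that is $O(1)$ in the commutator and thus does not obstruct propagation, only the zeroth-order damping term on the diagonal); (vi) invoke the geometric hypothesis to conclude $\mu = 0$ and derive the contradiction.

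The main obstacle I anticipate is step (v) together with the low regularity of $b$: since $b$ is merely in $L^\infty$, one cannot quantize it as a pseudodifferential symbol, so the propagation argument must be organized so that $b$ only appears via the nonnegative quadratic form $\langle b v_k, v_k\rangle$ (controlled by the imaginary part) and never needs to be differentiated or commuted. Concretely, when computing $\frac{d}{dt}$ of $\langle \Op(\alpha\circ\phi_t) v_k, v_k\rangle$ or the analogous commutator $\frac{i}{h}[\mathcal{P}_h, \Op(q)]$ for a test symbol $q$, the contribution of the damping term is $\pm\langle b\, \Op(q) v_k, v_k\rangle$ (up to $O(h)$), which is bounded by $\|b\|_{L^\infty}^{1/2}\|b^{1/2}v_k\|\,\|b^{1/2}\Op(q')v_k\|$-type quantities, all $\to 0$; this requires a small Cauchy--Schwarz/Gårding juggling but no symbolic calculus on $b$. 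A secondary technical point is making the ``geometric control $\Rightarrow$ uniform time'' passage effective: by compactness of $\supp(\alpha)\cap S^*M$ and openness of $T^*\omega_b$, the entrance times are locally bounded, so one can choose a single $T$ such that $\bigcup_{|t|\le T}\phi_t(T^*\omega_b) \supset \supp(\alpha)\cap S^*M$, which is what one actually feeds into the positive-commutator construction.
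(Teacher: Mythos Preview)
Your proposal is correct in outline and would succeed, but it takes a genuinely different route from the paper. The paper proves the lemma by a \emph{direct multiplier (positive-commutator) method}: it builds explicit multipliers $m_j(x,\xi/\lambda)=\chi(|\xi|_x^2/\lambda^2)\int_0^{t_j}\chi_j\circ\phi_\tau\,d\tau$ from a finite cover of $\supp(\alpha)\cap S^*M$ by propagated charts, computes $\Re\langle P_\lambda u, i\Op(m_j)u\rangle$, and combines this with a separate elliptic multiplier $\theta(|\xi|_x^2/\lambda^2)$ and a G{\aa}rding patching step. The low regularity of $b$ is handled by replacing $b$ with a smooth minorant $\tilde b\in C^\infty_c(\omega_b)$ with $0\le\tilde b\le b$, so all symbolic calculus is done on $\tilde b$ while $b$ itself only appears through $\langle b u,u\rangle$. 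By contrast, you argue by contradiction and semiclassical defect measures: normalize a violating sequence, show its measure is flow-invariant, supported in $\supp(\alpha)\cap S^*M$, and null on $T^*\omega_b$, then invoke the geometric hypothesis to force $\mu=0$. Your treatment of the $L^\infty$ damping---controlling the commutator contribution via $\|b^{1/2}v_k\|\to 0$ and Cauchy--Schwarz rather than by symbolic calculus on $b$---is the correct way to avoid differentiating $b$, and your compactness remark extracting a uniform time $T$ mirrors the paper's extraction of a finite cover and the smooth minorant $\tilde b$.

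What each approach buys: the paper's multiplier proof is \emph{constructive}---the constants are in principle explicit and traceable through the finite cover and the G{\aa}rding remainders---and the authors state this explicitly as a motivation. Your defect-measure argument is shorter to write and more conceptual, but non-constructive (the constant comes from a compactness contradiction). Both handle $b\in L^\infty$; the paper does so by the smooth minorant trick, you by the $b^{1/2}$-Cauchy--Schwarz trick. One minor point to tighten in your write-up: after renormalizing $\|v_k\|=1$, the contradiction hypothesis gives $\lambda_k^{-1}\|u_k\|\to 0$, not $\|u_k\|\to 0$; this is all you need, since the $\|u\|$ term enters only as a harmless $O(h_k)$ remainder.
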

}
 {This Lemma states a ``one-microlocal'' estimate in the presence of a partial geometric control, which is adapted to our needs.} We give a proof here to check that no smoothness is required on $b$. Moreover, the proof below uses multiplier estimates and is hence of constructive type. 
Note that if $\omega_b$ satisfies (GCC), then the assumption of the lemma is satisfied by $\alpha = 1$ and the lemma yields the optimal estimate
\begin{align*}
\lambda \| u\|_{L^2(M)} \leq C \|P_\lambda  u \|_{L^2(M)} .
\end{align*}
This estimate is equivalent to the uniform (and hence exponential) decay of the associated problem~\eqref{eq: stabilization} (see for instance~\cite{AL:13} and the references therein).
\begin{proof}
The proof is divided in several steps.
\paragraph{\bfseries Some geometric facts.}
 Set $K := \supp(\alpha) \cap S^*M \subset T^*M$ and, for $\rho \in K$ denote $t_\rho \in \R$ a time such that $\phi_{t_\rho}(\rho) \in T^*\omega_b$. The compact set $K$ is hence such that for all $\rho \in K$, $\rho \in \phi_{- t_\rho}(T^*\omega_b)$, i.e.
$$K \subset \bigcup_{t \in\R}\phi_{-t}( T^*(\omega_b)).$$
{Moreover, for $\ell \in \N^*$, we let $\omega^\ell$
be a family of open sets such that $\ovl{\omega}^\ell \subset \omega_b$, $\ovl{\omega}^\ell \subset \omega^{\ell+1}$, and $\bigcup_{\ell \in \N^*}\omega^\ell = \omega_b$. For any $t \in \R$, we have $\phi_{-t}(T^*\omega_b) = \bigcup_{\ell \in \N^*}\phi_{-t}(T^*\omega^\ell)$, where $\phi_{-t}(T^*\omega^\ell)$ are open subsets of $T^*M$. This reads}
$$K \subset \bigcup_{t \in \R}\bigcup_{\ell \in \N^*}\phi_{-t}(T^*\omega^\ell).$$
Since $K$ is compact, we may hence extract a finite open cover of $K$, that is a finite number of times $t_j$, $j \in \{1, \cdots, J\}$ and of $\omega^\ell$, $\ell \in \{1, \cdots, L\}$ such that 
$$
K \subset \bigcup_{ j  =1}^J  \bigcup_{\ell = 1}^L \phi_{-t_j}(T^*\omega^\ell) .
$$
We then remark that $\omega^\ell \subset \omega^{\ell+1}$, so that, for all $t \in \R$, $ \bigcup_{\ell = 1}^L \phi_{-t}(T^*\omega^\ell)  = \phi_{-t}(T^*\omega^L)$. This finally yields 
$$
K \subset \bigcup_{ j =1}^J\phi_{-t_j}( T^*\omega^L), \quad \text{ with } \omega^L \text{ open such that } \ovl{\omega}^L \subset \omega_b  . 
$$
As $\bigcup_{ j =1}^J\phi_{-t_j}( T^*\omega^L)$ is open, we also have
$$K_\gamma := \supp(\alpha) \cap \{(x, \xi) \in T^*M , 1-\gamma \leq |\xi|_x \leq 1+\gamma\}\subset \bigcup_{ j =1}^J\phi_{-t_j}( T^*\omega^L)
$$
 for $\gamma \in (0,1)$ sufficiently small (fixed from now on).
\vs
Note that, at this point, we have in particular proved that the assumption of the Lemma, satisfied by the set $\omega_b$ in arbitrarily large time (depending on the point $\rho$), is actually satisfied by the smaller set $\omega^L$ in a uniform time (namely $\max_{1\leq i,j\leq J}|t_j -t_i|$).
\vs 
{Since $\ovl{\omega}^L$ is a compact subset of $M$ such that 
$$\ovl{\omega}^L \subset \omega_b=\bigcup \left\{U\subset M, U \text{ open}, \essinf_U(b) >0 \right\},$$
we can extract a finite cover of $\ovl{\omega}^L$:
$$\ovl{\omega}^L \subset\bigcup_{k=1}^\kappa \left\{U_k \subset M, U_k \text{ open}, \essinf_{U_k}(b) >0 \right\} .$$
Hence, we have
$$
b \geq \min\{ \essinf_{U_k}(b) ,1\leq  k \leq \kappa\} >0 , \quad \text{a.e. on } \ovl{\omega}^L . 
$$
Now we define $\tilde{b} = \eps \chi$ with $\chi \in \Cinfc(\omega_b; [0,1])$ such that $\chi =1$ on $\omega^L$ and $\eps = \min\{ \essinf_{U_k}(b) ,1\leq  k \leq \kappa\}$ so that we have
$$\tilde{b}\in \Cinf(M), \quad \omega^L \subset \{\tilde{b}>0\}, \quad 0\leq \tilde{b}  \leq b . 
$$ }
This yields in particular
 \begin{align}
\label{geomeq:1-chi0-b}
\lambda \langle \tilde{b} u,u \rangle_{L^2(M)} \lesssim \lambda \langle b u ,u\rangle_{L^2(M)} 
= \Im\langle P_\lambda u, u\rangle_{L^2(M)} 
\leq \| P_\lambda u\|_{L^2(M)} \| u \|_{L^2(M)} .
\end{align}
\paragraph{\bfseries Definition of the multipliers.}
Denoting by $O_j  = \phi_{-t_j}( T^* \omega^L) $, the sets $(O_j)_{j \in \{1, \cdots, J\}}$ form a finite open cover of $K_\gamma$ such that $\phi_{t_j}(O_j) \subset T^*\omega^L$. We denote $(\chi_j)_{j \in \{1, \cdots, J\}}$ a partition of unity of $K_\gamma$ subordinated to $(O_j)_{j \in \{1, \cdots, J\}}$. 
We  set $$\psi_j (x, \xi) = - \int_0^{t_j}\chi_j \circ \phi_\tau (x, \xi) d\tau.$$ Taking $\chi \in \Cinfc(\R ;[0,1])$ such that $\chi =1$ a \nhd of $1$ and $\supp\chi \subset (1-\gamma, 1+\gamma)$, we now define the multipliers 
$$
m_j  (x, \frac{\xi}{\lambda})= \chi(\frac{|\xi|_x^2}{\lambda^2}) \psi_j (x , \frac{\xi}{\lambda}), \quad j \in \{1, \cdots ,J\}.
$$
To $m_j$, we associate an operator $\Op (m_j)$, bounded 
on $L^2(M)$ (see Appendix~\ref{s:pseudodiff}).
  \paragraph{\bfseries Estimate in the propagative region.}
We have,  with $P_{\lambda}$ defined  in \eqref{plambda},
\begin{align}
\label{geomeq: multiplier lambda}
2 \Re \langle P_\lambda  u , i \Op (m_j) u \rangle_{L^2(M)} &= \left<(\left[ (-\Delta -\lambda^2) ,i \Op (m_j) \right] u , u \right>_{L^2(M)} + 2 \Re \left< \lambda b u , \Op (m_j) u \right>_{L^2(M)} \nonumber \\
&= \re \left<\Op \left(\left\{ |\xi|_x^2-\lambda^2 , m_j \right\}\right) u , u \right>_{L^2(M)} + O(1)\|u\|_{L^2(M)}^2 \nonumber \\
& \qquad + 2 \lambda \Re \left< b u , \Op (m_j) u \right>_{L^2(M)} ,
\end{align}
according to symbolic calculus.
Moreover, we have 
\begin{align*}
\left\{ |\xi|_x^2-\lambda^2 , m_j \right\} &= H_p m_j = \chi(\frac{|\xi|_x^2}{\lambda^2}) H_p\left( \psi_j (x,  \frac{\xi}{\lambda}) \right)\\
&= - \chi(\frac{|\xi|_x^2}{\lambda^2}) \int_0^{t_j} H_p\left( \chi_j \circ \phi_\tau (x, \frac{\xi}{\lambda}) \right) d\tau\\
&= - \chi(\frac{|\xi|_x^2}{\lambda^2}) \int_0^{t_j} \lambda \frac{d}{d\tau}\left( \chi_j \circ \phi_\tau (x, \frac{\xi}{\lambda}) \right) d\tau\\
&= \lambda \chi(\frac{|\xi|_x^2}{\lambda^2}) \left(\chi_j (x, \frac{\xi}{\lambda}) - \chi_j  \circ \phi_{t_j}(x, \frac{\xi}{\lambda}) \right),
\end{align*}
as $\dot{\phi}_\tau= H_p(\phi_\tau)$.
\vs
Coming back to~\eqref{geomeq: multiplier lambda}, and using the boundedness of $\Op (m_j)$, we obtain
\begin{multline}
\label{geomeq:propagative-1-micro}
\|P_\lambda  u \|_{L^2(M)} \| u \|_{L^2(M)} + \| u \|_{L^2(M)}^2 + \lambda \|b u \|_{L^2(M)} \|u  \|_{L^2(M)}
\\
\gtrsim \lambda \left< \Op \bigl(\chi(\frac{|\xi|_x^2}{\lambda^2}) \chi_j (x, \frac{\xi}{\lambda}) \bigr) u, u \right>_{L^2(M)}
- \lambda \left< \Op \left(\chi(\frac{|\xi|_x^2}{\lambda^2})  \chi_j \circ \phi_{t_j}(x, \frac{\xi}{\lambda}) \right) u, u \right>_{L^2(M)} .
\end{multline}
Next, by construction, the function $(x,\xi) \mapsto \chi(|\xi|_x)  \chi_j \circ \phi_{t_j}(x, \xi)$ is supported in $T^*\omega^L$ where $\tilde{b}>0$, so that we have 
$$
\chi(\frac{|\xi|_x^2}{\lambda^2})  \chi_j \circ \phi_{t_j}(x, \frac{\xi}{\lambda}) \lesssim \tilde{b}(x)
$$
uniformly on $T^*M$.
According to the sharp G{\aa}rding inequality, this yields
\begin{align*}
\lambda \left< \Op \left(\chi(\frac{|\xi|_x^2}{\lambda^2})  \chi_j \circ \phi_{t_j}(x, \frac{\xi}{\lambda}) \right) u, u \right>_{L^2(M)}&
 \lesssim \lambda \left<\tilde{b}u,u\right>_{L^2(M)} + \| u\|_{L^2(M)}^2 \\
 & \lesssim \| P_\lambda u\|_{L^2(M)} \| u \|_{L^2(M)} + \| u\|_{L^2(M)}^2 , 
\end{align*}
when using~\eqref{geomeq:1-chi0-b}. Combined with~\eqref{geomeq:propagative-1-micro} and 
$$
\lambda \|b u \|_{L^2(M)}^2 \lesssim \lambda \left< b u ,u \right>_{L^2(M)} 
\leq \| P_\lambda u\|_{L^2(M)} \| u \|_{L^2(M)} ,
$$
this implies, for all $j \in \{1, \cdots, J\}$, 
\begin{multline}
\label{geomeq:propagative-1-micro-bis}
\|P_\lambda  u \|_{L^2(M)} \| u \|_{L^2(M)} + \| u \|_{L^2(M)}^2 + \lambda^\frac12 \|P_\lambda  u \|_{L^2(M)}^\frac12 \|u  \|_{L^2(M)}^\frac32
\\
\gtrsim \lambda \left< \Op \left(\chi(\frac{|\xi|_x^2}{\lambda^2}) \chi_j(x, \frac{\xi}{\lambda}) \right) u, u \right>_{L^2(M)} .
\end{multline}
\paragraph{\bfseries Estimate in the elliptic region.}
Next, we estimate $$\Op (1- \chi(\frac{|\xi|_x^2}{\lambda^2})) u.$$ 
Take $\theta\in \Cinf(\R^+ ;[-1,1])$ such that $\theta = 0$ in a \nhd of $1$, $\theta = -1$ in a \nhd of $0$ and $\theta = 1$ ouside of a \nhd of $[0,1]$, so that $|\theta| = 1- \chi$.
We compute 
\begin{multline}
\label{eq: remainder elliptic}
\re\left< P_\lambda u , \Op \left(\theta(\frac{|\xi|_x^2}{\lambda^2})\right)u\right>_{L^2(M)}
=\Re\left< (-\Delta_g - \lambda^2)u , \Op \left(\theta(\frac{|\xi|_x^2}{\lambda^2})\right)u\right>_{L^2(M)}
\\ + \re\left< i \lambda b  u , \Op\left(\theta(\frac{|\xi|_x^2}{\lambda^2}) \right) u\right>_{L^2(M)}  .
\end{multline}
Then, using symbolic calculus, we have 
\begin{align*}
&\left( \Op \left(\theta(\frac{|\xi|_x^2}{\lambda^2})\right)\right)^* (-\Delta_g - \lambda^2) 
\\& \hs\hs= \Op\big(\theta(\frac{|\xi|_x^2}{\lambda^2})  (|\xi|_x^{2}-\lambda^{2})\big) + O_{\calL(L^2(M))}(\lambda) + O_{\calL(H^1 (M);L^2(M))}(1)\\
 & \hs\hs= \Op \big( |\theta(\frac{|\xi|_x^2}{\lambda^2}) | (|\xi|_x^{2} +\lambda^{2})\big) + O_{\calL(L^2(M))}(\lambda) + O_{\calL(H^1 (M);L^2(M))}(1),
\end{align*}
so that the sharp G{\aa}rding inequality yields 
\begin{align}
\label{e:gardingH1L2rem}
\Re \left< (-\Delta_g - \lambda^2)u , \Op \left(\theta(\frac{|\xi|_x^2}{\lambda^2}) \right)u\right>_{L^2(M)} & \gtrsim \left< \Op \big( |\theta(\frac{|\xi|_x^2}{\lambda^2}) | (|\xi|_x^{2} +\lambda^{2})\big)u , u \right>_{L^2(M)}  - O(\lambda) \|u  \|_{L^2(M)}^2 \nonumber\\
& \qquad - O(1) \|u  \|_{H^\frac12(M)}^2 - O(1)  \|u  \|_{H^1 (M)} \|u  \|_{L^2 (M)} \nonumber\\
& \gtrsim \lambda^{2} \left< \Op \big(1- \chi(\frac{|\xi|_x^2}{\lambda^2}) \big) u , u \right>_{L^2(M)}  - O(\lambda) \|u  \|_{L^2(M)}^2 \nonumber\\
& \qquad - O(1)  \|u  \|_{H^1 (M)} \|u  \|_{L^2 (M)}, 
\end{align}
since $\|u  \|_{H^\frac12 (M)}^2 \leq \|u  \|_{H^1 (M)} \|u  \|_{L^2 (M)}$.
To estimate $\|u  \|_{H^1 (M)} \|u  \|_{L^2 (M)}$, we simply write
\begin{align*}
\|P_\lambda  u \|_{L^2(M)} \| u \|_{L^2(M)} \geq \Re \left< P_\lambda  u,  u \right>_{L^2(M)} 
&  = \left< (-\Delta_g - \lambda^2)u,  u \right>_{L^2(M)} \\
&  = \|\nabla_g u\|_{L^2(M)}^2 - \lambda^2\| u \|_{L^2(M)}^2 ,
\end{align*}
so that
$$
 \|u  \|_{H^1 (M)} \|u  \|_{L^2 (M)} \lesssim \lambda \| u \|_{L^2(M)}^2 +  \|P_\lambda  u \|_{L^2(M)}^\frac12 \| u \|_{L^2(M)}^\frac32 .
$$ 
Coming back to~\eqref{e:gardingH1L2rem}, this yields 
 \begin{multline*}
\left< (-\Delta_g - \lambda^2)u , \Op \left( \theta(\frac{|\xi|_x^2}{\lambda^2}) \right)u\right>_{L^2(M)} + \lambda \|u  \|_{L^2(M)}^2 + \|P_\lambda  u \|_{L^2(M)} \| u \|_{L^2(M)} 
\\
\gtrsim \lambda^{2} \left< \Op \big(1- \chi(\frac{|\xi|_x^2}{\lambda^2}) \big) u , u \right>_{L^2(M)}  . 
\end{multline*}
Moreover, as above, we have
\begin{align*}
\left|\re\left< i \lambda b  u , \Op \left( \theta(\frac{|\xi|_x^2}{\lambda^2}) \right) u\right> \right| 
&\lesssim \lambda \|b  u\|_{L^2(M)} \| u\|_{L^2(M)} \lesssim \lambda^\frac12 \|P_\lambda  u \|_{L^2(M)}^\frac12 \|u  \|_{L^2(M)}^\frac32 \\
&\lesssim  \|P_\lambda  u \|_{L^2(M)}\|u  \|_{L^2(M)} + \lambda \|u  \|_{L^2(M)}^2   . 
\end{align*}
Coming back to~\eqref{eq: remainder elliptic}, this implies
\begin{align*}
\|P_\lambda  u \|_{L^2(M)} \| u \|_{L^2(M)} + \lambda \| u \|_{L^2(M)}^2 
\gtrsim \lambda^{2} \left< \Op \big(1- \chi(\frac{|\xi|_x^2}{\lambda^2}) \big) u , u \right>_{L^2(M)}  . 
\end{align*}
Dividing this estimate by $\lambda$, we finally obtain
\begin{align}
\label{geomeq:elliptic-1-micro}
  \lambda^{-1} \|P_\lambda  u \|_{L^2(M)} \| u \|_{L^2(M)}  +  \| u \|_{L^2(M)}^2 
\gtrsim \lambda \left<\Op  \big(1- \chi(\frac{|\xi|_x^2}{\lambda^2}) \big) u , u \right>_{L^2(M)} .
\end{align}
\paragraph{\bfseries Patching estimates together.}
Finally, according to the construction of $\chi_j , \chi$ and denoting 
$$
\Omega := \bigg\{(x,\xi) \in T^*M , \quad  \bigl(1- \chi(|\xi|_x^2)\bigr) +  \chi(|\xi|_x^2) \sum_{j=1}^J \chi_j (x, \xi ) > 0 \bigg\},
$$
we have $\supp(\alpha) \subset \Omega$. Let $\beta \in \Cinf(T^*M)$ be a function which is homogeneous of degree zero in each fiber for $|\xi|_x$ large, such that $\supp(\beta) \subset \Omega$ and $\beta = 1$ on a \nhd of $\supp(\alpha)$. We have 
$$
 \bigl(1- \chi(|\xi|_x^2)\bigr) + \chi(|\xi|_x^2) \sum_{j=1}^J \chi_j (x, \xi ) \gtrsim \beta^2 (x, \xi),
$$
uniformly on $T^*M$.
As a consequence, according to the sharp G{\aa}rding inequality, we have
\begin{multline*}
\lambda \bigg< \Op \Big(\chi(\frac{|\xi|_x^2}{\lambda^2}) \sum_{j=1}^J \chi_j (x, \frac{\xi}{\lambda})  + \bigl(1- \chi(\frac{|\xi|_x^2}{\lambda^2})\bigr) \Big) u, u \bigg>_{L^2(M)} 
\\\gtrsim\lambda \left\| \Op \bigl( \beta(x, \frac{\xi}{\lambda}) \bigr) u \right\|_{L^2(M)}^2- C \| u \|_{L^2(M)}^2 .
\end{multline*}
Using inequalities~\eqref{geomeq:propagative-1-micro-bis} and~\eqref{geomeq:elliptic-1-micro} yields 
\begin{align*}
\lambda \left\| \Op \bigl( \beta(x, \frac{\xi}{\lambda}) \bigr) u \right\|_{L^2(M)}^2 & \lesssim
\|P_\lambda  u \|_{L^2(M)} \| u \|_{L^2(M)} + \| u \|_{L^2(M)}^2 + \lambda^\frac12 \|P_\lambda  u \|_{L^2(M)}^\frac12 \|u  \|_{L^2(M)}^\frac32.
\end{align*}
Applying this estimate to $u$ replaced by $$A u := \Op \big( \alpha(x, \frac{\xi}{\lambda}) \big) u$$ and using that 
$$ \Op \bigl( \beta(x, \frac{\xi}{\lambda}) \bigr) \Op \bigl( \alpha(x, \frac{\xi}{\lambda})\bigr)  = \Op \bigl( \alpha(x, \frac{\xi}{\lambda}) \bigr) + O_{\calL(L^2(M))}(\lambda^{-1}),$$ 
according to proposition~\ref{p:suppdisj}, we obtain
\begin{align*}
\lambda \left\| A u \right\|_{L^2(M)}^2 & \lesssim
\|P_\lambda A  u \|_{L^2(M)} \| A u \|_{L^2(M)} + \| A u \|_{L^2(M)}^2 \\
& \hskip55pt+ \lambda^\frac12 \|P_\lambda A  u \|_{L^2(M)}^\frac12 \| A u  \|_{L^2(M)}^\frac32 + O(\lambda^{-1}) \| u  \|_{L^2(M)}^2 \\
 & \lesssim
(1 + \eps^{-1})\|P_\lambda A u \|_{L^2(M)} \| A u \|_{L^2(M)} +(1  +\eps \lambda ) \| A u \|_{L^2(M)}^2 + O(\lambda^{-1}) \| u  \|_{L^2(M)}^2 
\end{align*}
for all $\eps>0$. Choosing $\eps$ sufficiently small, yields
\begin{align*}
\lambda \left\| A u \right\|_{L^2(M)}^2 & \lesssim
\|P_\lambda A  u \|_{L^2(M)} \| A u \|_{L^2(M)} + \| A u \|_{L^2(M)}^2 +  O(\lambda^{-1}) \| u  \|_{L^2(M)}^2 \\
 & \lesssim
(\eps \lambda)^{-1}\|P_\lambda A u \|_{L^2(M)}^2 + (1 + \eps \lambda) \| A u \|_{L^2(M)}^2 + O(\lambda^{-1}) \| u  \|_{L^2(M)}^2 .
\end{align*}
for all $\eps>0$. Taking $\eps$ sufficiently small, and then $\lambda \geq \lambda_0$ for $\lambda_0$ large enough  provides the proof of the lemma.
\end{proof}
\subsection{End of the proof of Theorem~\ref{th: resolvent estimate}}
\label{sec:patchingestimates}
{
Next, we want to patch estimates together to complete  the proof of Theorem~\ref{th: resolvent estimate}.
\begin{proof}[Proof of Theorem~\ref{th: resolvent estimate}]
Let $U_{\eps_0} = B(y',\eps_0) \times \T^{n''} \subset M$ and $\chi_0 \in \Cinfc(B(y',\eps_0 );[0,1])$ such that $\chi_0=1$ in the \nhd of $y'$. We shall also write $\chi_0$ instead of $\chi_0 \otimes 1 \in \Cinfc(U_{\eps_0})$, and instead of $\chi_0 \otimes 1 \in \Cinf(M)$ where this function has been extended by $0$ in $M \setminus U_{\eps_0}$. We denote by $\chi_1 = 1-\chi_0 \in \Cinf(M)$. 

On the one hand, as a consequence of Lemma~\ref{lem: geom control} and Assumption~\eqref{eq: GCC outside}, we have for $\lambda \geq \lambda_0$,
\begin{equation}
\label{eq estimee GCC}
O(\lambda^{-1}) \| u\|_{L^2(M)} +\|P_\lambda \chi_1 u\|_{L^2(M)} \gtrsim  \lambda \|\chi_1 u\|_{L^2(M)}.
\end{equation}
On the other hand, we have
\begin{equation*}
\|P_\lambda  \chi_0 u\|_{L^2(M)}^2 = \|P_\lambda  \chi_0 u\|_{L^2(U_{\eps_0})}^2.
\end{equation*}
\vs\no
{\bf Notation: }in $U_{\eps_0}$, we note the coordinates $(x', x'')$ with $x' = (x_1' , \cdots ,x'_{n'})\in \R^{n'}$ and $x'' = (x_1'' , \cdots ,x''_{n''})\in \R^{n''}$.
\vs\no
{According to Assumption~\eqref{eq: homo gamma++}, we extend (for instance by homogeneity in the variable $x'-y'$ outside $B_{\R^{n'}}(y',\eps_0)$) the function $b= b(x')$ from $B_{\R^{n'}}(y',\eps_0)$ to the whole $\R^{n'}$ as a measurable function $W$ satisfying
$$
C_1^{-1} |x'-y'|^{2\gamma} \leq W(x') \leq C_1 | x' -y'|^{2\gamma}  , \quad x' \in \R^{n'} 
 ,\quad \text{and} \quad  b = W \text{ on }B(y',\eps_0) .
$$}
Hence, we have 
$$
P_\lambda  \chi_0 = ( -\Delta_{\R^{n'}} -\Delta_{\T^{n''}} - \lambda^2 + i \lambda  W(x'))\circ \chi_0
$$
As a consequence, we have
\begin{align*}
\|P_\lambda  \chi_0 u\|_{L^2(M)}^2 
& = \|(-\Delta_{\R^{n'}}-\Delta_{\T^{n''}} - \lambda^2 + i \lambda W (x'))( \chi_0 u)\|_{L^2(\R^{n'} \times \T^{n''})}^2  \\
& = \sum_{k \in \Z} \|(-\Delta_{\R^{n'}} + |k|^2 - \lambda^2 + i \lambda W (x'))( \chi_0(x') \hat{u}_k)\|_{L^2(\R^{n'})}^2 ,
\end{align*}
where we have denoted by $\hat{u}_k(x') =\frac{1}{(2\pi)^{n''}}\int_{\T^{n''}} u(x',x'') e^{- i k \cdot x''}dx''$, $k \in \Z^{n''}$ the partial Fourier transform in the periodic directions.
\par
Next, we apply  Lemma~\ref{lem: P lambda omega} to the operator $-\Delta_{\R^{n'}}  - (\lambda^2-|k|^2) + i \lambda W (x')$ in $\R^{n'}$ for any $k \in \Z^{n''}$. This yields, for some $C>0$,  
\begin{multline}
\label{eq: fourier series}
C\|P_\lambda  \chi_0 u\|_{L^2(M)}^2 
 \ge  \lambda^{\frac{2}{\gamma+1}}\sum_{\lambda^2-|k|^2 \leq 0} \| \chi_0(x') \hat{u}_k\|_{L^2(\R^{n'})}^2  \\
+  \lambda^{\frac{2}{\gamma+1}}\sum_{\lambda^2-|k|^2 > 0}\left( 1+ \left(\frac{\lambda^2-|k|^2}{\lambda^{\frac{1}{\gamma +1}}}\right)^{\frac{2\gamma}{2\gamma + 1}} \right) \| \chi_0(x') \hat{u}_k\|_{L^2(\R^{n'})}^2.
\end{multline}
We now want to estimate the remainder term
\begin{multline}
\label{eq: commute tronc+}
\|[P_\lambda ,\chi_1] u\|_{L^2(M)}
= 
\|[P_\lambda ,\chi_0] u\|_{L^2(M)}
 =\|[-\Delta_{\R^{n'}} ,\chi_0(x')] u\|_{L^2(\R^{n'} \times \T^{n''})}
 \\
\leq \|(\Delta_{\R^{n'}}\chi_0 ) u\|_{L^2(\R^{n'} \times \T^{n''})}
 +2 \|\nabla_{x'}\chi_0 \cdot \nabla_{x'} u\|_{L^2(\R^{n'} \times \T^{n''})}
\end{multline}
To estimate this remainder, we take now $\psi = \psi(x')= \psi \otimes 1 \in \Cinfc(U_{\eps_0})$ such that $\psi = 1$ on $\supp(\nabla_{x'}\chi)$.
We compute
\begin{align*}
\re \langle P_\lambda u , \psi^2 u\rangle_{L^2(M)}
 &= \re \langle(-\Delta - \lambda^2) u , \psi^2 u\rangle_{L^2(M)} 
+ \re\langle i \lambda \psi^2 b u ,  u\rangle_{L^2(M)} 
\\&= \re \langle-\Delta_{\R^{n'}}u , \psi^2 u \rangle_{L^2(\R^{n'} \times \T^{n''})} 
+ \langle \psi^2 \nabla_{x''}u , \nabla_{x''} u\rangle_{L^2(\R^{n'} \times \T^{n''})} \\
&\hskip150pt- \lambda^2 \langle \psi^2 u ,  u\rangle_{L^2(\R^{n'} \times \T^{n''})}.
\end{align*}
Moreover, we have
\begin{align*}
&\re \langle -\Delta_{\R^{n'}}u , \psi^2 u \rangle_{L^2(\R^{n'} \times \T^{n''})} 
\\&= \langle\nabla_{x'}u , \psi^2\nabla_{x'} u\rangle_{L^2(\R^{n'} \times \T^{n''})} 
+ \re \langle\nabla_{x'}u , u \nabla_{x'} \psi^2\rangle_{L^2(\R^{n'} \times \T^{n''})} \\
&= \langle\nabla_{x'}u , \psi^2\nabla_{x'} u\rangle_{L^2(\R^{n'} \times \T^{n''})} 
+ \sum_{j = 1}^{n'}\re\langle i D_{x_j}u , u \d_{x_j} \psi^2\rangle_{L^2(\R^{n'} \times \T^{n''})} \\
&= \langle\nabla_{x'}u , \psi^2\nabla_{x'} u\rangle_{L^2(\R^{n'} \times \T^{n''})} 
+ \sum_{j = 1}^{n'}\frac{i}{2} \langle [D_{x_j}, \d_{x_j} \psi^2]u , u \rangle_{L^2(\R^{n'} \times \T^{n''})} \\
&= \langle\nabla_{x'}u , \psi^2\nabla_{x'} u\rangle_{L^2(\R^{n'} \times \T^{n''})} 
+  \frac12\langle (\Delta_{\R^{n'}} \psi^2) u , u \rangle_{L^2(\R^{n'} \times \T^{n''})} ,
\end{align*}
when using that the two operators $D_{x_j}$ and $\d_{x_j} \psi^2$ are selfadjoint.
Now, we write 
$$
\langle \psi^2 \nabla_{x''}u , \nabla_{x''}u\rangle_{L^2(\R^{n'} \times \T^{n''})} 
- \lambda^2 \langle\psi^2 u ,  u\rangle_{L^2(\R^{n'} \times \T^{n''})} 
= \sum_{k \in \Z}(  |k|^2 - \lambda^2 )\| \psi(x') \hat{u}_k\|_{L^2(\R^{n'})}^2 ,
$$ 
and come back to \eqref{eq: estimate tronc} to obtain
\begin{multline*}
\| \psi \nabla_{x'} u\|_{L^2(\R^{n'} \times \T^{n''})}^2 \\\leq
\|P_\lambda u \|_{L^2(M)} \| u\|_{L^2(M)} + C \| u\|_{L^2(M)}^2 
+  \sum_{ \lambda^2- |k|^2 > 0}(  \lambda^2- |k|^2 )\| \psi(x') \hat{u}_k\|_{L^2(\R^{n'})}^2.
\end{multline*}
As a consequence, we estimate the commutator of \eqref{eq: commute tronc} by
\begin{multline}
\label{eq: comm+}
\|[P_\lambda ,\chi_0] u\|_{L^2(M)}^2\leq c_0 \Big( 
\|P_\lambda u \|_{L^2(M)} \| u\|_{L^2(M)} + \| u\|_{L^2(M)}^2 \\
+  \sum_{ \lambda^2- |k|^2 > 0}(  \lambda^2- |k|^2 )\| \psi(x') \hat{u}_k\|_{L^2(\R^{n'})}^2 \Big).
\end{multline}
Now, we have
\begin{equation}
\|P_\lambda u \|_{L^2(M)}^2 
\gtrsim \|P_\lambda \chi_0 u \|_{L^2(M)}^2 + \|P_\lambda \chi_1 u \|_{L^2(M)}^2 
- C \|[P_\lambda ,\chi_0] u\|_{L^2(M)}^2 ,
\end{equation}
which, combined with the estimates,~\eqref{eq estimee GCC}~\eqref{eq: fourier series} and~\eqref{eq: comm}, yields
\begin{align*}
\|P_\lambda u\|_{L^2(M)}^2 + \|u\|_{L^2(M)}^2
&  \gtrsim \lambda^2 \|\chi_1 u\|_{L^2(M)}^2 +\lambda^{\frac{2}{\gamma+1}}\sum_{\lambda^2-|k|^2 \leq 0} \| \chi_0(x') \hat{u}_k\|_{L^2(\R^{n'})}^2  \\
& +  \lambda^{\frac{2}{\gamma+1}}\sum_{\lambda^2-|k|^2 > 0}\left( 1+ \left(\frac{\lambda^2-|k|^2}{\lambda^{\frac{1}{\gamma +1}}}\right)^{\frac{2\gamma}{2\gamma + 1}} \right) \| \chi_0(x') \hat{u}_k\|_{L^2(\R^{n'})}^2\\
& - c_0 \sum_{ \lambda^2- |k|^2 > 0}(  \lambda^2- |k|^2 )\| \psi(x') \hat{u}_k\|_{L^2(\R^{n'})}^2 .
\end{align*}
Next, we remark that the cutoff functions are chosen so that, for all $v \in L^2(\Mp)$ we have
$$
\| \psi v\|_{L^2(\Mp)}^2 \lesssim \| (\chi_0 + \chi_1) v\|_{L^2(\Mp)}^2
\lesssim  \| \chi_0 v\|_{L^2(\Mp)}^2+\| \chi_1 v\|_{L^2(\Mp)}^2 .
$$
We hence obtain
\begin{align*}
\|P_\lambda u\|_{L^2(M)}^2 + \|u\|_{L^2(M)}^2
&  \gtrsim \lambda^2 \|\chi_1 u\|_{L^2(M)}^2 + \lambda^{\frac{2}{\gamma+1}}\sum_{\lambda^2-|k|^2 \leq 0} \| \chi_0 \hat{u}_k\|_{L^2(\Mp)}^2 +\| \chi_1 \hat{u}_k\|_{L^2(\Mp)}^2  \\
& +  \lambda^{\frac{2}{\gamma+1}}\sum_{\lambda^2-|k|^2 > 0} f(\lambda, \lambda^2 - |k|^2) \| (\chi_0 + \chi_1) \hat{u}_k \|_{L^2(\Mp)}^2 ,
\end{align*}
where $f(\lambda, \omega)$ is defined in~\eqref{eq: f(lambda,omega)}. According to Lemma~\ref{lem: f positive}, there exists $\delta >0$ such that for all $\lambda >0$ and $\omega \in [0, \delta \lambda^2]$, we have $f(\lambda, \omega) \geq 1$. Moreover, we always have $\lambda^{\frac{2}{\gamma+1}}f(\lambda, \omega) \geq - c_0 \omega \geq -c_0 \lambda^2$. As a consequence, we have
\begin{align}
\label{eq: estim lambda2reste}
\|P_\lambda u\|_{L^2(M)}^2 + \|u\|_{L^2(M)}^2
&  \gtrsim 
\lambda^{\frac{2}{\gamma+1}}\sum_{\lambda^2-|k|^2 \leq \delta \lambda^2} \| \hat{u}_k\|_{L^2(\Mp)}^2 
 -  c_0 \lambda^2 \sum_{\delta \lambda^2 <\lambda^2 - |k|^2 \leq \lambda^2} \| \hat{u}_k \|_{L^2(\Mp)}^2 ,
\end{align}
Next, we study the last term in this estimate. The range $\delta \lambda^2 <\lambda^2 - |k|^2 \leq \lambda^2$ may be rewritten as $\left|\frac{k}{\lambda}\right|^2 \leq (1-\delta)$. Take $\chi \in \Cinfc(\R)$ such that $\chi = 1$ in a \nhd of $[-(1-\delta) , (1-\delta) ]$ and $\chi = 0$ in a \nhd of $(-\infty , -1]\cup[1 , + \infty)$. We then have
\begin{align}
\label{eq: mult fourier}
\sum_{\delta \lambda^2 <\lambda^2 - |k|^2 \leq \lambda^2} \| \hat{u}_k \|_{L^2(\Mp)}^2 \leq \sum_{k \in \Z } \chi^2 (|k|^2/ \lambda^2) \| \hat{u}_k \|_{L^2(\Mp)}^2 =  \| \chi(-\Delta_{\T^{n''}}/\lambda^2) u \|_{L^2(M)}^2 , 
\end{align}
where the last identity is a consequence of Lemma~\ref{lem: pseudo circle}. 
Using~\eqref{e:fourmult}, we moreover have 
\begin{equation}
\label{e:multquantiz}
\| \chi(-\Delta_{\T^{n''}}/\lambda^2) u \|_{L^2(M)}^2 = \| \Op \left(\chi(|\xi''|^2/\lambda^2) \right) u \|_{L^2(M)}^2 + O(\lambda^{-2}) \|u \|_{L^2(M)}^2 .
\end{equation}
As $\supp(\chi) \subset (-1 , 1)$, Assumption~\eqref{eq: GCC outside} implies that for all $\rho \in \supp(1 \otimes \chi ) \cap S^*M$ there exists $t > 0$ such that $\phi_{t}(\rho) \in T^*\omega_b$. We may hence apply Lemma~\ref{lem: geom control} with $\alpha = 1 \otimes \chi$ to obtain
\begin{align}
\label{eq: mult + propag}
\lambda^2 \| \Op \left( \chi(|\xi''|^2/\lambda^2)\right) u \|_{L^2(M)}^2 \leq C \|P_\lambda \Op \left(\chi(|\xi''|^2/\lambda^2) \right) u \|_{L^2(M)}^2 + O(\lambda^{-1}) \|u \|_{L^2(M)}^2  .
\end{align}
We now have 
\begin{align}
\label{eq: P chi chi P}
P_\lambda \Op \left(\chi(|\xi''|^2/\lambda^2)\right)& = (-\Delta_{\Mp} -\Delta_{\T^{n''}} - \lambda^2 + i \lambda b) \Op \left(\chi(|\xi''|^2/\lambda^2)\right)\nonumber \\
& =  \Op \left(\chi(|\xi''|^2/\lambda^2)\right) P_\lambda+  [-\Delta_{\T^{n''}},  \Op \left(\chi(|\xi''|^2/\lambda^2)\right)]  + i \lambda [b, \Op \left(\chi(|\xi''|^2/\lambda^2)\right)] ,
\end{align}
as $[\Delta_{\Mp}, \Op \left(\chi(|\xi''|^2/\lambda^2)\right) ] = 0$. Next, we have  $[-\Delta_{\T^{n''}}, \Op \left(\chi(|\xi''|^2/\lambda^2)\right) ] \in \Psi^0_{sc}(M)$ as its principal symbol is $\{|\xi''|^2, \chi(|\xi''|^2/\lambda^2)\}=0$.
Finally, we have 
\begin{align}
\label{eq: estim comm b}
\left\| i \lambda [b,  \Op \left( \chi(|\xi''|^2/\lambda^2) \right)] \right\|_{\calL(L^2(M))} \leq C\big( \|b\|_{L^\infty(M)} + \|\nabla_{x''} b\|_{L^\infty(M)} \big) ,
\end{align}
uniformly with respect to $\lambda$ according to~\cite[Chapter~IV, Proposition~7]{CM:78}.
Combining all estimates~\eqref{eq: mult fourier}-\eqref{eq: estim comm b}, we now obtain 
\begin{align*}
\lambda^2 \sum_{\delta \lambda^2 <\lambda^2 - |k|^2 \leq \lambda^2} \| \hat{u}_k \|_{L^2(\Mp)}^2 \lesssim \|P_\lambda u \|_{L^2(M)}^2 + \|u \|_{L^2(M)}^2 .
\end{align*}
Together with~\eqref{eq: estim lambda2reste}, this  now implies 
\begin{align*}
\|P_\lambda u\|_{L^2(M)}^2 + \|u\|_{L^2(M)}^2
&  \gtrsim \lambda^{\frac{2}{\gamma+1}}\sum_{\lambda^2-|k|^2 \leq \delta \lambda^2} \| \hat{u}_k\|_{L^2(\Mp)}^2 
 +  \lambda^2 \sum_{\delta \lambda^2 <\lambda^2 - |k|^2 \leq \lambda^2} \| \hat{u}_k \|_{L^2(\Mp)}^2 \\
 &  \gtrsim \lambda^{\frac{2}{\gamma+1}} \| u\|_{L^2(M)}^2 ,
\end{align*}
which concludes the proof of the theorem when taking $\lambda \geq \lambda_0$ for $\lambda_0$  large enough.
\end{proof}
}
\section{Proof of Theorem~\ref{th: sequence saturation}: the lower bound}
\label{sec:quasimodes}
{
In this section, we prove Theorem~\ref{th: sequence saturation}, \ie we construct a sequence of quasimode that saturates Inequality~\eqref{eq:expected estimate}.
Let $\chi \in \Cinfc(B(0,\eps_0))$ be a real-valued function such that $\chi = 1$ in a \nhd of $0$.
Take $v = (1, 0, \cdots,0) \in \Z^{n''}$, $\lambda =k$ for $k \in \N$ and set
$$
u_k(x', x'') = c_1 k^{\frac{\alpha (n-1)}{2}} \chi(k^\alpha x') \frac{e^{ik v \cdot x''}}{(2\pi)^{\frac{n''}{2}}} = c_1 k^{\frac{\alpha (n-1)}{2}} \chi(k^\alpha x') \frac{e^{ik x_1''}}{(2\pi)^{\frac{n''}{2}}},
$$
where $c_0 = \|\chi\|_{L^2(B(0,\eps_0))}^{-1}$ is chosen so that for any $k \in \N$, we have $\|u_k\|_{L^2(M)} =1$, and $\alpha>0$ is to be fixed later on (in terms of $\gamma$).
Now, we have 
$$
(P_k u_k)(x) =
c_0 k^{\frac{\alpha n'}{2}} \frac{e^{ik x_1''}}{(2\pi)^{\frac{n''}{2}}} \left(- k^{2\alpha} (\Delta_{\Mp} \chi)(k^\alpha x') 
+ i k b(x')\chi(k^\alpha x')\right),
$$
}
and
\begin{align*}
\|P_k u_k\|_{L^2(M)}^2 & = c_0^2 \int_{\Mp} k^{\alpha n'}\left|- k^{2\alpha} (\Delta_{\Mp} \chi)(k^\alpha x') 
+ i k b(x')\chi(k^\alpha x')\right|^2 dx' \\
& = c_0^2 \int_{\R^{n'}}  k^{\alpha n'}\left| k^{2\alpha} (\Delta_{\Mp} \chi)(k^\alpha x') \right|^2 dx'
+ c_0^2 \int_{\R^{n'}}  k^{\alpha n'}\left| k b(x')\chi(k^\alpha x') \right|^2 dx'
\end{align*}
since $\chi$ is real-valued and compactly supported. After a change of variables, this yields
\begin{align*}
\|P_k u_k\|_{L^2(M)}^2 = c_0^2 \int_{\R^{n'}} \left| k^{2\alpha} (\Delta_{\Mp} \chi)(x') \right|^2 dx'
+ c_0^2 \int_{\R^{n'}}  \left| k b\big(\frac{x'}{k^\alpha} \big)\chi( x') \right|^2 dx' .
\end{align*}
{Using then Assumption~\eqref{eq: local homo gamma} on the vanishing rate of $b$ on $\supp(\chi)$, we obtain
\begin{align*}
\|P_k u_k\|_{L^2(M)}^2 & \leq c_0^2 k^{4\alpha} \int_{\R^{n'}} \left|  (\Delta_{\Mp} \chi)(x') \right|^2 dx'
+ c_0^2  \int_{\R^{n'}}  \left| k C_1\left| \frac{x'}{k^\alpha} \right|^{2\gamma}\chi( x') \right|^2 dx' \\
 & \leq c_0^2 k^{4\alpha} \int_{\R^{n'}} \left|  (\Delta_{\Mp} \chi)(x') \right|^2 dx'
+ c_0^2C_1^2  k^{2-4\alpha\gamma} \int_{\R^{n'}}  \left|  |  x' |^{2\gamma}\chi( x') \right|^2 dx' .
\end{align*}
Minimizing the exponent \wrt $\alpha$ gives $\alpha = \frac{1}{2(1+\gamma)}$, and hence
\begin{align*}
\|P_k u_k\|_{L^2(M)}^2 \leq  k^{\frac{2}{1+\gamma}}  c_0^2 \left(  \| (\Delta_{\Mp} \chi)\|_{L^2(\Mp)}^2 + C_1^2 \| |  x' |^{2\gamma}\chi\|_{L^2(\Mp)}^2 \right).
\end{align*}
Finally, we have $\|P_k u_k\|_{L^2(M)} \leq  C_0 k^{\frac{1}{1+\gamma}}$ for some $C_0>0$, which concludes the proof of Theorem~\ref{th: sequence saturation}.
}
\section{Second microlocalization, a key tool for the proof}
\label{sec:2ndmicro}
The proofs above  contain several steps of microlocalizations, i.e. of cutting the phase space 
into pieces, proving the key estimates for symbols supported in these pieces and finally patching together the whole set of inequalities. 
We are willing in this section to (hopefully) illuminate these technicalities by resorting to various concepts related to the so-called second microlocalization.
These notions were first developed in the analytic category 
in  M.~Kashiwara \& T.~Kawai's article \cite{MR579740}, followed by G.~Lebeau's paper \cite{MR786539}.
J.-M.~Bony's article \cite{MR925240} and J.-M. Delort's book \cite{MR1186645}
displayed striking applications to propagation of  weak singularities for non-linear equations,
the J.-M.~Bony \& N.~Lerner's paper   \cite{MR1011988} provided  a metrics point of view.
More recently, 
N.~Anantharaman \& M.~L{\'e}autaud's work on the damped wave equation \cite{AL:13}
showed, using techniques of N.~Anantharaman \& F.~Maci\`a \cite{Macia:10,AM:11}, that the second micolocalization 
could be useful to tackle estimates
related to some non-selfadjoint operators.
{
The key tools in the last three papers are the 2-microlocal measures, introduced by L.~Miller~\cite{Miller:97}, C.~Fermanian-Kammerer and P.~G{\'e}rard~\cite{Ferm:00,FKG:02,Ferm:05}, which allow to perform (at the level of defect measures) a second microlocalization for bounded sequences in $L^2$.

In the present article, we have used a multiplier method, instead of resorting to $2$-microlocal measures: it means that have computed
$$
\poscal{P_{\lambda}u}{\mathcal Mu},
$$
with a carefully chosen {\it multiplier} $\mathcal M$.
That multiplier  operator $\mathcal M$ is in fact a second microlocalization operator 
and cannot be chosen as a standard semi-classical operator.
It is constructed rather explicitly
in the various regions of the phase space.
}
\subsection{First microlocalization arguments and their limitations}
Taking advantage of the fact 
that the 
damping term $b$ in \eqref{eq: stabilization}
does not depend on time\footnote{It would be interesting to explore the case where $b$ depends on time
and to show that a direct energy method should provide essentially the same results, at least when the time-dependence is smooth.},
a Fourier-Laplace transform yields the operator $P_{\lambda}$
\begin{equation}\label{2-jjkk}
 P_{\lambda}=-\Delta_{g}-\lambda^{2}+ib(x)\lambda,
\end{equation}
where $\lambda$ can be considered as a large positive parameter.
This is thus a semi-classical problem where the Planck constant is $h=1/\lambda$.
$P_{\lambda}$
 is a real principal type operator whose principal symbol is 
$
p=\val{\xi}_{x}^{2}-\lambda^{2}
$
so that the characteristic manifold is
\begin{equation}\label{2-6547}
\Char P_\lambda =\{(x,\xi)\in T^{*}M,\ \val{\xi}_{x}^{2}=\lambda^{2}\}.
\end{equation}
We ask the following regularity question: assuming that $P_{\lambda} u$ belongs to the (semi-classical) Sobolev space $H^{s}_{sc}$,
could we have $u\in H^{s+1}_{sc}$ at a point $\gamma_{0}$ of the cotangent bundle?
Of course when $\gamma_{0}$ is non-characteristic, we have the better result $u\in H^{s+2}_{sc}$.
If $\gamma_{0}$ belongs to the characteristic set,
we may combine three pieces of information:
\vs
[1] The point $\gamma_{0}$ belongs to a bicharacteristic curve of $p$ whose endpoint $\gamma_{1}$
belongs to the set where the imaginary part of the subprincipal symbol 
is elliptic.
\par
[2] At $\gamma_{1}$, we have a regularity result, due to the ellipticity
of
the imaginary part of the subprincipal symbol.
\par
[3] The propagation-of-singularities theorem for real-principal type operators
shows that the regularity at $\gamma_{1}$ propagates down to $\gamma_{0}$.
\vs
For condition [1] to be fulfilled,
we need the hypothesis that the point $\gamma_{0}$ is connected by the bicharacteristic flow to the set where $b$ is positive.
The so-called {\it Geometric Control Condition}
requires that this should be true for any point $\gamma_{0}$ in the characteristic set,
providing then a (semi-classical)
regularity result.
We may thus introduce the {\it singular set}
\begin{equation}\label{}
S_\lambda =\{(x,\xi)\in \Char P_\lambda, \ \forall  t\in \R,\ b(\phi_{t}(x,\xi))=0\},
\end{equation}
(here $\phi_{t}$ is the bicharacteristic flow)
and try to understand what will happen if $S_{\lambda}$ is not empty.
{
Let us describe the model situation in which we are interested: the manifold and the operator are simply $M=\R^n = \R^{n'} \times \R^{n''}$ and  
\begin{equation}\label{2-65az}
P_{\lambda}=\op{\val{\xi'}^{2}+\val{\xi''}^{2}-\lambda^{2}+i\lambda \val{x'}^{2\gamma}} .
\end{equation}
The characteristic manifold has the equation 
$$
\val{\xi'}^{2}+\val{\xi''}^{2}=\lambda^{2}
$$
and on the characteristic curves
$
\dot{x'}=2\xi',\ \dot x''=2\xi'',\ \xi'=\text{constant}, \ \xi''=\text{constant}.
$
We get that $(x',x'',\xi', \xi'')\in S_\lambda$ iff
$
\xi'=0, \val{\xi''}^{2}=\lambda^{2}, x'=0.
$
We see that $S_{\lambda}$ is a $(2n'' - 1)$-dimensional submanifold of the symplectic
$\R^{n'}_{x'}\times\R_{x''}^{n''}\times\R^{n'}_{\xi'}\times\R_{\xi''}^{n''}$
given by
$$
S_{\lambda}=\{0_{\R^{n'}}\}\times\R_{x''}^{n''}\times\{0_{\R^{n'}}\}\times\{\xi'' \in \R^{n''} , \val{\xi''}=\lambda\} .
$$
To be 1-microlocally away from $S_{\lambda}$ would mean that for some $\epsilon_{0}\in (0,1)$
$$
\val{\xi'}^{2}+\val{\xi''}^{2}=\lambda^{2},\quad \val{\xi''}^{2}\le (1-\epsilon_{0})\lambda^{2}, \ \val{\xi'}^{2}\ge \epsilon_{0}\lambda^{2},
$$
so that the (GCC)
 condition holds even if $x'=0$. Of course if $\val{x'}>\epsilon_{0}$, the (GCC) condition holds. 
 \vs
 We are left 
 with a \nhd of  the set $S_{\lambda}$
 and we shall not be able to  take advantage of the particular behavior of $b$
 if we do not make  further localization in the phase space.
 \subsection{Reviewing our estimates}\label{sec.77}
 Let us quickly review  our arguments for the various estimates proven for $P_{\lambda}$ given by 
 \eqref{2-65az}.
 We set $\val{\xi'}^{2}+\val{\xi''}^{2}=\lambda^{2}$.
 \vs\no {\bf [1] 
$\val{\xi'}^{2}\gtrsim \val{\xi''}^{2}$ or $\val{x'}\gtrsim 1$}: this is the (GCC) region
since the first condition implies that the characteristic curve starting at $x'=0$ enters at once the damping set, and the second condition requires to start within the damping set.
\vs\no {\bf [2] 
$\val{\xi'}^{2}\ll \val{\xi''}^{2}$ and $\val{x'}\ll1$}: this is where we need further localization.
\vs
 \begin{itemize}
 \item[]{\bf  [2.1]}
$\val{\xi'}^{2}\ll \lambda^{\frac1{\gamma+1}}$ and $\val{x'}^{2}\ll\lambda^{-\frac1{\gamma+1}}$: a tiny piece of the phase space, with volume 1 though, thus compatible with the uncertainty principle. We use a pseudo-spectral estimate for the operator
$\val{D'}^{2}+i\lambda\val{x'}^{2\gamma}$, that is an estimate of type
$$
\norm{\val{D'}^{2}u+i\lambda\val{x'}^{2\gamma}u}\gtrsim \norm{u}\lambda^{\frac1{\gamma+1}}.
$$
\vs
\item[]
{\bf  [2.2]} $\val{\xi'}^{2}\ll \val{\xi''}^{2}$ and 
$\lambda^{-\frac1{\gamma+1}}\lesssim \val{x'}^{2}\ll1$: there we have
 $\lambda\val{x'}^{2\gamma}\gtrsim \lambda^{\frac1{\gamma+1}}$, we calculate 
 $$\re\poscal{P_{\lambda}u}{iu}=\poscal{\lambda\val{x'}^{2\gamma}u}{u}\gtrsim
 \lambda^{\frac1{\gamma+1}}\norm{u}^{2}.$$
\vs
\item[]{\bf  [2.3]}
$\lambda^{\frac1{\gamma+1}}\lesssim \val{\xi'}^{2}\ll \val{\xi''}^{2} $ and $\val{x'}^{2}\ll\lambda^{-\frac1{\gamma+1}}$: this is the most difficult region, in which we use a propagation estimate.
We study the model
$$
\val{D'}^{2}-\omega^{2}+i\lambda \val{x'}^{2\gamma},\quad \lambda^{\frac1{2\gamma+2}}\le \omega\le \epsilon_{0}\lambda.
$$
\end{itemize}
\begin{figure}[h!]\label{figure}
  \begin{center}
    \input{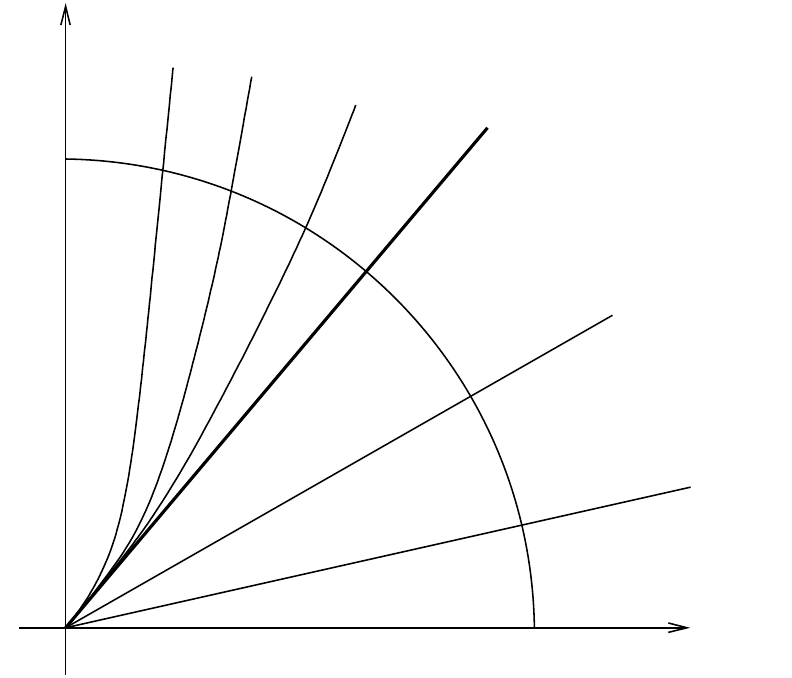_t} 
    \caption{{Classical conical localization and second microlocalization on the singular set.}}
 \end{center}
\end{figure}
The above picture illustrates the transition from region {\bf [1]} to region {\bf [2}] in the frequency variables for small $x'$
and  shows that localization in region {\bf [2]} is no longer conical.
\subsection{Second microlocalization with respect to the singular set}
$\ast$ {The first microlocalization metric on $\R^{n}_{x}\times\R^{n}_{\xi}$
 is}
\begin{equation}\label{}
G_{x,\xi}=\frac{\val{dx}^{2}}{1}+\frac{\val{d\xi}^{2}}{\Lambda(\xi)^{2}},
\text{
with
$\Lambda(\xi)^{2}=\valjp \xi^{2}=1+\val \xi^{2}$.}
\end{equation}
It means that the standard symbols of order $0$
used for this first microlocalization 
are functions $a\in \moo(\RZ)$ such that
$$
\val{(\p_{x}^{\alpha}\p_{\xi}^{\beta} a)(x,\xi)}\le C_{\alpha\beta}\Lambda(\xi)^{-\val \beta}=C_{\alpha\beta}
\valjp{\xi}^{-\val \beta}.
$$
The ``large parameter'' of this calculus is the product of the conjugate axes, {$\Lambda=\Lambda(\xi)$.}
\vs\no
$\ast$
We want to provide a finer localization when we are getting close to  the singular set $S_\lambda$.}
For this purpose, we define the metric
\begin{align}
&g_{x,\xi}=\frac{\val{dx'}^{2}}{\Lambda(\xi)^{-\frac{1}{\gamma+1}}\mu(\xi)^{\frac{1}{\gamma+1}}}
+\frac{\val{d\xi'}^{2}}{\Lambda(\xi)^{\frac{1}{\gamma+1}}\mu(\xi)^{\frac{2\gamma+1}{\gamma+1}}}
+\frac{\val{dx''}^{2}}{1}+\frac{\val{d\xi''}^{2}}{\Lambda(\xi)^{2}},\label{eee}\\{~}\notag
\\
&\text{where}\quad
\mu(\xi)=1+\bigl(\val{\xi'}^2
\Lambda(\xi)^{-\frac{1}{\gamma+1}}\bigr)^{{\frac{\gamma+1}{2\gamma+1}}}.
\end{align}
The notation for $g$ above means that for each $(x,\xi)\in \RZ$,
$g_{x,\xi}$ is a positive definite quadratic form on $\RZ$
so that 
for $(z',z'',\zeta',\zeta'')\in \R^{n'}\times\R^{n''}\times\R^{n'}\times \R^{n''}$, we have
$$
g_{x,\xi}(z,\zeta)=
\frac{\val{z'}^{2}}{\Lambda(\xi)^{-\frac{1}{\gamma+1}}\mu(\xi)^{\frac{1}{\gamma+1}}}
+\frac{\val{\zeta'}^{2}}{\Lambda(\xi)^{\frac{1}{\gamma+1}}\mu(\xi)^{\frac{2\gamma+1}{\gamma+1}}}
+\frac{\val{z''}^{2}}{1}+\frac{\val{\zeta''}^{2}}{\Lambda(\xi)^{2}}.
$$
We note first that
\begin{equation}\label{222}
1\le \mu(\xi)\le 1+\Lambda(\xi)\le 2\Lambda(\xi).
\end{equation}
This inequality implies 
that 
$$
\Lambda^{-1}\mu\le 2,\quad 
\Lambda^{\frac{1}{\gamma+1}}\mu^{\frac{2\gamma+1}{\gamma+1}}\le \Lambda^{2 }\ 2^{\frac{2\gamma+1}{\gamma+1}},
\text{ and hence }
g\ge c(\gamma) G,
$$
where $c(\gamma)$
is a positive constant depending only
on  $\gamma$.
This inequality induces of course that the localization given by the metric $g$ is finer than the one provided by $G$.
Calculating the square of the product of conjugate axes of $g$, we get
respectively
$$
\Bigl(\Lambda(\xi)^{-\frac{1}{\gamma+1}}\mu(\xi)^{\frac{1}{\gamma+1}}
\Bigr)\times 
\Bigl(\Lambda(\xi)^{\frac{1}{\gamma+1}}\mu(\xi)^{\frac{2\gamma+1}{\gamma+1}}\Bigr)= 
\mu(\xi)^2,
\quad1\times \Lambda(\xi)^{2}
$$
so that the ``large parameter'' of the metric $g$ is
(equivalent to) $\mu$, and
$g$ satisfies the Uncertainty Principle.
\begin{lemma}\label{77}
The metric $g$ given by \eqref{eee}
is slowly varying, i.e. such that there exist $r, C$
positive  so that 
\begin{equation}\label{1111}
g_{y,\eta}\bigl((y,\eta)-(x,\xi)\bigr)\le r^{2}\Longrightarrow \forall T\in \RZ, \quad C^{-1}\le \frac{g_{x,\xi}(T)}{g_{y,\eta}(T)}\le C.
\end{equation}
Moreover, the metric $g$ is also uniformly temperate on the balls of the metric $G$, i.e.
there exists $C,N, r$ positive
such that
$\forall X=(x,\xi)\in \RZ, \forall Y=(y,\eta)\in \RZ,\quad$
\begin{equation}\label{2222}
G_{X}(Y-X)\le r^{2}
\Longrightarrow \forall T\in \RZ, \quad 
\frac{g_{x,\xi}(T)}{g_{y,\eta}(T)}\le C\bigl(1+g_{x,\xi}^{\sigma}(Y-X)\bigr)^{N},
\end{equation}
 where the quadratic form $g_{x,\xi}^{\sigma}$ (which is the ``symplectic inverse'' of $g_{x,\xi}$)
is given by
$$
g_{x,\xi}^{\sigma}(z,\zeta)=
\Lambda(\xi)^{\frac{1}{\gamma+1}}\mu(\xi)^{\frac{2\gamma+1}{\gamma+1}}
{\val{z'}^{2}}
+\Lambda(\xi)^{-\frac{1}{\gamma+1}}\mu(\xi)^{\frac{1}{\gamma+1}}{\val{\zeta'}^{2}}
+\Lambda(\xi)^{2}{\val{z''}^{2}}+{\val{\zeta''}^{2}}.
$$ 
\end{lemma}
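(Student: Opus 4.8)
The plan is to derive both assertions from the corresponding classical properties of the single weight $\Lambda(\xi)=\langle\xi\rangle$, together with a careful bookkeeping of the second weight $\mu$, using that the four diagonal coefficients of $g$ in \eqref{eee} are monomials in $\Lambda$ and $\mu$. The one nontrivial elementary ingredient I would isolate first is the equivalence
$$
\mu(\xi)^{\frac{2\gamma+1}{\gamma+1}}\simeq 1+|\xi'|^2\Lambda(\xi)^{-\frac{1}{\gamma+1}},
$$
valid with constants depending only on $\gamma$: writing $t=|\xi'|^2\Lambda(\xi)^{-\frac1{\gamma+1}}\ge0$ and $\theta=\frac{\gamma+1}{2\gamma+1}\in(\tfrac12,1)$ one has $\mu=1+t^\theta$, and $1+t\le(1+t^\theta)^{1/\theta}\le 2^{1/\theta-1}(1+t)$ follows from the super- and sub-additivity of $s\mapsto s^p$ for $p=1/\theta\ge1$. (That $\mu$ is merely Lipschitz is harmless here: only the weight inequalities enter, and one may in any case replace $\mu$ by the smooth equivalent weight $(1+|\xi'|^2\Lambda(\xi)^{-\frac1{\gamma+1}})^\theta$.) I would also record the trivial fact that for two diagonal quadratic forms in the same splitting, the ratio $g_{x,\xi}(T)/g_{y,\eta}(T)$ always lies between the smallest and the largest of the four coefficient ratios, so that everything reduces to comparing $\Lambda(\xi)$ with $\Lambda(\eta)$ and $\mu(\xi)$ with $\mu(\eta)$.

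For slow variation \eqref{1111}, I would start from $g_{y,\eta}\big((y,\eta)-(x,\xi)\big)\le r^2$, read off the four blocks — in particular $|\xi'-\eta'|^2\le r^2\Lambda(\eta)^{\frac1{\gamma+1}}\mu(\eta)^{\frac{2\gamma+1}{\gamma+1}}$ and $|\xi''-\eta''|^2\le r^2\Lambda(\eta)^2$ — and use \eqref{222} to get $|\xi-\eta|\le C(\gamma)r\Lambda(\eta)$; for $r$ small the classical slow variation of $\Lambda$ then yields $\Lambda(\xi)\simeq\Lambda(\eta)$ with constants that tend to $1$ as $r\to0$. Next, from $|\xi'|^2\le 2|\eta'|^2+2|\xi'-\eta'|^2$, the trivial half $|\eta'|^2\Lambda(\eta)^{-\frac1{\gamma+1}}\le\mu(\eta)^{\frac{2\gamma+1}{\gamma+1}}$ of the displayed equivalence, and the bound on $|\xi'-\eta'|$, I would deduce $|\xi'|^2\Lambda(\xi)^{-\frac1{\gamma+1}}\lesssim\mu(\eta)^{\frac{2\gamma+1}{\gamma+1}}$, hence $\mu(\xi)\lesssim\mu(\eta)$; the reverse inequality comes out the same way, once the resulting $r^2\mu(\eta)^{\frac{2\gamma+1}{\gamma+1}}$ term is absorbed into the left-hand side for $r$ small. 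Together with $\Lambda(\xi)\simeq\Lambda(\eta)$ this bounds all four coefficient ratios above and below, which is \eqref{1111}.

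For the temperance statement \eqref{2222} I would argue analogously: under $G_X(Y-X)\le r^2$ with $r$ fixed small, slow variation of $\Lambda$ for $G$ already gives $\Lambda(\xi)\simeq\Lambda(\eta)$, so the coefficient ratios are equivalent to powers of $\mu(\xi)/\mu(\eta)$, the largest (absolute) exponent being $\frac{2\gamma+1}{\gamma+1}$, coming from the $d\xi'$ block. It then suffices to show $\mu(\xi)/\mu(\eta)+\mu(\eta)/\mu(\xi)\lesssim\big(1+g^\sigma_X(Y-X)\big)^{\frac{\gamma+1}{2\gamma+1}}$. From the displayed equivalence and $|\xi'|^2\le 2|\eta'|^2+2|\xi'-\eta'|^2$ one gets $\mu(\xi)^{\frac{2\gamma+1}{\gamma+1}}\lesssim\mu(\eta)^{\frac{2\gamma+1}{\gamma+1}}+|\xi'-\eta'|^2\Lambda(\xi)^{-\frac1{\gamma+1}}$, and since $\mu(\xi)\ge1$ the last term is at most $\Lambda(\xi)^{-\frac1{\gamma+1}}\mu(\xi)^{\frac1{\gamma+1}}|\xi'-\eta'|^2$, which is precisely one of the nonnegative blocks of $g^\sigma_X(Y-X)$, hence $\le g^\sigma_X(Y-X)$; using $\mu(\eta)\ge1$ this yields $\mu(\xi)/\mu(\eta)\lesssim(1+g^\sigma_X(Y-X))^{\frac{\gamma+1}{2\gamma+1}}$, and the symmetric bound follows in the same manner. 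Feeding this into the coefficient ratios gives \eqref{2222} with, say, $N=1$ and a constant depending only on $\gamma$.

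I do not expect a genuine obstacle: the computations are all elementary. The only delicate points are organizational — extracting the four block inequalities with the correct powers of $\Lambda$ and $\mu$, keeping the dependence on $r$ uniform when absorbing the small error terms in the slow-variation step, and tracking the non-smoothness of $\mu$ (which, as noted, never enters the estimates). A secondary point worth double-checking is that the exponent produced by the worst coefficient ratio is genuinely finite and depends only on $\gamma$, which the above argument confirms with $N=1$.
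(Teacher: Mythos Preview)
Your proposal is correct and follows essentially the same route as the paper: reduce the ratio $g_{x,\xi}/g_{y,\eta}$ to ratios of $\Lambda$ and $\mu$, use $g\ge c(\gamma)G$ together with the slow variation of $G$ to get $\Lambda(\xi)\simeq\Lambda(\eta)$, then control $\mu(\eta)/\mu(\xi)$ via the equivalence $\mu^{\frac{2\gamma+1}{\gamma+1}}\simeq 1+|\xi'|^2\Lambda^{-\frac1{\gamma+1}}$ and an absorption (bootstrap) argument for \eqref{1111}, and via the trivial inequality $\mu\ge1$ to insert the $\mu(\xi)^{\frac1{\gamma+1}}$ factor and recognize a block of $g^\sigma_X$ for \eqref{2222}. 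The paper packages the coefficient comparison as the single inequality $\bigl(g_{x,\xi}/g_{y,\eta}\bigr)^{\gamma+1}\lesssim \frac{\Lambda(\eta)}{\Lambda(\xi)}\frac{\mu(\eta)^{2\gamma+1}}{\mu(\xi)^{2\gamma+1}}+\frac{\Lambda(\xi)}{\Lambda(\eta)}\frac{\mu(\eta)}{\mu(\xi)}$ and works with the equivalent form $\mu^{2\gamma+1}\simeq 1+|\xi'|^{2\gamma+2}\Lambda^{-1}$, but the substance and the final $N=1$ are the same.
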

\begin{proof}
 We have
 \begin{equation}\label{3333}
\left(\frac{g_{x,\xi}}{g_{y,\eta}}\right)^{\gamma+1}\lesssim\frac{\Lambda(\eta)}{\Lambda(\xi)}\frac{\mu(\eta)^{2\gamma+1}}{\mu(\xi)^{2\gamma+1}}
 +\frac{\Lambda(\xi)}{\Lambda(\eta)}\frac{\mu(\eta)}{\mu(\xi)},
\end{equation}
 and if 
 $g_{y,\eta}\bigl((y,\eta)-(x,\xi)\bigr)\le r^{2}$,
 this implies 
 $G_{y,\eta}\bigl((y,\eta)-(x,\xi)\bigr)\le r^{2}/c(\gamma)$.
 Since $G$ is slowly varying, we may choose $r$ small enough to 
 get
 $
{\Lambda(\xi)}/{\Lambda(\eta)}\sim 1.
 $
 Then, with fixed constants $C_{j}$, we find 
 $$
 \frac{\mu(\eta)^{2\gamma+1}}{\mu(\xi)^{2\gamma+1}}\le C_{1} \frac{1+\val{\eta'}^{2\gamma+2}\Lambda^{-1}}{
 1+\val{\xi'}^{2\gamma+2}\Lambda^{-1}}\le 
C_{2}+
C_{2} \left(\frac{\val{\eta'-\xi'}^{2}\Lambda^{-\frac1{\gamma+1}}}{1+\val{\xi'}^{2}\Lambda^{-\frac1{\gamma+1}}}\right)^{\gamma+1},
 $$
 and since from the assumption \eqref{1111}, we know that
 $
 \val{\xi'-\eta'}^{2}\le r^{2}\mu(\eta)^{\frac{2\gamma+1}{\gamma+1}}\Lambda^{\frac1{\gamma+1}},
 $
 we obtain
 $$
  \frac{\mu(\eta)^{2\gamma+1}}{\mu(\xi)^{2\gamma+1}}\le C_{2}+C_{3}r^{2\gamma+2}\frac{\mu(\eta)^{2\gamma+1}}{\mu(\xi)^{2\gamma+1}}
  \quad\text{which implies }
  \frac{\mu(\eta)^{2\gamma+1}}{\mu(\xi)^{2\gamma+1}}\le 2C_{2},
 $$
 if
 $C_{3}r^{2\gamma+2}\le 1/2$, providing the inequality $g_{X}\le C_{4} g_{Y}$ in \eqref{1111} for $r\le r_{0}$.
 Now if $g_{Y}(X-Y)\le r_{0}^{2}/C_{4}$, we get 
 $g_{X}(X-Y)\le r_{0}^{2}$ and thus $g_{Y}\le C_{4}g_{X}$, completing the proof of \eqref{1111}.
 \par
 To prove \eqref{2222},
 we may choose $r$ such that $G_{X}(Y-X)\le r^{2}$ implies $\Lambda(\xi)\sim \Lambda(\eta)$
 and from \eqref{3333}, it suffices  to prove
 $$\val{\xi'-\eta'}^{2}\Lambda^{-\frac1{\gamma+1}}\le
\val{\xi'-\eta'}^{2}\Lambda^{-\frac1{\gamma+1}}\mu(\xi)^{\frac1{\gamma+1}},
 $$
 which is true since $\mu\ge 1$.
\end{proof}
\begin{remark}
Lemma \ref{77} may look outrageously complicated and unintuitive, but these properties,
essentially introduced by L. H\"ormander (see Chapter 18, in \cite{Hoermander:V3}),
are linked to some ``admissibility'' of the cutting of the phase space provided by this metric.
The uncertainty principle (here $\mu\ge 1$) is the most natural condition,
but Conditions \eqref{1111} and \eqref{2222}
are important for a pseudodifferential calculus to make sense. In other words,
we need some conditions to patch together the estimates that we are able to prove in each specific
region described in Section \ref{sec.77}.
A cutting procedure will generate commutators and we have to make sure that these commutators do not destroy or spoil the basic local estimates that we are able to prove.
\end{remark}
 \appendix
\section{Some geometric facts}
\label{s:geometry}
In this section, we recall some elementary geometric definitions and facts.
The metric $g$ furnishes in each tangent space $T_xM$ (as well as in each cotangent space $T^*_xM$) an inner product denoted $(\cdot , \cdot)_{g(x)}$ (with the same notation on $T^*_xM$).
In local coordinates, we write $g_{ij}$ for the metric $g$ on the
tangent bundle $TM$. As a metric on the cotangent bundle $T^*M$, $g$
is given by $g^{ij}$ in local coordinates, \ie 
$$
(\eta,\xi)_{g(x)} = \ssum_{i,j} g^{ij}(x) \eta_i \xi_j, \quad \text{ where } g^{ij}(x) = (g(x)^{-1})_{ij} . 
$$
For $x \in M$ and $\xi \in T^*_x M$, we denote by $|\xi|_x = (\xi,\xi)_{g(x)}^{\frac12}$ the associated norm. We also use the notation $p(x,\xi) = |\xi|_x^2$.
For all $v \in T_xM$, we can define $v^* \in T^*_xM$ uniquely determined
by the identity
$$ (v, w)_{g(x)} = \left< v^* , w \right>_{T_{x}^{*}(M), T_{x}(M)}, \qquad\text{for all $w \in T_xM$,}$$
which reads in local coordinates $v^*_i = \sum_j g_{ij}(x) v_j$. Note that $|v|_x = |v^*|_x$.
\vs
We now give a definition of geodesics on $M$ associated with the metric $g$, as used in Theorem~\ref{th: resolvent estimate}. 
We denote by $s \mapsto \phi_s(x , \xi) \in T^*M \setminus 0$ the Hamiltonian flow associated to $p$, that is, the (maximal) solutions of
\begin{equation}
  \label{eq: geodesic flow}
  \frac{d}{ds}\phi_s(x, \xi) = H_p \big(\phi_s(x, \xi)\big), \quad \phi_0(x, \xi) = (x, \xi)  \in T^*M \setminus 0, 
\end{equation}
where the Hamilton vector field $H_p$ is given by $H_p = (\nabla_{\xi}p , - \nabla_{x}p )$ in local coordinates. In particular we have $\frac{d}{ds} x_i(s) = 2 \sum_j g^{ij}(x(s)) \xi_j(s)$, that is $\xi(s) = \frac12 \big( \frac{d}{ds} x(s)\big)^*$. Note that the value of $p$ is preserved along this integral curve as 
$$
 \frac{d}{ds}p \circ\phi_s|_{s = s_0} = H_p(p)(\phi_{s_0}) = \{p,p\}(\phi_{s_0}) =0 .
$$
As a consequence, $\phi_s$ is a global flow preserving the norm.
\par
Let now $S^2M = \{(x,v) \in TM , |v|_x =(v,v)_{g(x)}^\hf = 2\}$. For $(x,v) \in S^2M$, we consider the curve $(x(s),v(s))$ given by
$$
(x(s), v(s)^*) = \phi_s(x, v^*).
$$
Note that we have $\frac{d}{ds} x(s) = v(s)$. In particular, $\frac{d}{ds} x(0) = v$ and moreover 
$$
|v(s)|_{x(s)} = |v(s)^*|_{x(s)} = 2 |\xi(s)|_{x(s)} = p({x(s)},v(s)^*)^\frac12 =  p(x,v^*)^\frac12 = |v|_x = 2.
$$
We call the curve $s \mapsto x(s)$ on $M$ the geodesic originating from $(x, \xi)\in S^*M$ at time $s=0$. The above remarks show that $(x(t), \frac{dx}{dt}(t)) \in S^2M$: the traveling speed of the geodesic is constant (and equal to $2$).
\par
Finally, the covariant gradient and the divergence operators are given in local
coordinates by
$$
\nabla_g = \ssum_i g^{ij} \d_{x_i}, \qquad 
\div_g v = \frac{1}{\sqrt{\det(g)}} \ssum_i \d_{x_i}( \sqrt{\det(g)} v_i),
$$
The usual (negative) Laplace-Beltrami operator on $M$ is defined by $\Delta_g = \div_g \nabla_g$, that is,
$$
\Delta_g = \frac{1}{\sqrt{\det(g)}}
\ssum_{i,j} \d_{x_i}(g^{ij} \sqrt{\det(g)}\d_{x_j}),
$$ in local coordinates. {It is selfadjoint on $L^2(M)$ endowed with the Riemannian dot-product $\langle f ,g \rangle_{L^2(M)}$ given by  $\int f \overline{g} \sqrt{\det(g)} dx$ in local charts.
\par
Finally, under the additional structure assumption $(M,g) = ( \Mp \times \T^{n''}, g' + |d x_1''|^2 + \cdots + |d x_{n''}''|^2)$ (where $g'$ is a metric on $M'$), we obtain
$$
\phi_s(x', \xi' ,x'',\xi'') =(\phi_s'(x', \xi') ,x'' + s \xi'',\xi'') ,
$$
(where we changed the order of the variables for readability) with $\phi_s'$ the flow on  $T^*\Mp$ associated with the Hamilton vector field $H_{p'}$ with $p'(x',\xi') = |\xi'|_{x'}^2$.
Similarly, we have
$$
\Delta_g = \Delta_{M'} +\Delta_{\T^{n''}} = \frac{1}{\sqrt{\det(g')}}
\ssum_{i,j \leq n'} \d_{x_i'}({g'}^{ij} \sqrt{\det(g')}\d_{x_j'}) + \ssum_{j \leq n''} \d_{x_i''}^2.
$$ 
}
\section{Toolbox of pseudodifferential calculus}
\label{s:pseudodiff}
\subsection{Pseudodifferential operators on \texorpdfstring{$\R^d$}{rd}}\label{sec.pseudo}
{\bf Notations.}
We recall that the Weyl quantization of a symbol $a(x,\xi)$ on $\R^{2d}$, the  operator denoted by $a^{w}$, is given by 
\begin{align}
\label{eq: weyl quantiz}
(a^{w}u)(x)=\iint e^{i(x-y)\cdot \xi} a(\frac{x+y}2,\xi) u(y) dy d\xi (2\pi)^{-d},
\end{align}
which is a small variation with respect to the more standard quantization
$$
(a(x,D)u)(x)=\int e^{ix\cdot \xi} a(x,\xi) \hat u(\xi)  d\xi (2\pi)^{-d}.
$$
One of the (many) assets of Weyl quantization is the formula for taking adjoints, $$(a^{w})^{*}=(\bar a)^{w},$$ a convenient feature for our computations with non-selfadjoint operators.
The symplectic invariance of the Weyl quantization
is an important property, useful for the proof that our estimate \eqref{eq:estimate Rd}
is optimal, can be expressed as follows (see e.g. \cite[Theorem 2.1.2]{Lernerbook}):
let $a$ be a tempered distribution on $\R^{2d}$ and let $\chi$ be an affine symplectic mapping of $\R^{2d}$. Then there exists a unitary transformation $U$ of $L^{2}(\R^{d})$
such that
\begin{equation}\label{segal}
(a\circ \chi)^{w}=U^{*}a^{w}U.
\end{equation}
This implies for instance that, for $\alpha_{0}>0, (x_{0},\xi_{0})\in \R^{2d}$,
the operator with Weyl symbol $b$ given by
$$
b(x,\xi)=a(\alpha_{0} x+x_{0},\alpha_{0}^{-1}\xi+\xi_{0})
$$
is unitarily equivalent to $a^{w}$.
In the main part of the article, we also use the notation 
\begin{equation}\label{bfps}
S(\mathcal{M},\frac{\val {dx}^{2}}{\varphi(x,\xi)^{2}}+\frac{\val {d\xi}^{2}}{\Phi(x,\xi)^{2}})
\end{equation}
for the space of smooth functions $a$
on $\R^{2d}$ such that for each multi-indices $\alpha,\beta$, there exists $C_{\alpha\beta}>0$ such that
$$\forall (x,\xi)\in \R^{2d},\quad
\val{(\p_{x}^{\alpha}\p_{\xi}^{\beta} a)(x,\xi)}\le C_{\alpha\beta}\mathcal{M}(x,\xi)\varphi(x,\xi)^{-\val \alpha}\Phi(x,\xi)^{-\val \beta},
$$
where the positive functions $\varphi,\Phi, \mathcal{M}$ are such that the metric 
$\frac{\val {dx}^{2}}{\varphi(x,\xi)^{2}}+\frac{\val {d\xi}^{2}}{\Phi(x,\xi)^{2}}$ and the weight $\mathcal M$ are admissible (see~\cite[Section~18.5]{Hoermander:V3} or \cite[Section~2.2]{Lernerbook} for precise definitions).
In Section~\ref{s:geometriccontrol} (where we only use ``one-microlocal'' calculus), the following semiclassical class is used (see below for its definition on a manifold):
$$
S^m_{sc}(\R^{2n}) : = S((\lambda^2 + \langle \xi \rangle^2)^\frac{m}{2},\val{dx}^{2}+\frac{\val {d\xi}^{2}}{\lambda^2 + \langle \xi \rangle^2}), \quad \lambda \geq 1 .
$$
Note that $|\xi|^2 - \lambda^2 \in S^2_{sc}(\R^{2n})$, and for any $a \in \Cinf (\R^{2n})$ which is homogeneous of degree $m$, we have $\lambda^{\tilde{m}}a(x, \frac{\xi}{\lambda}) \in S^{m+\tilde{m}}_{sc}(\R^{2n})$. We also denote by\footnote{\label{s000+}$S^0_{0,0}(\R^{2n}) $ is the space of smooth functions on $\RZ$ which are bounded as well as all their derivatives.}
\begin{equation}\label{s000}
S^0_{0,0}(\R^{2n}) : = S( 1 ,\val{dx}^{2}+\val {d\xi}^{2}) ,
\end{equation}
and 
$$
S^m_{\lambda}(\R^{2n}) : = S(\lambda^m,\val{dx}^{2}+\frac{\val {d\xi}^{2}}{\lambda^2 }), \quad \lambda \geq 1 .
$$
We notice that if $a(x, \xi) \in S^0_{0,0}(\R^{2n})$, then $\lambda^{m}a(x, \frac{\xi}{\lambda}) \in S^{m}_{\lambda}(\R^{2n})$. Moreover, we have $S^m_{sc}(\R^{2n}) \subset S^m_{\lambda}(\R^{2n})$
\par
We shall use also the following identities, for $a,b$ real valued symbols, say smooth functions on $\R^{2d}$
bounded with all derivatives bounded:
$$
2\re\poscal{a^{w}u}{i b^{w}u}=\poscal{[a^{w}, ib^{w}] u}{u},
$$
which follows from
$$
2\re\poscal{a^{w}u}{i b^{w}u}=\poscal{a^{w}u}{i b^{w}u}+\poscal{ib^{w}u}{a^{w}u}
=\poscal{C u}{u},
$$
with 
$
C=-i(b^{w})^{*}a^{w}+(a^{w})^{*}ib^{w}=[a^{w}, ib^{w}].
$
Moreover the ``principal''        symbol of $[a^{w}, ib^{w}]$ is the Poisson bracket 
\begin{equation}\label{poisson}
\poi{a}{b}=\sum_{1\le j\le d}\frac{\p a}{\p \xi_{j}}\frac{\p b}{\p x_{j}}-\frac{\p a}{\p x_{j}}\frac{\p b}{\p \xi_{j}}.
\end{equation}
\subsection{Pseudodifferential operators on a manifold}
\label{s:pseudoM}
In this section, we briefly explain the semiclassical calculus used in Section~\ref{s:geometriccontrol} on any $n$-dimensional compact manifold $M$. 
More details can be found in~\cite[pp~81-87]{Hoermander:V3} for classical operators or \cite[Appendix~B and Appendices C9-C13]{LRLR:11} in the semiclassical setting.
\par
Let us first recall that, given a diffeomorphism $\phi$ between two open sets $\phi: U_1 \to U_2$,
the associated pullback (here stated for continuous functions) is
\begin{align*}
  \phi^\ast : \Con (U_2) &\to \Con (U_1),\\
    u &\mapsto  u \circ \phi.
\end{align*}
For a function $a$ defined on phase-space, \eg a symbol, the pullback is
given by
\begin{align}
  \label{eq: pullback phasespace}
  \phi^\ast a (x,\xi) = a (\phi(x),\transp\phi'(x)^{-1} \xi), 
  \quad x \in U_1, \xi \in T^\ast_x(U_1) , \quad a \in \Con(T^\ast U_2).
\end{align}
Note that this transformation is symplectic.
\vs
The compact manifold $M$ is of dimension $n$ and is furnished with a finite
atlas $(U_j, \phi_j)$, $j \in J$. The maps $\phi_j: U_j \to \Ut_j
\subset \R^{n}$ are smooth diffeomorphisms. 
\begin{definition}
\begin{itemize}
We say that 
\item $a \in S^m_{sc}(T^*M)$ if $a \in \Cinf(T^*M)$ and for any $j \in J$, for any $\chi \in \Cinfc(U_j)$, we have $(\phi_j^{-1})^*(\chi a) \in S^m_{sc}(\R^{2n})$.
\item $a \in S^m_{\lambda}(T^*M)$ if $a \in \Cinf(T^*M)$ and for any $j \in J$, for any $\chi \in \Cinfc(U_j)$, we have $(\phi_j^{-1})^*(\chi a) \in S^m_{\lambda}(\R^{2n})$.
\item $a \in S^0_{0,0}(T^*M)$ if $a \in \Cinf(T^*M)$ and for any $j \in J$, for any $\chi \in \Cinfc(U_j)$, we have $(\phi_j^{-1})^*(\chi a) \in S^0_{0,0}(\R^{2n})$.
\end{itemize}
\end{definition}
Note that with this definition, we have $|\xi|_x^2 - \lambda^2 \in S^2_{sc}(T^*M)$, and for any $a \in \Cinf (T^*M)$ which is homogeneous of degree $m$ in the fibers, we have $\lambda^{\tilde{m}}a(x, \frac{\xi}{\lambda}) \in S^{m+\tilde{m}}_{sc}(T^*M)$.
\vs
Next, we explain how to quantize such symbols and recall some of the properties of the quantization.
Let us first denote by $(\psi_j)_j$ a partition of unity subordinated to the covering $M = \bigcup_{j \in J} U_j$:
 $$
 \psi_j \in \Cinf(M),\quad  \supp(\psi_j) \subset U_j,  \quad 0\leq \psi_j \leq 1, \quad \ssum_j \psi_j =1.
 $$
We also need functions $\tilde{\psi}_j \in \Cinfc(\R^{2n})$ such that $\supp(\tilde{\psi}_j) \subset \tilde{U}_j$ and  $\tilde{\psi}_j = 1$ in a \nhd of $\supp((\phi_j^{-1})^*\psi_j)$.
 \begin{definition}
Given $a = a(\lambda, x , \xi) \in \Cinf(T^*M)$, we define the following operator
$$
\Op(a) = \Op \left( a(\lambda, x , \xi) \right)  = \ssum_{j \in J} A_j , \qquad 
A_j u = \phi_j^* \big(\tilde{\psi}_j a_j^w (\phi_j^{-1})^* (\psi_j u) \big) ,\quad u \in \Cinf(M), 
$$
where $a_j = (\phi_j^{-1})^* a$.
\end{definition} 
Basically, this amounts to apply the operator associated to $a$ in local charts. This definition applies in particular if $a \in S^m_{sc}(T^*M)$ or $a \in S^m_{\lambda}(T^*M)$
Note that if $a=a(\lambda,x)$ does not depend on the cotangent variable $\xi$, then $(\Op(a) u)(x) = a(\lambda,x)u(x)$.
An operator $\Op(a)$ for $a \in S^m_{sc}(T^*M)$ is a semiclassical pseudodifferential operator in the following usual sense:
\begin{definition}
  \label{def: operator on manifold}
  We say that the operator $A: \Cinfc(M) \to \Cinf(M)$ belongs to the class $\Psi^m_{sc}(M)$ if:  \begin{enumerate}
  \item Its distribution kernel $K(x,y)$ is smooth outside $\diag (M \times M) = \{(x,x), x \in M\}$ in the semi-classical sense: $K \in \Cinf(M\times M \setminus \diag (M \times M))$ and for any semi-norm $q$ on $\Cinf(M\times M)$, for any $\chi, \hat{\chi} \in \Cinfc(M)$ such that $\supp\chi \cap \supp \hat{\chi} = \emptyset$,  we have $q\big( \chi(x) \hat{\chi} (y) K(x,y) \big) = O( \lambda^{-\infty}).$
        \item For all $j \in J$ and all $\chi \in \Cinfc( U_j)$, $\tilde{\chi} \in \Cinfc(\tilde{U}_j)$, the application
\begin{align*}
 u  \mapsto
  \big(\phi_j^{-1}\big)^\ast \big(\chi 
  A  \phi_j^\ast (\tilde{\chi} u) \big)
\end{align*}
belongs to $\Psi^m_{sc}(\R^n) = \{a^w, a \in S^m_{sc}(\R^{2n})\}$.
\end{enumerate}
\end{definition}
We could have defined as well the classes $\Psi^m_{\lambda}(M)$ to which would belong $\Op(a)$ for $a \in S^m_{\lambda}(T^*M)$. However, in the main part of the text, we shall only use two features of such operators. First, the Calder{\'o}n-Vaillancourt Theorem~\cite{CV:71} entails that for $a\in S^0_{\lambda}(T^*M)$, $\Op(a)$ defines an operator bounded on $L^2(M)$ uniformly with respect to $\lambda$ (in fact, this theorem is stated that reference on $\R^n$; its counterpart on $M$ follows easily when using local charts).
Next, we need the following result, a proof of which can be for instance adapted from~\cite[Appendix~B and Appendices C9-C13]{LRLR:11}.
\begin{proposition}
\label{p:suppdisj}
Take $a,\tilde{a} \in S^0_{\lambda}(T^*M)$ such that $d(\supp(a) , \supp(\tilde{a})) \geq C>0$ uniformly with respect to $\lambda$. Then we have
$$
\norm{\Op(a) \Op( \tilde{a})}_{\calL(L^2(M))} = O(\lambda^{-\infty}) .
$$
\end{proposition}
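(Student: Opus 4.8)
The plan is to reduce, via local charts, to the classical non-stationary phase estimate for the Weyl composition of two symbols with separated supports.

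\emph{Step 1: reduction to $\R^n$.} With the notation of Appendix~\ref{s:pseudodiff}, write $\Op(a)\Op(\tilde a)=\sum_{j,k\in J}A_j\tilde A_k$, a finite sum. Each $A_j$ is, in the chart $\phi_j$, a semiclassical pseudodifferential operator whose symbol is supported near $\supp\bigl((\phi_j^{-1})^*\psi_j\bigr)$; in particular its Schwartz kernel is $O(\lambda^{-\infty})$, together with all its derivatives and locally uniformly, off the diagonal of $M\times M$ (pseudolocality, obtained by nonstationary phase in the frequency variable of the oscillatory integral defining $a_j^w$, after a routine splitting of that integral to secure convergence). Hence $A_j\tilde A_k$ is $O(\lambda^{-\infty})$ in $\calL(L^2(M))$ whenever $\supp\psi_j\cap\supp\psi_k=\emptyset$; for the remaining (finitely many) pairs one works in a common chart covering $\supp\psi_j\cup\supp\psi_k$, and using the invariance of the operator classes under changes of coordinates together with the corresponding symbol transformation (which acts symplectically on phase space, see \cite{Hoermander:V3,LRLR:11}), $A_j\tilde A_k$ equals, modulo $O(\lambda^{-\infty})$ in $\calL(L^2(M))$, the quantization of a composition $b^w\tilde b^w$ on $\R^n$, where $b,\tilde b\in S^0_\lambda(\R^{2n})$ and, the chart diffeomorphisms being bi-Lipschitz on the relevant compacta (and linear in the fibre variable), $\supp b$ and $\supp\tilde b$ stay separated by a positive distance uniformly in $\lambda$, measured with the metric $g_\lambda:=|dx|^2+\lambda^{-2}|d\xi|^2$ natural to the calculus. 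It therefore suffices to prove the statement on $\R^n$.

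\emph{Step 2: the model estimate on $\R^n$.} Let $b,\tilde b\in S^0_\lambda(\R^{2n})$ with $d_{g_\lambda}(\supp b,\supp\tilde b)\geq c>0$ uniformly. Write $b(x,\xi)=b_0(x,\xi/\lambda)$, $\tilde b(x,\xi)=\tilde b_0(x,\xi/\lambda)$, where $b_0,\tilde b_0\in S^0_{0,0}(\R^{2n})$ are bounded with all their derivatives uniformly in $\lambda$ and $\mathrm{dist}(\supp b_0,\supp\tilde b_0)\geq c$ in the Euclidean metric; with $h=1/\lambda$, the rescaling $\xi\mapsto\lambda\xi$ turns $b^w\tilde b^w=(b\#\tilde b)^w$ into the semiclassical Weyl composition, given (up to normalisation) by
$$(b_0\#_h\tilde b_0)(X)=\frac{1}{(\pi h)^{2n}}\iint e^{-\frac{2i}{h}\sigma(Y-X,Z-X)}\,b_0(Y)\,\tilde b_0(Z)\,dY\,dZ .$$
On the support of $b_0(Y)\tilde b_0(Z)$ one has $|Y-Z|\geq c$, hence $|Y-X|\geq c/2$ or $|Z-X|\geq c/2$, and on each of these two regions the gradient of the phase with respect to $Z$, respectively $Y$, is bounded below by $(c/2)h^{-1}$. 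Introducing the associated transport operators and integrating by parts $N$ times — each integration producing a factor of order $h$, with derivatives falling on the bounded symbols $b_0,\tilde b_0$ or on negative powers of the phase-gradient, which stay in the symbol class on the region considered — and a further $2n+1$ times to restore absolute convergence (using the weights $\langle Y-X\rangle^{-1}$, $\langle Z-X\rangle^{-1}$ available from the phase), one gets that $b_0\#_h\tilde b_0$ and all its derivatives are $O(h^N)=O(\lambda^{-N})$ uniformly in $\lambda$, for every $N$; this is the standard ``disjoint supports'' statement of the Weyl calculus (see \cite[Thm.~18.5.4]{Hoermander:V3}, \cite{Lernerbook}). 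The Calder\'on–Vaillancourt theorem \cite{CV:71} then yields $\|b^w\tilde b^w\|_{\calL(L^2(\R^n))}=\|(b_0\#_h\tilde b_0)^w\|_{\calL(L^2(\R^n))}=O(\lambda^{-\infty})$, which together with Step 1 proves the proposition.

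The main obstacle is the bookkeeping in the non-stationary phase estimate of Step 2: one must check that the repeated integrations by parts genuinely gain one power of $h$ each, i.e. that the amplitudes $\bigl(\overline{\nabla_Y\Phi}\cdot\nabla_Y\bigr)/|\nabla_Y\Phi|^2$ (and their iterates, and the analogous $Z$-amplitudes) remain in the appropriate symbol class on the region where $|\nabla_Y\Phi|\gtrsim h^{-1}$, and that sufficiently many integrations restore absolute convergence of the $(Y,Z)$-integrals, the symbols being merely bounded rather than integrable. This is exactly where the admissibility — slow variation and temperance — of the metric $g_\lambda$ is used. The chart reduction of Step 1 is routine but rests on the invariance of the operator classes under coordinate changes, so a careful write-up must invoke that fact explicitly.
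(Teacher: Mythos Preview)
The paper does not supply a proof of this proposition: it states it as a standard fact, writing just before the statement that ``a proof of which can be for instance adapted from~\cite[Appendix~B and Appendices C9-C13]{LRLR:11}''. There is therefore nothing in the paper to compare against beyond that reference.

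Your argument is the standard one and is essentially correct: reduce to $\R^n$ via the chart decomposition and pseudolocality, then on $\R^n$ exploit the separation of supports in the Weyl composition formula by nonstationary phase (equivalently, the disjoint-supports/confinement lemma in the admissible-metric calculus), and conclude by Calder\'on--Vaillancourt. This is precisely the route one finds in the references the paper invokes (H\"ormander Chap.~18, Lerner's book, or the LRLR appendix). One small point worth tightening in a final write-up: the hypothesis $d(\supp a,\supp\tilde a)\geq C$ in the paper is not accompanied by an explicit choice of metric, but the only application made (to $a(x,\xi/\lambda)$, $\tilde a(x,\xi/\lambda)$ with $a,\tilde a\in S^0_{0,0}$ having disjoint $\lambda$-independent supports) shows that the intended distance is with respect to $g_\lambda=|dx|^2+\lambda^{-2}|d\xi|^2$, as you correctly identified; you might state this explicitly rather than leave it to the reader.
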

Proposition~\ref{p:suppdisj} implies in particular that for $a,\tilde{a} \in S^0_{0,0}(T^*M)$ such that $$\supp(a) \cap \supp(\tilde{a})=\emptyset,$$ we have
$\norm{\Op \left(a(x, \xi/\lambda) \right) \Op \left(\tilde{a}(x, \xi/\lambda) \right)}_{\calL(L^2(M))} = O(\lambda^{-\infty}) .$
\vs
Next, we describe the pseudodifferential calculus for the class $\Psi^m_{sc}(M)$. We define semiclassical norms on $M$ by
$$
\|u\|_{H^s_{sc}(M)} = \norm{\Op \big(  (|\xi|_x^2 + \lambda^2)^{\frac{s}{2}}\big) u }_{L^2(M)}.
$$
For $s\geq0$, this norm is (uniformly with respect to $\lambda \geq 1$) equivalent to the norm $\|u\|_{H^s(M)}+ \lambda^s\|u\|_{L^2(M)}$ (where $\|u\|_{H^s(M)}$ may be defined in local charts). We have the following important property for semiclassical pseudodifferential operators in the class $\Psi^m_{sc}(M)$.
\begin{proposition}
For any $s,m \in \R$ and any $A \in \Psi^m_{sc}(M)$, we have
 $$
A \in \calL(H^s_{sc}(M);H^{s-m}_{sc}(M)),\quad \text{uniformly with respect to }\lambda\geq 1.
$$
\end{proposition}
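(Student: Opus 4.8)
The plan is to transfer the statement to $\R^n$ by means of the local-chart definition of $\Psi^m_{sc}(M)$, and then to prove the flat version by elliptic regularisation together with the Calder\'on--Vaillancourt theorem, keeping every constant uniform in $\lambda\ge1$.

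\textbf{The flat case.} On $\R^n$, let $\Lambda^t$ denote the Fourier multiplier with symbol $(\lambda^2+|\xi|^2)^{t/2}$, which belongs to $S^t_{sc}(\R^{2n})$ and satisfies $\|u\|_{H^t_{sc}(\R^n)}=\|\Lambda^tu\|_{L^2(\R^n)}$, as well as $\Lambda^{-s}\Lambda^s=\mathrm{Id}$ exactly (both being Fourier multipliers, they commute and multiply). I would first observe that, by the Weyl symbolic calculus for the admissible metric $|dx|^2+\frac{|d\xi|^2}{\lambda^2+|\xi|^2}$ with weights $(\lambda^2+|\xi|^2)^{t/2}$, for any $b\in S^m_{sc}(\R^{2n})$ the operator $\Lambda^{s-m}\,b^w\,\Lambda^{-s}$ has Weyl symbol in $S^0_{sc}(\R^{2n})\subset S^0_{0,0}(\R^{2n})$, with seminorms bounded uniformly in $\lambda\ge1$. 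The Calder\'on--Vaillancourt theorem (as quoted in the excerpt, see~\cite[Theorem~1.1.4]{Lernerbook}) then gives $\|\Lambda^{s-m}\,b^w\,\Lambda^{-s}\|_{\calL(L^2(\R^n))}\le C$ with $C$ independent of $\lambda$, and therefore
$$
\|b^wu\|_{H^{s-m}_{sc}(\R^n)}=\big\|\Lambda^{s-m}\,b^w\,\Lambda^{-s}\,(\Lambda^su)\big\|_{L^2(\R^n)}\le C\,\|\Lambda^su\|_{L^2(\R^n)}=C\,\|u\|_{H^s_{sc}(\R^n)}.
$$
In particular (taking $m=0$), multiplication by a fixed $\Cinfc(\R^n)$ cutoff is bounded on $H^s_{sc}(\R^n)$ uniformly in $\lambda$; and from this, together with the invariance of the symbol classes $S^m_{sc}$ under the fibrewise-linear pullback~\eqref{eq: pullback phasespace} (cf.~\cite[Section~18.5]{Hoermander:V3}), one deduces that the localised semiclassical Sobolev norms are preserved, up to $\lambda$-uniform constants, under composition with such a pullback.

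\textbf{Transfer to $M$.} Let $(\psi_j)_{j\in J}$ be the partition of unity of Section~\ref{s:pseudoM}; then $A=\sum_{j,k\in J}\psi_jA\psi_k$, a finite sum. When $\supp\psi_j\cap\supp\psi_k=\emptyset$, part~(1) of Definition~\ref{def: operator on manifold} gives a kernel that is smooth with all $\Cinf(M\times M)$ seminorms $O(\lambda^{-\infty})$; since $\|\cdot\|_{H^t_{sc}(M)}$ is, for $t\ge0$ (and by duality for $t<0$), dominated by finitely many $L^2$-norms of derivatives times fixed powers of $\lambda$, such a term is $O(\lambda^{-\infty})$ from $H^s_{sc}(M)$ to $H^{s-m}_{sc}(M)$ and is negligible. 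For the overlapping terms, whose kernels sit inside a single $U_\ell\times U_\ell$, I would use part~(2) of Definition~\ref{def: operator on manifold} to write $\psi_jA\psi_k=\phi_\ell^*\circ B\circ(\phi_\ell^{-1})^*$ on the relevant functions, with $B=b^w$ for some $b\in S^m_{sc}(\R^{2n})$; the flat case, the cutoff boundedness, and the norm-preservation under $\phi_\ell^*$ established above then yield
$$
\|\psi_jA\psi_k\,v\|_{H^{s-m}_{sc}(M)}\ \lesssim\ \big\|B\,(\phi_\ell^{-1})^*(\psi_kv)\big\|_{H^{s-m}_{sc}(\R^n)}\ \lesssim\ \big\|(\phi_\ell^{-1})^*(\psi_kv)\big\|_{H^{s}_{sc}(\R^n)}\ \lesssim\ \|v\|_{H^s_{sc}(M)},
$$
uniformly in $\lambda$. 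Summing over the finitely many pairs $(j,k)$ gives the proposition.

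\textbf{Main obstacle.} I expect the genuine difficulty to be not any single computation but rather the uniformity in $\lambda$: every use of the symbolic calculus, of the elliptic parametrix, and of Calder\'on--Vaillancourt above must come with a $\lambda$-independent constant. This reduces, once and for all, to checking that $\big(\,|dx|^2+\tfrac{|d\xi|^2}{\lambda^2+\langle\xi\rangle^2},\ (\lambda^2+\langle\xi\rangle^2)^{t/2}\,\big)$ is a Weyl--H\"ormander structure whose slowness and temperance constants do not depend on $\lambda\ge1$; granting that routine verification, the general calculus of~\cite[Section~18.5]{Hoermander:V3} (or~\cite[Section~2.2]{Lernerbook}, or~\cite[Appendix~B and Appendices C9-C13]{LRLR:11}) applies verbatim, with uniform bounds.
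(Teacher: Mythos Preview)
The paper does not actually prove this proposition: it is stated as a standard fact in the toolbox appendix, with the surrounding calculus properties referred to~\cite{Hoermander:V3} and~\cite[Appendix~B and Appendices C9--C13]{LRLR:11}. Your sketch is precisely the standard argument one finds in those references --- reduce to $\R^n$ via charts, conjugate by the elliptic weights $\Lambda^t=(\lambda^2+|\xi|^2)^{t/2}$, and apply Calder\'on--Vaillancourt, then patch using the partition of unity and the off-diagonal smoothing property --- and it is correct. Your identification of the only nontrivial point (uniformity of the Weyl--H\"ormander structure constants in $\lambda\ge1$) is also on target; once that is checked, everything else is routine.
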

The quantization formula defining $\Op(a)$ enjoys the following properties, used in the main part of the article. We refer to~\cite{Hoermander:V3} or \cite[Appendix~B and Appendices C9-C13]{LRLR:11} for detailed proofs. 
\begin{proposition}
For any $m,\tilde{m} \in \R$ and any $a \in S^m_{sc}(T^*M)$, $\tilde{a} \in S^{\tilde{m}}_{sc}(T^*M)$, we have
\begin{itemize}
\item $\Op(a) \in \Psi^m_{sc}(M)$;
\item $\Op(a)^* - \Op(\bar{a}) \in \Psi^{m-1}_{sc}(M)$ (where the adjoint is taken in $L^2(M)$);
\item $\Op(a)  \Op(\tilde{a}) - \Op(a\tilde{a}) \in \Psi^{m +\tilde{m}-1}_{sc}(M)$;
\item $[\Op(a)  , \Op(\tilde{a}) ] - \frac{1}{i}\Op(\{a, \tilde{a}\}) \in \Psi^{m +\tilde{m}-2}_{sc}(M)$;
\end{itemize}
\end{proposition}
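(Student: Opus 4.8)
The plan is to reduce each of the four assertions to the corresponding statement of the Weyl calculus on $\R^n$ (recalled in Section~\ref{sec.pseudo}), together with the invariance of the class $S^m_{sc}$ under changes of chart. Writing $\Op(a)=\sum_j A_j$ and $\Op(\tilde a)=\sum_k \tilde A_k$ as in the definition, each $A_j$ is $\phi_j^\ast\circ(\text{Weyl pdo on }\R^n)\circ(\phi_j^{-1})^\ast$ composed with the multiplications by $\psi_j,\tilde\psi_j$. For the first bullet I would check the two conditions of Definition~\ref{def: operator on manifold}: by non-stationary phase in~\eqref{eq: weyl quantiz}, the kernel of $a_j^w$ is concentrated near the diagonal modulo $O(\lambda^{-\infty})$, which after transport and use of $\supp\psi_j\subset U_j$ yields the off-diagonal smoothing property; and in any chart $(U_\ell,\phi_\ell)$, after discarding the (smoothing) terms with $\supp\chi\cap\supp\psi_j=\emptyset$, one conjugates the remaining $a_j^w$ by the transition diffeomorphism $\kappa=\phi_\ell\circ\phi_j^{-1}$ and invokes the Kuranishi trick.

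The key point making this work is that $S^m_{sc}(\R^{2n})=S\big((\lambda^2+\langle\xi\rangle^2)^{m/2},|dx|^2+\frac{|d\xi|^2}{\lambda^2+\langle\xi\rangle^2}\big)$ is stable under the phase-space pullback~\eqref{eq: pullback phasespace}: since $\kappa,\kappa^{-1}$ have bounded derivatives of all orders on the relevant compacts, the cotangent lift changes $|\xi|$ only by fixed multiplicative constants, so $\lambda^2+\langle\xi\rangle^2$ is preserved up to constants and the defining seminorms remain controlled, uniformly in $\lambda\geq1$. The Kuranishi/stationary-phase argument then produces a semiclassical asymptotic expansion of $\kappa^\ast b^w(\kappa^{-1})^\ast$ whose successive terms gain a factor $(\lambda^2+\langle\xi\rangle^2)^{-1/2}$, i.e.\ drop one unit of order; I would take this from \cite[Chapter~18]{Hoermander:V3} in its semiclassical version, as in \cite[Appendix~B and Appendices C9-C13]{LRLR:11}.

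For the adjoint bullet, the only difference with $\R^n$ is the Riemannian density: writing it as $\rho_j(x)\,dx$ in the chart $(U_j,\phi_j)$ with $\rho_j$ smooth and positive, the chart component of $\Op(a)^\ast$ is $\rho_j^{-1}(a_j^w)^\ast\rho_j=\rho_j^{-1}(\bar a_j)^w\rho_j$ up to the cutoffs, and $\rho_j^{-1}\#\bar a_j\#\rho_j=\bar a_j$ modulo $S^{m-1}_{sc}$; resumming over $j$ gives $\Op(a)^\ast-\Op(\bar a)\in\Psi^{m-1}_{sc}(M)$. For the composition, $\Op(a)\Op(\tilde a)=\sum_{j,k}A_j\tilde A_k$: the terms with $\supp\psi_j\cap\supp\tilde\psi_k=\emptyset$ are $O(\lambda^{-\infty})$ by Proposition~\ref{p:suppdisj}, while for the overlapping ones one passes to a common chart and uses the Weyl product $a_j\#b_k=a_jb_k+\frac1{2i}\{a_j,b_k\}+O(\text{order}-2)$; the leading terms recombine, through $\sum_j\psi_j=\sum_k\tilde\psi_k=1$, into $\Op(a\tilde a)$ modulo $\Psi^{m+\tilde m-1}_{sc}(M)$ and into $\Op(a\tilde a)+\frac1{2i}\Op(\{a,\tilde a\})$ modulo $\Psi^{m+\tilde m-2}_{sc}(M)$. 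Subtracting the two orders of composition, the symmetric terms cancel, leaving the commutator bullet, with $\{a,\tilde a\}$ intrinsically defined since the pullbacks in~\eqref{eq: pullback phasespace} are symplectic. The main obstacle is entirely organisational: one must check that the symbol class, the order bookkeeping (each ``error'' genuinely carrying a factor $(\lambda^2+\langle\xi\rangle^2)^{-1/2}$), and the $O(\lambda^{-\infty})$ remainders all survive conjugation by the transition maps and insertion of the partition-of-unity cutoffs, uniformly in $\lambda\geq1$ — which is why we prefer to refer to \cite{Hoermander:V3,LRLR:11} for the full details.
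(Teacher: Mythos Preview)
Your sketch is correct and follows the standard route (reduction to the Weyl calculus on $\R^n$ via local charts, invariance of $S^m_{sc}$ under the cotangent lift of transition maps, Kuranishi trick, handling of the Riemannian density for the adjoint, and disjoint-support remainders). Note however that the paper does \emph{not} give its own proof of this proposition: it simply states it and refers to \cite{Hoermander:V3} and \cite[Appendix~B and Appendices C9--C13]{LRLR:11} for detailed proofs. Your proposal is thus not to be compared against a proof in the paper but is rather a (reasonable) expansion of what those references contain.
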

We have the following version of the sharp G{\aa}rding inequality for the above quantization of symbols in $S^m_{sc}(T^*M)$, which can be easily deduced from that adapted to symbols in $S^m_{sc}(\R^{2n})$ (see~\cite[Theorem~18.1.14]{Hoermander:V3} or~\cite[Theorem~2.5.4]{Lernerbook}) by using local charts.
\begin{theorem}
Let $m \in \R$ and assume that $a \in S^{2m+1}_{sc}(T^*M)$ is real valued and satisfies $a \geq 0$ on $T^*M$. Then, there exist $C, \lambda_0>0$ such that for all $u \in \Cinf(M)$ and all $\lambda \geq \lambda_0$ we have
$$
\Re(\Op(a)  u , u)_{L^2(M)} \geq -C \norm{ u }_{H^{m}_{sc}(M)}^2 .
$$
\end{theorem}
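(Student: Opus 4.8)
The plan is to localise the inequality to the finite atlas $(U_j,\phi_j)_{j\in J}$ of $M$ and to reduce, chart by chart, to the Euclidean semiclassical sharp G{\aa}rding inequality (the semiclassical form of \cite[Theorem~18.1.14]{Hoermander:V3} or \cite[Theorem~2.5.4]{Lernerbook}): for a real symbol $c\in S^{2m+1}_{sc}(\R^{2n})$ with $c\ge0$, one has $\Re\poscal{c^wG}{G}_{L^2(\R^n)}\ge -C\norm{G}_{H^m_{sc}(\R^n)}^2$ for all $G\in\Cinfc(\R^n)$. Writing $\Op(a)=\sum_{j\in J}A_j$ with $A_ju=\phi_j^*\big(\tilde\psi_j\,a_j^w\,(\phi_j^{-1})^*(\psi_ju)\big)$ and $a_j=(\phi_j^{-1})^*a$, it is then enough to bound each $\Re\poscal{A_ju}{u}_{L^2(M)}$ from below by $-C\norm{u}_{H^m_{sc}(M)}^2$ and to sum over the finite set $J$.

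Fix $j$. Since $\tilde\psi_j\in\Cinfc(\tilde U_j)$, the function $A_ju$ is supported in $\phi_j^{-1}(\supp\tilde\psi_j)\Subset U_j$, so the pairing $\poscal{A_ju}{u}_{L^2(M)}$ only feels $u$ there; choosing $\chi_j\in\Cinfc(U_j)$ with $\chi_j\equiv1$ on $\supp\psi_j$ and with chart image $\tilde\chi_j:=(\phi_j^{-1})^*\chi_j$ supported where $\tilde\psi_j\equiv1$, we may freely replace $u$ by $\chi_ju$ and keep everything localised in $U_j$. Pushing the pairing to $\R^n$ through $\phi_j$ produces a Lebesgue integral over $\tilde U_j$ carrying the positive smooth Jacobian-times-density weight $m_j$ (extended, via a cutoff, to a positive function in $S^0_{sc}(\R^n)$ bounded below); using Proposition~\ref{p:suppdisj} (or its analogue for general orders) to discard the cutoff $\tilde\psi_j$ — legitimate because $\tilde\chi_j\equiv1$ on the support of $w_j:=(\phi_j^{-1})^*(\psi_ju)=\beta_j\tilde\chi_j(\phi_j^{-1})^*u$, with $\beta_j:=(\phi_j^{-1})^*\psi_j$, while $\supp\tilde\chi_j\cap\supp(1-\tilde\psi_j)=\emptyset$ — and moving $m_j$ symmetrically across the inner product via $m_j=m_j^{1/2}\cdot m_j^{1/2}$, one obtains
\begin{multline*}
\Re\poscal{A_ju}{u}_{L^2(M)}\\
=\Re\poscal{\big(m_j^{1/2}\,a_j^w\,\beta_j\,m_j^{-1/2}\big)G_j}{G_j}_{L^2(\R^n)}+O(\lambda^{-\infty})\,\norm{u}_{L^2(M)}^2,
\end{multline*}
where $G_j:=m_j^{1/2}\tilde\chi_j(\phi_j^{-1})^*u\in\Cinfc(\R^n)$ satisfies $\norm{G_j}_{H^m_{sc}(\R^n)}\lesssim\norm{u}_{H^m_{sc}(M)}$, by the (uniform in $\lambda$) coordinate invariance of the semiclassical Sobolev norms and the fact that $m_j^{1/2}$ and $\tilde\chi_j$ are uniform multipliers on $H^m_{sc}(\R^n)$.

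It remains to analyse the operator $m_j^{1/2}\,a_j^w\,\beta_j\,m_j^{-1/2}$: by the Weyl symbol calculus its full symbol is $\beta_ja_j+r_j$ with $r_j\in S^{2m}_{sc}(\R^{2n})$, and — since for the Weyl quantization the adjoint corresponds to the complex-conjugate symbol — its real part equals $(\beta_ja_j+\Re r_j)^w$, where $\Re r_j\in S^{2m}_{sc}(\R^{2n})$ contributes at most a further $C\norm{G_j}_{H^m_{sc}(\R^n)}^2$. Thus the claim reduces to $\Re\poscal{c_j^wG_j}{G_j}_{L^2(\R^n)}\ge -C\norm{G_j}_{H^m_{sc}(\R^n)}^2$ with $c_j:=\beta_ja_j=(\phi_j^{-1})^*(\psi_ja)$, which by the very definition of $S^{2m+1}_{sc}(T^*M)$ lies in $S^{2m+1}_{sc}(\R^{2n})$, is real, and is globally $\ge0$ because $\psi_j\ge0$ and $a\ge0$; this is exactly the Euclidean inequality recalled above, and summing over $J$ concludes. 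The main point — and the only genuine work — is the bookkeeping just sketched: the Riemannian density is absorbed by the symmetric splitting $m_j=m_j^{1/2}\cdot m_j^{1/2}$, the auxiliary cutoffs $\tilde\psi_j,\chi_j$ are handled by pseudolocality (Proposition~\ref{p:suppdisj}), and the symbol-calculus remainders are of order $2m$ and hence controlled by $C\norm{u}_{H^m_{sc}(M)}^2$; none of this is a real obstacle, which is why the statement on $M$ follows easily from its $\R^n$ counterpart. An equivalent route would be to first conjugate $\Op(a)$ with the elliptic operator $\Op\big((\val{\xi}_x^2+\lambda^2)^{m/2}\big)\in\Psi^m_{sc}(M)$, reducing to $m=0$, and then run the same chart argument.
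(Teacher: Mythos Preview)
Your proposal is correct and follows exactly the route the paper indicates: the paper does not give a proof but merely states that the inequality ``can be easily deduced from that adapted to symbols in $S^m_{sc}(\R^{2n})$ \dots\ by using local charts,'' and your argument is a careful implementation of precisely this reduction. The bookkeeping with the Jacobian weight $m_j^{1/2}$, the pseudolocality to discard $\tilde\psi_j$, and the symbol-calculus remainder in $S^{2m}_{sc}$ is the standard way to carry this out; the only point to watch is that replacing $u$ by $\chi_j u$ in the second slot of the pairing is not literally free but only up to an $O(\lambda^{-\infty})$ term (coming from the disjointness of $\supp(1-\tilde\chi_j)$ and $\supp\beta_j$ via pseudolocality), which you in effect acknowledge.
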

This last inequality can be equivalently rewritten as
$$
\Re(\Op(a)  u , u)_{L^2(M)} \geq -C \norm{u }_{H^m(M)}^2 -C \lambda^{2m}\norm{u }_{L^2(M)}^2.
$$
Note finally that the quantization formula defining $\Op(a) $ depends on the set of charts and of the partition of unity chosen. However, for $a \in S^m_{sc}(T^*M)$ one can show that its definition is intrinsic modulo $\Psi^{m-1}_{sc}(T^*M)$.
{
\subsection{Pseudodifferential operators on \texorpdfstring{$M = M' \times \T^{n''}$}{mboundary}}
The construction of pseudodifferential operators in the previous section is very general and does not take into account the particular product structure of the manifold $M=  M'\times \T^{n''}$. 
When taking into account the structure of $M =  M'\times \T^{n''}$ in the definition of the quantization $\Op(a)$, it is natural to choose product charts (and associated product partitions of unity): take $U_{kj} = W_k \times V_j$ where $\{W_k\}$ is an atlas of $\T^{n''}$ and $\{V_j\}$ an atlas of $M'$.
With such a choice of charts and partition of unity, we note that if $a_T \in \Cinf(T^* \T^{n''})$ and $a_{\Mp} \in \Cinf(T^* \Mp)$, then $a = a_T \otimes a_{\Mp}$ is quantified as
$$
\Op(a) = \Op(a_T) \Op(a_{\Mp}) = \Op(a_{\Mp}) \Op(a_T),
$$ 
where $\Op(a_T), \Op(a_{\Mp})$ denote the quantizations on $ \T^{n''}$ and $\Mp$ defined in local charts (relative to the charts $W_k$ and $V_j$ respectively, and associated partitions of unity). Similarly, it is convenient to take product charts and partitions of unity on $\T^{n''}$, i.e. take $\{W^\pm\}$ an atlas of $\T^1$ and choose each $W_k$ of the form $W^\pm \times\cdots \times W^\pm$. 
\subsection{Fourier multipliers on the torus \texorpdfstring{$\T^{n''}$}{s1}}
Given $u \in \Cinf(\T^{n''})$, we define by $\udl{u}$ is the unique $2\pi \Z^{n''}$-periodic function on $\R^{n''}$ coinciding with $u$ on $(0,2\pi]^{n''}$. We have in particular $\udl{u} \in \scrS'(\R^{n''})$. We denote by $\F(\udl{u})$ its Fourier Transform, where, for $v \in \scrS(\R^{n''})$ we use the following normalization of the Fourier transform:
$$
\F(v) (x)= \int v(x)e^{-i x \cdot \xi} dx , \quad \F^{-1}(v)(\xi) = \frac{1}{(2\pi)^{n''}}\int v(\xi)e^{i x \cdot \xi} d\xi  .
$$
Taking $\chi \in \Cinf(\R^{n''})$, we may define the Fourier multiplier $\chi(D)$ on $\Cinf(\T^{n''})$ by 
$$
(\chi(D) u)(x) = \F^{-1}\big(\chi(\xi) \F(\udl{u})(\xi) \big)(x) , \quad x \in \T^{n''} .
$$
Such Fourier multiplier are linked to the quantization $\Op(\chi)$ defined above on $\T^{n''}$ by the following proposition (for charts and partition of unity as in the previous section).
\begin{proposition}
\label{p:quantizqwtorus}
Take $\chi \in \Cinf(\R^{n''})$ with uniformly bounded derivatives. Then, we have
\begin{align*}
\chi(D/\lambda)  -  \Op(\chi(\xi/\lambda)) = O_{\calL(L^2(\T^{n''}))}(\lambda^{-1})
\end{align*}
\end{proposition}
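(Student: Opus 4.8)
The plan is to compare the two operators directly on the Fourier basis $e_k(x)=e^{ik\cdot x}$, $k\in\Z^{n''}$, on which $\chi(D/\lambda)$ acts diagonally by multiplication by $\chi(k/\lambda)$. The crucial structural remark is that, with the product charts used in the previous section, the chart maps of $\T^{n''}=\R^{n''}/2\pi\Z^{n''}$ are restrictions of the quotient projection, hence have differential equal to the identity in the natural coordinates; consequently, for each chart the pullback $a_j=(\phi_j^{-1})^*\big(\chi(\xi/\lambda)\big)$ of the position-independent symbol $\chi(\xi/\lambda)$ is again $\chi(\xi/\lambda)$, its Weyl quantization $a_j^w$ on $\R^{n''}$ is precisely the Fourier multiplier $\chi(D/\lambda)$ on $\R^{n''}$, and $\phi_j^*e_k=e_k$.

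First I would compute $\chi(D/\lambda)$ on $\R^{n''}$ applied to $\psi_j^\#\,e^{ik\cdot}$, where $\psi_j^\#=(\phi_j^{-1})^*\psi_j\in\Cinfc(\tilde U_j)$: on the Fourier side this multiplies $\widehat{\psi_j^\#}(\xi-k)$ by $\chi(\xi/\lambda)$, and a single first-order Taylor expansion
\[
\chi\Big(\frac{k+\eta}{\lambda}\Big)=\chi\Big(\frac{k}{\lambda}\Big)+\frac1\lambda\,\eta\cdot\int_0^1(\nabla\chi)\Big(\frac{k+s\eta}{\lambda}\Big)\,ds
\]
isolates the leading term $\chi(k/\lambda)\,\psi_j^\#\,e^{ik\cdot}$ plus a remainder of size $\lambda^{-1}$. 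Inserting this into the definition of $\Op$, and using $\tilde\psi_j\equiv1$ near $\supp\psi_j^\#$, $\phi_j^*e_k=e_k$ and $\sum_j\psi_j=1$, would give
\[
\Op\big(\chi(\xi/\lambda)\big)e_k=\chi(k/\lambda)\,e_k+\frac1\lambda\,R\,e_k,\qquad R\,e_k=\sum_j\phi_j^*\big(\tilde\psi_j\,e^{ik\cdot}h_{j,k}\big),
\]
with $h_{j,k}=\F^{-1}\big[\,\widehat{\psi_j^\#}(\eta)\,\eta\cdot\int_0^1(\nabla\chi)((k+s\eta)/\lambda)\,ds\,\big]$. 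Since $\chi(D/\lambda)e_k=\chi(k/\lambda)e_k$ and $\chi(D/\lambda)$ is manifestly bounded on $L^2(\T^{n''})$, it then remains to establish the uniform bound $\|Ru\|_{L^2(\T^{n''})}\lesssim\|u\|_{L^2(\T^{n''})}$ on $\Cinf(\T^{n''})$, which by density gives $\chi(D/\lambda)-\Op(\chi(\xi/\lambda))=\lambda^{-1}R=O_{\calL(L^2(\T^{n''}))}(\lambda^{-1})$.

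For that uniform bound I would fix $u=\sum_k\hat u_k e_k\in\Cinf(\T^{n''})$ and recognize that, for each $s\in[0,1]$ and $\eta\in\R^{n''}$, the quantity $\sum_k\hat u_k\,e^{ik\cdot}(\nabla\chi)((k+s\eta)/\lambda)$ is nothing but the Fourier multiplier on $\T^{n''}$ with the bounded symbol $k\mapsto(\nabla\chi)((k+s\eta)/\lambda)$ applied to $u$, hence has $L^2(\T^{n''})$-norm $\le\|\nabla\chi\|_{L^\infty}\|u\|_{L^2(\T^{n''})}$, uniformly in $s,\eta,\lambda$; multiplying by the compactly supported cutoff $\tilde\psi_j$, integrating in $\eta$ against the Schwartz weight $\widehat{\psi_j^\#}(\eta)\,\eta$ by Minkowski's integral inequality, and summing the finitely many chart contributions then yields $\|Ru\|_{L^2(\T^{n''})}\lesssim\|u\|_{L^2(\T^{n''})}$. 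I expect the only real work to be this bookkeeping: tracking the position-independence of the symbol through the chart pullbacks (so that $\Op(\chi(\xi/\lambda))$ collapses to $\chi(D/\lambda)$ up to the $\lambda^{-1}$ remainder) and the uniform $L^2$-control of the frequency-shifted multipliers $(\nabla\chi)((D+s\eta)/\lambda)$; there is no genuine analytic obstacle, and in fact only the boundedness of $\chi$ and of $\nabla\chi$ is used, while iterating the Taylor expansion would even produce a remainder $O(\lambda^{-\infty})$. (An alternative route is to realize $\chi(D/\lambda)$ on $\T^{n''}$ as the periodization, via the Poisson summation formula, of $\chi(D/\lambda)$ on $\R^{n''}$; but then one must instead control the off-diagonal part of the kernel $\lambda^{n''}(\F^{-1}\chi)(\lambda\,\cdot)$, which is more delicate when $\chi$ is merely smooth and bounded together with its derivatives.)
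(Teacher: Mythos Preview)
The paper states Proposition~\ref{p:quantizqwtorus} without proof, so there is nothing to compare against; your proposal supplies a proof where the paper gives none.

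Your argument is correct and is the natural one. The two structural observations you isolate are exactly the right ones: (i) with the product charts described just before the proposition, the chart diffeomorphisms have unit Jacobian, so the local representative $a_j=(\phi_j^{-1})^*\big(\chi(\xi/\lambda)\big)$ is again the $x$-independent symbol $\chi(\xi/\lambda)$, whose Weyl quantization on $\R^{n''}$ is the Euclidean Fourier multiplier $\chi(D/\lambda)$; and (ii) applying this multiplier to $\psi_j^\#\,e^{ik\cdot}$ and Taylor-expanding $\chi$ at $k/\lambda$ produces the leading diagonal action $\chi(k/\lambda)$ plus an explicit $\lambda^{-1}$ remainder. Summing over charts, the leading terms reassemble to $\chi(D/\lambda)$ thanks to $\sum_j\psi_j=1$ and $\tilde\psi_j\psi_j^\#=\psi_j^\#$.

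Two minor points worth tightening in a final write-up. First, the identity ``$\phi_j^*e_k=e_k$'' holds up to a unimodular constant if the chart involves a translation; this phase is harmless since it cancels between $\phi_j^*$ and $(\phi_j^{-1})^*$, but it is cleaner to track it (or to note that the chosen charts are sections of the quotient map so no phase appears). Second, in your remainder bound you implicitly use that $\supp\tilde\psi_j\subset\tilde U_j$ is contained in (a translate of) a fundamental domain, so that for a $2\pi\Z^{n''}$-periodic function $V$ one has $\|\tilde\psi_j V\|_{L^2(\R^{n''})}\le\|\tilde\psi_j\|_{L^\infty}\|V\|_{L^2(\T^{n''})}$; this is exactly what lets you pass from the $L^2(\R^{n''})$ estimate in the chart to an $L^2(\T^{n''})$ estimate and then invoke Parseval for the frequency-shifted multiplier $(\nabla\chi)((D+s\eta)/\lambda)$. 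With these two remarks made explicit, the proof is complete and, as you observe, iterating the Taylor expansion yields the sharper $O(\lambda^{-N})$ bound for every $N$.
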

This implies that 
\begin{equation}
\label{e:fourmult}
1 \otimes  \chi(D/\lambda) - \Op\big( 1 \otimes \chi(\xi/\lambda)\big) = O_{\calL(L^2(M))}(\lambda^{-1})
\end{equation}
Finally, this definition of Fourier multipliers is linked to Fourier series as follows. 
\begin{lemma}
\label{lem: pseudo circle}
Take $\chi \in \Cinf(\R^{n''})$ with uniformly bounded derivatives. Then, for any $u \in \Cinf(\T^{n''})$, we have
\begin{align*}
(\chi(D) u)(x) =  \sum_{k \in \Z^{n''}} \chi(k) \hat{u}_k e^{ik\cdot x} , \quad x \in \T^{n''} , \quad \hat{u}_k = \frac{1}{(2\pi)^{n''}} \int_{\T^{n''}} u(y) e^{-ik \cdot y} dy .
\end{align*}
 \end{lemma}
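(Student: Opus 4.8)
The plan is to compute $\chi(D)u$ directly in the sense of tempered distributions, using the chosen normalization of $\F$ and the fact that $u$, being smooth on $\T^{n''}$, has rapidly decreasing Fourier coefficients $(\hat{u}_k)_{k\in\Z^{n''}}$.

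First I would record the uniformly convergent Fourier expansion $u(x) = \sum_{k\in\Z^{n''}}\hat{u}_k e^{ik\cdot x}$, which gives $\udl{u} = \sum_{k\in\Z^{n''}}\hat{u}_k e^{ik\cdot x}$ both pointwise on $\R^{n''}$ and in $\scrS'(\R^{n''})$. Since $\F$ is continuous on $\scrS'(\R^{n''})$ and, with the normalization above, $\F(e^{ik\cdot x}) = (2\pi)^{n''}\delta_k$ (the Dirac mass at $\xi = k$), this yields
$$
\F(\udl{u}) = (2\pi)^{n''}\sum_{k\in\Z^{n''}}\hat{u}_k\,\delta_k \qquad \text{in }\scrS'(\R^{n''}).
$$

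Next I would use that $\chi \in \Cinf(\R^{n''})$ with uniformly bounded derivatives is in particular a smooth function of at most polynomial growth, hence a multiplier of $\scrS(\R^{n''})$, so that multiplication by $\chi$ acts continuously on $\scrS'(\R^{n''})$ and satisfies $\chi\,\delta_k = \chi(k)\,\delta_k$ for each $k$ by continuity of $\chi$ at $k$. This gives $\chi(\xi)\F(\udl{u})(\xi) = (2\pi)^{n''}\sum_{k\in\Z^{n''}}\hat{u}_k\,\chi(k)\,\delta_k$, the series still converging in $\scrS'(\R^{n''})$ since $(\hat{u}_k)_k$ decays rapidly and $\chi$ is bounded. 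Applying $\F^{-1}$, continuous on $\scrS'(\R^{n''})$, and using $\F^{-1}(\delta_k)(x) = (2\pi)^{-n''}e^{ik\cdot x}$, I would obtain
$$
(\chi(D)u)(x) = \F^{-1}\big(\chi(\xi)\F(\udl{u})(\xi)\big)(x) = \sum_{k\in\Z^{n''}}\hat{u}_k\,\chi(k)\,e^{ik\cdot x},
$$
and a final remark would note that, since $(\hat{u}_k\chi(k))_k$ is again rapidly decreasing, the series converges uniformly to a smooth $2\pi\Z^{n''}$-periodic function, which one restricts to $\T^{n''}$; this is exactly the asserted identity.

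The only point requiring any care — and the closest thing to an obstacle — is the bookkeeping of the $(2\pi)^{n''}$ factors arising from the normalization of $\F$ fixed above, and making sure every manipulation is carried out in $\scrS'(\R^{n''})$, where $\F$, $\F^{-1}$ and multiplication by $\chi$ are all continuous; there is no genuine analytic difficulty.
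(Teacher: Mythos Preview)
Your proof is correct and follows essentially the same approach as the paper's own proof: expand $\udl{u}$ in its Fourier series, take the distributional Fourier transform to obtain a sum of Dirac masses, multiply by $\chi$, and apply $\F^{-1}$. You provide a bit more justification (continuity in $\scrS'$, rapid decay of the coefficients) than the paper does, but the argument is the same.
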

\begin{proof}
We first write $u(x) = \sum_{k \in \Z^{n''}} \hat{u}_k e^{ik \cdot x}$ on $\T^{n''}$, so that $\udl{u}(x) = \sum_{k \in \Z^{n''}} \hat{u}_k e^{ik \cdot x}$ on $\R$. Hence, we obtain
\begin{align*}
 \F(\udl{u})(\xi)  =\F(\sum_{k \in \Z^{n''}}  \hat{u}_k e^{ik\cdot x})(\xi) = \sum_{k \in \Z^{n''}} \hat{u}_k ((2\pi)^{n''}\delta_{\xi = k}) .
\end{align*}
This implies 
\begin{align*}
\F^{-1}\big(\chi(\xi) \F(\udl{u})(\xi) \big)(x)  & =\F^{-1}\big(\chi(\xi)  \sum_{k \in \Z^{n''}} \hat{u}_k (2\pi)^{n''}\delta_{\xi = k} \big)(x) 
=\F^{-1}\big(  \sum_{k \in \Z^{n''}} \hat{u}_k \chi(k) (2\pi)^{n''} \delta_{\xi = k} \big)(x) \\
& =\sum_{k \in \Z^{n''}} \chi(k) \hat{u}_k e^{ik \cdot x} ,
\end{align*}
which concludes the proof of the lemma.
\end{proof}
}
\section{Sharpness of Estimate~\texorpdfstring{\eqref{eq:estimate Rd}}{23}: converse of Theorem~
\texorpdfstring{\!\!\ref{th: estimate R^d}}{21}
}
In this appendix, we prove that our Estimate~\eqref{eq:estimate Rd} on $\R^d$ is optimal.
\begin{lemma}
\label{lem:converseestimRd}
Let $Q_{0}$ be given by \eqref{qzero} with $W$ satisfying \eqref{eq: hyp V}. Let us assume that there exists
a positive constant $\mu_{0}$ such that
$$
\forall \mu\ge \mu_{0},\exists \beta(\mu)>0, \forall u\in \Con^2_c(\R^d),\quad
 \|(Q_0 - \mu) u \|_{L^2(\R^d)} \geq \beta(\mu)
 \mu^{\frac{\gamma}{2\gamma + 1}} 
\|u \|_{L^2(\R^d)}.
$$
Then $\limsup_{\mu\rightarrow +\io} \beta(\mu)<+\io$.
\end{lemma}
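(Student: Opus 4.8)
The plan is to establish optimality by a direct quasimode construction on $\R^d$, entirely analogous to the one carried out in Section~\ref{sec:quasimodes}: for each large $\mu$ I would produce $u_\mu \in \Con^2_c(\R^d)\setminus\{0\}$ with
\begin{equation*}
\|(Q_0-\mu)u_\mu\|_{L^2(\R^d)} \leq C\, \mu^{\frac{\gamma}{2\gamma+1}}\,\|u_\mu\|_{L^2(\R^d)},
\end{equation*}
where $C$ depends only on $d,\gamma,C_1$ (and the fixed cutoff below). Plugging this $u_\mu$ into the assumed lower bound and dividing by $\mu^{\frac{\gamma}{2\gamma+1}}\|u_\mu\|_{L^2(\R^d)}>0$ immediately gives $\beta(\mu)\le C$ for all $\mu\ge\mu_0$, hence $\limsup_{\mu\to+\infty}\beta(\mu)\le C<+\infty$, which is the claim.

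For the construction, I would fix $\chi\in\Cinfc(B_{\R^d}(0,1);\R)$ with $\chi\not\equiv 0$ and a unit vector $\omega\in\R^d$, write $\nu=\mu^{1/2}$, and for a length scale $h=h(\mu)\ge 1$ to be chosen set
\begin{equation*}
u_\mu(x)=h^{-d/2}\,\chi(x/h)\,e^{i\nu\,\omega\cdot x},
\end{equation*}
so that $\|u_\mu\|_{L^2(\R^d)}=\|\chi\|_{L^2(\R^d)}$ is independent of $\mu$ and $h$. A one-line computation (using $|\omega|=1$ and $\mu=\nu^2$) gives, with $v=h^{-d/2}\chi(\cdot/h)$,
\begin{equation*}
(-\Delta-\mu)\bigl(v\,e^{i\nu\omega\cdot x}\bigr)=\bigl(-\Delta v-2i\nu\,\omega\cdot\nabla v\bigr)e^{i\nu\omega\cdot x},
\end{equation*}
and after a change of variables
\begin{equation*}
\|(-\Delta-\mu)u_\mu\|_{L^2(\R^d)}\le h^{-2}\|\Delta\chi\|_{L^2(\R^d)}+2\nu h^{-1}\|\nabla\chi\|_{L^2(\R^d)}.
\end{equation*}
Since $\supp u_\mu\subset B(0,h)$ with $h\ge 1$, the upper bound in~\eqref{eq: hyp V} yields $W(x)\le C_1\langle x\rangle^{2\gamma}\le C_1 2^{\gamma}h^{2\gamma}$ on $\supp u_\mu$, hence $\|W u_\mu\|_{L^2(\R^d)}\le C_1 2^{\gamma}h^{2\gamma}\|\chi\|_{L^2(\R^d)}$. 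Combining the two bounds,
\begin{equation*}
\|(Q_0-\mu)u_\mu\|_{L^2(\R^d)}\le C\bigl(h^{-2}+\nu h^{-1}+h^{2\gamma}\bigr).
\end{equation*}

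The final step is the scale optimization, which is really the only substantive point: balancing the two $\mu$-growing error terms $\nu h^{-1}$ (kinetic error of the wave packet) and $h^{2\gamma}$ (size of the damping on the support) leads to $h=\nu^{1/(2\gamma+1)}$, which is $\ge 1$ as soon as $\mu\ge 1$. Then $\nu h^{-1}=h^{2\gamma}=\nu^{2\gamma/(2\gamma+1)}=\mu^{\gamma/(2\gamma+1)}$ and $h^{-2}=\nu^{-2/(2\gamma+1)}\le 1\le\mu^{\gamma/(2\gamma+1)}$, so $\|(Q_0-\mu)u_\mu\|_{L^2(\R^d)}\le C'\mu^{\gamma/(2\gamma+1)}=\tfrac{C'}{\|\chi\|_{L^2(\R^d)}}\mu^{\gamma/(2\gamma+1)}\|u_\mu\|_{L^2(\R^d)}$ for $\mu\ge 1$, and the argument of the first paragraph applies with $\mu_0$ replaced by $\max(\mu_0,1)$. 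There is no genuine obstacle here; the only mild bookkeeping issue is that the hypothesis~\eqref{eq: hyp V} controls $W$ from above by $\langle x\rangle^{2\gamma}$ rather than $|x|^{2\gamma}$, which is harmless because we localize on a ball of radius $h\ge 1$ where the two are comparable up to a constant. (For completeness one could also check, via the quantitative argument of Lemma~\ref{th: estimate R^d mu large}, that the power cannot be improved even by a logarithm, but the above already shows $\tfrac{\gamma}{2\gamma+1}$ is the exact exponent.)
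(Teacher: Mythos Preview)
Your proof is correct. The quasimode $u_\mu(x)=h^{-d/2}\chi(x/h)\,e^{i\nu\omega\cdot x}$ with $h=\nu^{1/(2\gamma+1)}$ saturates the estimate exactly as you claim, and the bookkeeping (the $\langle x\rangle^{2\gamma}$ versus $|x|^{2\gamma}$ issue, the $h\ge 1$ condition) is handled cleanly.

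The paper's argument is in fact the same construction dressed in the language of symplectic invariance of the Weyl calculus (formula~\eqref{segal}): it conjugates $Q_0-\mu$ by the metaplectic unitary associated with the affine symplectic map $x=\mu^{\kappa}y$, $\xi_1=\mu^{-\kappa}\eta_1+\mu^{1/2}$, $\xi'=\mu^{-\kappa}\eta'$ with $\kappa=1/(4\gamma+2)$, which is precisely composition of the dilation by $h=\mu^{\kappa}=\nu^{1/(2\gamma+1)}$ with multiplication by $e^{i\nu x_1}$---i.e.\ your ansatz with $\omega=e_1$. After this conjugation the paper obtains $\mu^{\gamma/(2\gamma+1)}c_\mu^w$ with $c_\mu^w$ uniformly bounded on a fixed test function, which is exactly your bound $\|(Q_0-\mu)u_\mu\|\lesssim\mu^{\gamma/(2\gamma+1)}$. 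Your presentation is the more elementary one: it avoids invoking the metaplectic representation and carries out the change of variables by hand, at the (negligible) cost of a few explicit lines of computation. The paper's packaging, on the other hand, makes the scaling choice $\kappa=1/(4\gamma+2)$ appear as the unique exponent balancing $\tfrac12-\kappa=2\kappa\gamma$, which is the symbol-level counterpart of your balance $\nu h^{-1}=h^{2\gamma}$.
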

\begin{proof}
 We consider the following affine symplectic mapping $(x,\xi)\mapsto (y,\eta)$
 of $\R^{2d}$:
 $$
\begin{cases}
 x=\mu^{\kappa} y,\\
 \xi_{1}=\mu^{-\kappa}\eta_{1}+\mu^{1/2},\quad \xi'=\mu^{-\kappa}\eta',
\end{cases}
 $$
 where $\kappa$ is a positive constant to be chosen later.
 The operator $Q_{0}-\mu$ has the Weyl symbol
 $
 \val \xi^{2}+iW(x)
 $
 and is thus unitarily equivalent to the operator $b^{w}$ with
 \begin{multline*}
b(y,\eta)=(\mu^{-\kappa}\eta_{1}+\mu^{1/2})^{2}+\mu^{-2\kappa}\val{\eta'}^{2}
 +W(\mu^{\kappa}y)-\mu
 \\=
 2\mu^{\frac12-\kappa}\eta_{1}
 +\mu^{-2\kappa}\val \eta^{2}+i\mu^{2\kappa\gamma}W(\mu^{\kappa}y)\mu^{-2\kappa\gamma}.
\end{multline*}
We choose now $\kappa$ so that $\frac12-\kappa=2\kappa\gamma$, \ie
$\kappa=1/(4\gamma+2)$ (entailing
$2\kappa\gamma=\gamma/(2\gamma+1)$)
and we obtain
$$
b(y,\eta)=\mu^{\frac{\gamma}{2\gamma+1}}c_{\mu}(y,\eta),\quad c_{\mu}(y,\eta)=\eta_{1}+i\underbrace{W(\mu^{\kappa}y)\mu^{-2\kappa\gamma}}_{\substack{\text{bounded from above} \\\text {and below by positive}\\\text{ constants}}
}
+{\val \eta}^{2}\mu^{-\kappa-\frac12}.
$$
Let $u\in \Con^\io_c(\R^d)$ with $L^{2}$ norm 1. From our assumption in the lemma, we find with some unitary $U_{\mu}$
$$
\norm{U_{\mu}^{*}c_{\mu}^{w} U_{\mu}u}_{L^{2}(\R^{d})}\ge \beta(\mu)\norm{u}_{L^{2}(\R^{d})}\Longrightarrow
\norm{c_{\mu}^{w} U_{\mu}u}_{L^{2}(\R^{d})}\ge \beta(\mu).
$$
Since the mapping $U_{\mu}$ is also an isomorphism of $\Con^\io_c(\R^d)$
(from the particular form of the symplectic mapping,  the mapping $U_{\mu}$ is the composition of a multiplication by a factor $e^{i\alpha x_{1}}$ with a rescaling
$v\mapsto v(\lambda x) \lambda^{d/2}$ ), we may choose
$$
u=U_{\mu}^{*}w_{0},\quad \text{$w_{0}\in \Con^\io_c(\R^d)$ with $L^{2}$ norm 1,}
$$
and we obtain
$\norm{c_{\mu}^{w} w_{0}}_{L^{2}(\R^{d})}\ge \beta(\mu)$, which implies 
\begin{multline*}
\limsup_{\mu\rightarrow+\io} \beta(\mu)
\le \limsup_{\mu\rightarrow+\io} \norm{c_{\mu}^{w} w_{0}}_{L^{2}(\R^{d})}\\\le \norm{D_{1}w_{0}}_{L^{2}(\R^{d})}
+C_{1} \norm{w_{0}}_{L^{2}(\R^{d})}+\limsup_{\mu\rightarrow+\io}\mu^{-\kappa-\frac12} \norm{\Delta w_{0}}_{L^{2}(\R^{d})}\\ =\norm{D_{1}w_{0}}_{L^{2}(\R^{d})}
+C_{1} \norm{w_{0}}_{L^{2}(\R^{d})}<+\io,
\end{multline*}
which gives the result.
\end{proof}
\small
\bibliographystyle{alpha}     
\bibliography{bibli}
\end{document}